\documentclass[10pt,a4paper]{amsart}
\usepackage[margin=1in]{geometry}
\usepackage{amsmath}
\usepackage{amsfonts}
\usepackage{amssymb}
\usepackage{graphicx}
\usepackage{amsthm}
\usepackage{enumitem}
\usepackage{amscd}
\usepackage{mathrsfs}
\usepackage{mathtools}
\usepackage{tikz-cd}
\usepackage{eucal}
\usepackage[colorlinks = true, linkcolor = blue, bookmarks=true]{hyperref}

\usepackage[
backend=biber,
style=alphabetic,
sorting=nyt,
maxbibnames=99,
abbreviate=false, 
isbn=false,
]{biblatex}
\addbibresource{bib.bib}

\author{Zhiyuan Chen}
\address{Department of Mathematics, Princeton University, Princeton, NJ, 08544-1000, USA}
\email{zc5426@princeton.edu}

\title[Finite generation and flatness]{Finite generation of higher rank quasi-monomial valuations via the extended Rees algebra}

\theoremstyle{plain}
\newtheorem{thm}{Theorem}[section]
\newtheorem*{thm*}{Theorem}

\newtheorem{lem}[thm]{Lemma}

\newtheorem{cor}[thm]{Corollary}

\theoremstyle{definition}
\newtheorem{defn}[thm]{Definition}

\newtheorem{exmp}[thm]{Example}
\newtheorem{rem}[thm]{Remark}

\let\originalleft\left
\let\originalright\right
\renewcommand{\left}{\mathopen{}\mathclose\bgroup\originalleft}
\renewcommand{\right}{\aftergroup\egroup\originalright}

\renewcommand{\subset}{\subseteq}

\linespread{1.2}

\setcounter{tocdepth}{1}

\begin{document}

\begin{abstract}
    In the algebraic theory of K-stability, one of the most challenging problems is to show the graded algebra associated with certain higher rank quasi-monomial valuations are finitely generated. In the global case of Fano varieties (\cite{LXZ_finite_generation}) and local case of klt singularities (\cite{XZ_stable_deg}), the finite generation has been proved for quasi-monomial valuations on models of qdlt Fano type, by the multi-step degeneration method. In this paper, we generalize these results using a different argument, by studying the extended Rees algebra via a more algebraic approach. As consequences, our results apply to fibrations of Fano type with singularities worse than qdlt, and graded algebras coming from the multi-section ring of arbitrary divisors. 
\end{abstract}

\maketitle

\tableofcontents

\section{Introduction}

Given a klt Fano pair $(X, \Delta)$ over a field $\Bbbk$ of characteristic zero, the algebraic theory of K-stability studies various invariants defined for real valuations $v$ on $X$ and the induced filtrations $\mathcal{F}_v$ on the section ring $R = R(X, -(K_X + \Delta))$. For example, we have the $\delta$-invariant $\delta(v) = \frac{A_{X,\Delta}(v)}{S(\mathcal{F}_v)}$, and the Fujita--Li criterion says $\inf_v \delta(v) \geq 1$ if and only if $(X, \Delta)$ is K-semistable. The infimum is in fact attained, and it is very important to understand the geometry of minimizing valuations for the $\delta$-invariant. In the case $\inf_v \delta(v) \leq 1$, a minimizing valuation $v$ induces an optimal destabilization $(X_0, \Delta_0)$ of $(X, \Delta)$, given by $X_0 = \mathop{\mathrm{Proj}}(\mathrm{gr}_v(R))$, where $\mathrm{gr}_v(R)$ is associated graded algebra of the filtration $\mathcal{F}_v$ on $R$. Here the most crucial and difficult step is to show that $\mathrm{gr}_v(R)$ is a finitely generated $\Bbbk$-algebra. The finite generation of $\mathrm{gr}_v(R)$ also completes the proofs of many other big theorems in the K-stability theory of Fano varieties, including the equivalence of K-stability and uniform K-stability, and the properness and projectivity of the K-moduli space. See \cite{LXZ_finite_generation} and \cite{Xu_K-Stability_Book} for the aforementioned results. 

In parallel with the global theory, the finite generation problem also appears in the local K-stability theory for klt singularities. For a klt singularity $x \in (X = \mathop{\mathrm{Spec}}(A), \Delta)$, the minimizing valuation $v$ for the normalized volume $\widehat{\mathrm{vol}}_{X,\Delta}$ induces a degeneration of $(X, \Delta)$ to a K-semistable Fano cone $(X_0, \Delta_0)$, where $X_0 = \mathop{\mathrm{Spec}}(\mathrm{gr}_v(A))$. Similar to the global case, the crucial step is to show that $\mathrm{gr}_v(A)$ is a finitely generated algebra; see \cite{XZ_stable_deg} and \cite{Zhuang_survey}. More recently, a similar finite generation problem for Fano fibrations is asked in \cite[Conj.\ 6.4]{SZ_KR_shrinker}. 

\subsection{The finite generation problem} The strategy to prove finite generation for a minimizing valuation $v$ can be roughly described as three steps: 
\begin{enumerate}[label=(\arabic*), nosep]
    \item Show that $v$ is quasi-monomial, and is an lc place of a $\mathbb{Q}$-complement. 
    \item Show that in addition $v$ satisfies a ``special'' geometric condition among all lc places. 
    \item Show that the ``special'' geometric condition implies finite generation. 
\end{enumerate}
The first step is settled by \cite{Xu_qm} (see also \cite[Thm.\ A.2]{BLX_openness}), and provides a natural setup to study the finite generation problem using techniques from the Minimal Model Program (MMP). Indeed, if $v$ is moreover divisorial, then the graded ring $\mathrm{gr}_v(R)$ is finitely generated by \cite[Cor.\ 1.4.3]{BCHM}. But, if $v$ has rational rank $>1$, this condition doesn't imply finite generation; see \cite[Thm.\ 6.1]{LXZ_finite_generation}. 

A special condition that works for (2) and (3) was first found in \cite{LXZ_finite_generation}. It is relatively easy to get (2), but the proof of (3) is convoluted, and uses certain boundedness results for Fano varieties. Later, a different approach for the finite generation is developed in \cite{Xu_towards} and completed in \cite{XZ_stable_deg}, under the same special condition, but avoids the boundedness and works in both the local case for klt singularities and the global case for Fano varieties. The special condition is that $v$ is a \emph{monomial lc place of a special $\mathbb{Q}$-complement} with respect to some log resolution, and can be reduced to the following using the MMP: There is a birational model $f \colon Y \to (X, \Delta)$, and an effective $\mathbb{Q}$-divisor $D \geq f^{-1}_{*}\Delta$ on $Y$ such that $(Y, D)$ is dlt, $-(K_Y + D)$ is ample, and $v \in \mathrm{QM}^{\circ}(Y, E)$ for $E = \lfloor D \rfloor$; we call $(Y, E)$ a model of dlt Fano type. 

In this paper, we focus on (3), and generalize the finite generation results in \cite{LXZ_finite_generation} and \cite{XZ_stable_deg} via a new approach that is of more algebraic nature. The main result is the following: 

\begin{thm}[Theorem \ref{finite_generation_thm}] \label{main_thm_1}
    Let $\Bbbk$ be a field of characteristic $0$, and $S = \mathop{\mathrm{Spec}}(A)$ be an affine scheme essentially of finite type over $\Bbbk$. Let $(Y, D)$ be an lc pair, and $g \colon Y \to S$ be a proper morphism such that $-(K_Y + D)$ is $g$-ample. Suppose $E_1, \ldots, E_r$ are reduced divisors on $Y$ with $E = \sum_{i = 1}^{r} E_i = \lfloor D \rfloor$, such that every lc center of $(Y, D)$ is contained in $\mathrm{Supp}(E)$, and each $E_i$ is $\mathbb{Q}$-Cartier at the lc centers\footnote{When making this assumption, we always exclude the generic points of $Y$, though they are sometimes also considered to be lc centers. }. Let $L_1, \ldots, L_s$ be $\mathbb{Q}$-Cartier $\mathbb{Z}$-divisors on $Y$, and
    \begin{equation*}
        R = R(L_1, \ldots, L_s) = \bigoplus_{n = (n_1, \ldots, n_s) \in \mathbb{N}^s}H^0(Y, \mathscr{O}_Y(n_1L_1 + \cdots + n_sL_s))
    \end{equation*}
    be the multi-section ring. Assume that $Z = \bigcap_{i=1}^{r} E_i$ is irreducible with the generic point $\zeta \in Z$. Suppose $v \in \mathrm{QM}^{\circ}_{\zeta}(Y, E)$ (see Definition \ref{def_simple_toroidal_QM}), and $\mathcal{F}_{v}$ is the filtration on $R$ induced by $v$, with the associated graded algebra $\mathrm{gr}_{v}(R)$. Then $\mathrm{gr}_{v}(R)$ is of finite type over $A$. 
\end{thm}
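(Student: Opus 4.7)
The plan is to encode the divisorial filtrations $\mathrm{ord}_{E_1}, \ldots, \mathrm{ord}_{E_r}$ into a single $\mathbb{Z}^r$-graded extended Rees algebra over $R$, prove its finite generation over $A[t_1, \ldots, t_r]$, and then recover $\mathrm{gr}_v(R)$ by specializing to $t_1 = \cdots = t_r = 0$ and re-grading via the weights of $v$.

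For $n = (n_1, \ldots, n_r) \in \mathbb{Z}^r$ set $\mathcal{F}^{n}R := \bigcap_{i : n_i > 0}\{f \in R : \mathrm{ord}_{E_i}(f) \geq n_i\}$, and form the extended multi-Rees algebra
\[
    \widetilde{\mathcal{R}} := \bigoplus_{n \in \mathbb{Z}^r} \mathcal{F}^{n} R \cdot \mathbf{t}^{-n} \ \subset\  R[t_1^{\pm 1}, \ldots, t_r^{\pm 1}],
\]
viewed as an $A[t_1, \ldots, t_r]$-algebra in which each $t_i$ is a regular element. Because every $E_i$ is $\mathbb{Q}$-Cartier near each lc center (in particular near $Z = \bigcap_i E_i$), the reflexive sheaves $\mathscr{O}_Y(\sum_j m_j L_j - \sum_i n_i E_i)$ are well-defined, and one identifies the $(n, m)$-piece of $\widetilde{\mathcal{R}}$ with $H^0(Y, \mathscr{O}_Y(\sum_j m_j L_j - \sum_i n_i E_i))$.

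The core technical step is to show that $\widetilde{\mathcal{R}}$ is finitely generated as an $A[t_1, \ldots, t_r]$-algebra. Since $-(K_Y+D)$ is $g$-ample and each $E_i \leq \lfloor D \rfloor$ is $\mathbb{Q}$-Cartier near the lc locus, a small rational perturbation $D - \sum_i \epsilon_i E_i$ remains lc with $g$-ample anti-log-canonical divisor, so the corresponding single-graded section ring is finitely generated by a relative log-Fano-type finite generation (a consequence of BCHM). Choosing perturbation directions $\epsilon$ along rational rays spanning $\mathbb{Q}^r$, uniformly controlling the sheaves $\mathscr{O}_Y(-\sum_i n_i E_i)$, and using the $\mathbb{Q}$-Cartier structure to pass from $\mathbb{N}^r$ to $\mathbb{Z}^r$ should combine these into finite generation of all of $\widetilde{\mathcal{R}}$. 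Granted this, the quotient $\widetilde{\mathcal{R}}/(t_1, \ldots, t_r)$ is the $\mathbb{Z}^r$-graded associated graded $\mathrm{gr}^{\mathbb{Z}^r}(R) := \bigoplus_{n \in \mathbb{Z}^r_{\geq 0}} \mathcal{F}^{n} R/\sum_i \mathcal{F}^{n+e_i} R$, which is therefore finitely generated over $A$.

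Finally, for $v \in \mathrm{QM}^{\circ}_\zeta(Y, E)$ with weights $a = (a_1, \ldots, a_r) \in \mathbb{R}_{>0}^r$, the local monomial description at $\zeta$ yields $\mathcal{F}^{\lambda}_v R = \sum_{n \in \mathbb{Z}^r_{\geq 0},\ \langle a, n \rangle \geq \lambda} \mathcal{F}^{n} R$ for every $\lambda \in \mathbb{R}$, so $\mathrm{gr}_v(R)$ is obtained from $\mathrm{gr}^{\mathbb{Z}^r}(R)$ by the grading coarsening $\mathbb{Z}^r_{\geq 0} \to \mathbb{R}_{\geq 0},\ n \mapsto \langle a, n \rangle$. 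Regrading preserves the underlying $A$-algebra, so $\mathrm{gr}_v(R)$ is finitely generated over $A$. The principal obstacle is the multi-graded finite generation of $\widetilde{\mathcal{R}}$: it demands a uniform-in-$n$ upgrade of the log-Fano finite generation theorem, whose proof relies crucially on the $\mathbb{Q}$-Cartier hypothesis at lc centers both to allow the boundary-perturbation argument and to ensure the shifted sheaves $\mathscr{O}_Y(-\sum_i n_i E_i)$ are well-behaved near $Z$. Compared with \cite{LXZ_finite_generation, XZ_stable_deg}, this approach replaces the multi-step degeneration by a single flat degeneration over $A[t_1, \ldots, t_r]$ encoded in $\widetilde{\mathcal{R}}$.
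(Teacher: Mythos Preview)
Your overall architecture---form the extended multi-Rees algebra $\widetilde{\mathcal{R}}$, show it is finitely generated, pass to $\widetilde{\mathcal{R}}/(t_1,\ldots,t_r)$, and then re-grade by the weight vector $a$---matches the paper's. However, two of your steps hide the real work.

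The serious gap is your claim that ``the local monomial description at $\zeta$ yields $\mathcal{F}^{\lambda}_v R = \sum_{\langle a, n \rangle \geq \lambda} \mathcal{F}^{n} R$''. This identity is true at the level of sheaves on $Y$ (this is Lemma~\ref{valuative_ideal_of_qm}): one has $\mathcal{F}_v^{\lambda}\mathscr{O}_Y(L)=\sum_{\langle a,n\rangle\ge\lambda}\mathscr{O}_Y(L-\sum_i n_iE_i)$. But $H^0(Y,-)$ does not commute with sums of subsheaves, so one only gets the inclusion $\sum_{\langle a,n\rangle\ge\lambda}\mathcal{F}^nR\subseteq\mathcal{F}_v^{\lambda}R$, not equality. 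Without equality your last step collapses: the natural map $\mathrm{gr}^{\mathbb{Z}^r}(R)\to\mathrm{gr}_v(R)$ is neither injective nor surjective in general, so finite generation of the former does not transfer. In the paper this equality is exactly Lemma~\ref{integral_central_fiber_is_valuative}, and its proof uses that $\widetilde{\mathcal{R}}$ is \emph{flat} over $\Bbbk[t_1,\ldots,t_r]$ and that $\widetilde{\mathcal{R}}/(t_1,\ldots,t_r)$ is an \emph{integral domain}, together with an auxiliary $g$-ample divisor to force the sheaf-level identity down to global sections in high enough degree.

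Relatedly, you have misidentified the ``principal obstacle''. Finite generation of $\widetilde{\mathcal{R}}$ over $A[t_1,\ldots,t_r]$ is the easy part: since $Y$ is of klt Fano type over $S$, this follows directly from \cite{BCHM} by realizing $\widetilde{\mathcal{R}}$ as (a quotient of) a Cox ring, with no perturbation argument needed (Theorem~\ref{the_central_fiber_of_Rees_alg}(1)). The hard parts are the flatness of $\widetilde{\mathcal{R}}$ and the integrality of its central fiber. These are established in Theorem~\ref{the_central_fiber_of_Rees_alg}(2)(3) by running an $L$-MMP to a semi-ample model for each relevant divisor $L$, applying Kawamata--Viehweg vanishing there to kill the higher Koszul homology (Lemmas~\ref{pass_to_sa_model} and~\ref{Koszul_exact_on_sections}), and then invoking the flatness criterion of Lemma~\ref{flatness_criterion_of_Rees}. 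Your proposal contains none of this, and without it the argument does not close.
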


\begin{rem}
    (1) The ring $R$ itself is finitely generated by \cite[Cor.\ 1.1.9]{BCHM}. 

    (2) If $(X, \Delta)$ is a klt Fano pair over $\Bbbk$ and $f \colon (Y, E) \to (X, \Delta)$ is a model of dlt Fano type, then our theorem applies for $S = \mathop{\mathrm{Spec}}(\Bbbk)$ and $L = f^{*}(-m(K_X + \Delta))$, where $m \in \mathbb{Z}_{>0}$ such that $-m(K_X + \Delta)$ is Cartier, and recovers the finite generation in \cite{LXZ_finite_generation}. If $x \in (X = \mathop{\mathrm{Spec}}(A), \Delta)$ is a klt singularity and $f \colon (Y, E) \to (X, \Delta)$ is a model of dlt Fano type, then our theorem for $S = X$ and $L = 0$ recovers \cite{XZ_stable_deg}. 

    (3) In comparison with \cite{XZ_stable_deg}, where $X$ is of primary interest and $Y$ is adapted to the valuation $v$, we consider the section rings of divisors on $Y$ directly. This viewpoint allows us to deal with arbitrary divisors and their multi-section ring. 
\end{rem}

Our Theorem \ref{main_thm_1} applies to more valuations than the special valuations in \cite{LXZ_finite_generation} and \cite{XZ_stable_deg}. For a klt singularity $x \in (X, \Delta)$, if $v = \mathrm{ord}_E$ is a divisorial valuation over $X$ centered at $x$ and is an lc place of a $\mathbb{Q}$-complement, then there exists a projective birational morphism $f \colon Y \to X$ such that $E \subset Y$ is the unique exceptional divisor and $-E$ is $f$-ample by \cite[Cor.\ 1.4.3]{BCHM}. Then $(Y, f^{-1}_{*}\Delta + E)$ satisfies the conditions in Theorem \ref{main_thm_1} (the finite generation in this case is already known by \cite{BCHM}), but $v = \mathrm{ord}_E$ satisfies the special condition in \cite{XZ_stable_deg} if and only if $E$ is a \emph{Koll\'ar component}, that is, the pair $(Y, f^{-1}_{*}\Delta + E)$ is plt. 

In the global case, we also give an example of a valuation of rational rank $2$ on a Fano variety which satisfies conditions in Theorem \ref{main_thm_1}, but not the special condition in \cite{LXZ_finite_generation}. See Example \ref{example_weakly_special}.

\subsection{The extended Rees algebra} Let $(X = \mathop{\mathrm{Spec}}(A), \Delta)$ is a klt pair, and $f \colon (Y, E = \sum_{i=1}^{r} E_i) \to X$ be a birational model extracting lc places $E_i$ of a $\mathbb{Q}$-complement. A key observation from \cite{Xu_towards} is that if $\mathrm{gr}_v(A)$ is finitely generated for $v \in \mathrm{QM}^{\circ}(Y, E)$, then $X_0 = \mathop{\mathrm{Spec}}(\mathrm{gr}_v(A))$ is a multi-step degeneration of $X$ induced by the divisors $E_1, \ldots, E_r$; conversely, if such a degeneration $X_0$ is integral, then $\mathrm{gr}_v(A)$ is finitely generated $X_0 = \mathop{\mathrm{Spec}}(\mathrm{gr}_v(A))$. If $(Y, E)$ is a model of dlt Fano type, it is proved in \cite{XZ_stable_deg} that the total family $\mathcal{X} \to \mathbb{A}^r$ of the multi-step degeneration is a family of klt pairs, hence the central fiber $X_0$ is integral. 

Algebraically, the family is $\mathcal{X} = \mathop{\mathrm{Spec}}(\mathcal{R})$, where $\mathcal{R}$ is the extended Rees algebra for the filtrations on $A$ induced by $E_i$, which is naturally an algebra over $\Bbbk[t_1, \ldots, t_r]$. Under the conditions in Theorem \ref{main_thm_1}, we consider the extended Rees algebra $\mathcal{R}$ for the filtrations on $R$ induced by $E_i$. We show that if $\mathcal{R}$ is flat over $\Bbbk[t_1, \ldots, t_r]$ and $\overline{\mathcal{R}} = \mathcal{R}/(t_1, \ldots, t_r)\mathcal{R}$ is an integral domain, then $\mathrm{gr}_v(R) \simeq \overline{\mathcal{R}}$. Note that $\mathcal{R}$ is finitely generated by \cite{BCHM}, hence we deduce Theorem \ref{main_thm_1} from the following: 

\begin{thm}[Theorem \ref{the_central_fiber_of_Rees_alg}] \label{main_thm_2}
    Let $\Bbbk$ be a field of characteristic $0$, and $S = \mathop{\mathrm{Spec}}(A)$ be an affine scheme essentially of finite type over $\Bbbk$. Let $(Y, \Delta)$ be an lc pair, and $g \colon Y \to S$ be a proper morphism such that $-(K_Y + \Delta)$ is $g$-ample. Suppose $E_1, \ldots, E_r$ are reduced divisors on $Y$ with $E = \sum_{i = 1}^{r} E_i = \lfloor \Delta \rfloor$, such that every lc center of $(Y, \Delta)$ is contained in $\mathrm{Supp}(E)$, and each $E_i$ is $\mathbb{Q}$-Cartier at the lc centers. For each $m = (m_1, \ldots, m_r) \in \mathbb{Z}^r$, write 
    \begin{equation*}
        E(m) \coloneqq \sum_{i=1}^{r} \max(m_i, 0) E_i. 
    \end{equation*}
    Let $L_1, \ldots, L_s$ be $\mathbb{Q}$-Cartier $\mathbb{Z}$-divisors on $Y$, 
    \begin{equation*}
        R = R(L_1, \ldots, L_s) = \bigoplus_{n = (n_1, \ldots, n_s) \in \mathbb{N}^s}H^0(Y, \mathscr{O}_Y(n_1L_1 + \cdots + n_sL_s))
    \end{equation*}
    be the multi-section ring, and 
    \begin{equation*}
        \mathcal{R} = \bigoplus_{n \in \mathbb{N}^s} \bigoplus_{m \in \mathbb{Z}^r} H^0(Y, \mathscr{O}_Y(n_1L_1 + \cdots + n_sL_s - E(m))) t_1^{-m_1} \cdots t_r^{-m_r} \subset R[t_1^{\pm 1}, \ldots, t_r^{\pm r}]
    \end{equation*}
    be the extended Rees algebra. Then the following hold: 
    \begin{enumerate}[label=\emph{(\arabic*)}, nosep]
        \item $\mathcal{R}$ is of finite type over $A$. 
        \item $\mathcal{R}$ is flat over $\Bbbk[t_1, \ldots, t_r]$. 
        \item Let $Z = \bigcap_{i = 1}^{r} E_i$, with the total ring of fractions $K(Z)$. Then there is an injective ring map 
        \begin{equation*}
            \mathcal{R}/(t_1, \ldots, t_r)\mathcal{R} \hookrightarrow K(Z)[\mathbb{N}^s \times \mathbb{N}^r]. 
        \end{equation*}
        In particular, if $Z$ is irreducible, then $\mathcal{R}/(t_1, \ldots, t_r)\mathcal{R}$ is an integral domain. 
    \end{enumerate}
\end{thm}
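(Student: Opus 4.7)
The plan is to handle the three claims in the order stated. For (1), I would first observe that $(Y, \Delta - \varepsilon E)$ is klt with $-(K_Y + \Delta - \varepsilon E)$ still $g$-ample for $0 < \varepsilon \ll 1$, so \cite[Cor.~1.1.9]{BCHM} yields finite generation over $A$ of the multi-section ring
\[
\mathcal{R}^{+} := \bigoplus_{(n,m) \in \mathbb{N}^{s+r}} H^{0}\bigl(Y,\, n \cdot L - m \cdot E\bigr)
\]
of the $\mathbb{Q}$-divisors $L_{1}, \ldots, L_{s}, -E_{1}, \ldots, -E_{r}$. Writing any $(n,\ell)$-homogeneous element of $\mathcal{R}$ as $f \cdot t_{1}^{\ell_{1}^{-}} \cdots t_{r}^{\ell_{r}^{-}}$ with $f \in H^{0}(nL - E(\ell^{+})) \subset \mathcal{R}^{+}$, where $\ell = \ell^{+} - \ell^{-}$ is the componentwise decomposition into non-negative parts, one sees that $\mathcal{R}$ is generated over $A$ by $\mathcal{R}^{+}$ and $t_1, \ldots, t_r$, which proves (1).

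For (2), the goal is to prove that $t_1, \ldots, t_r$ form a regular sequence on $\mathcal{R}$, which gives flatness over $\Bbbk[t_1, \ldots, t_r]$. Since $\mathcal{R}$ embeds in the domain $R[t_1^{\pm}, \ldots, t_r^{\pm}]$, the element $t_1$ is a non-zero divisor. For the inductive step, from $t_i x = \sum_{j<i} t_j y_j$ and matching $(n, \ell - e_i)$-graded pieces one obtains
\[
f_{\ell} = \textstyle\sum_{j<i} g_{j}, \quad f_{\ell} \in H^{0}(nL - E(\ell)), \quad g_{j} \in H^{0}(nL - E(\ell - e_i + e_j)),
\]
and one must refactor this as $f_{\ell} = \sum_{j<i} h_{j}$ with $h_{j} \in H^{0}(nL - E(\ell + e_j))$. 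For $i = 2$ a direct $\max$-function identity on divisor coefficients gives
\[
H^{0}(nL - E(\ell)) \cap H^{0}(nL - E(\ell + e_1 - e_2)) = H^{0}(nL - E(\ell + e_1)),
\]
and the conclusion follows. For $i \geq 3$ only the sum, not the individual $g_j$, has the requisite vanishing on $E_i$, and the refactorization requires lifting a leading-coefficient cancellation on $E_i$ from the local picture at $\zeta$ to a global statement on $Y$. The main obstacle is precisely this globalization: I would invoke Koll\'ar--Ambro--Fujino-type vanishing for the lc pair $(Y, \Delta)$, valid thanks to the $g$-ampleness of $-(K_Y+\Delta)$ and the $\mathbb{Q}$-Cartier hypothesis, to obtain the surjectivity of restriction maps $H^{0}(Y, \cdot) \twoheadrightarrow H^{0}(E_{i}, \cdot)$ and vanishing of the $H^{1}$ groups that control the lifting obstruction.

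For (3), I would construct a leading-coefficient map. First observe that $\mathcal{R}/(t_1,\ldots,t_r)\mathcal{R}$ is concentrated in degrees $(n, \ell) \in \mathbb{N}^{s} \times \mathbb{N}^{r}$ (components with some $\ell_i < 0$ are killed by $t_i$), with graded piece $H^{0}(nL - E(\ell)) / \sum_{i} H^{0}(nL - E(\ell + e_{i}))$. At the generic point $\zeta \in Z$, the $\mathbb{Q}$-Cartier hypothesis provides some $N$ with $NE_i$ Cartier in a Zariski neighborhood of $\zeta$ and local equations $u_i$; for $f \in H^{0}(nL - E(\ell))$, the rational section $f^{N}/\prod u_{i}^{\ell_{i}}$ has no pole along $Z$, and restricting it to $Z$ and then extracting $N$-th roots on the index-one cyclic cover (followed by Galois descent) yields the desired ring map $\mathcal{R}/(t)\mathcal{R} \to K(Z)[\mathbb{N}^{s} \times \mathbb{N}^{r}]$. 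Injectivity is again a local-to-global question: if the image of $[f]$ vanishes, then locally $\tilde{f}$ lies in $\mathfrak{m}_{\zeta} = \sqrt{(u_1, \ldots, u_r)}$, so $f \in \sum_i(u_i) \cdot \mathscr{O}_{Y,\zeta}$, and the same Koll\'ar-type vanishing as in (2) lifts this to the global statement $f \in \sum_{i} H^{0}(nL - E(\ell + e_i))$. Irreducibility of $Z$ makes $K(Z)$ a field and $K(Z)[\mathbb{N}^{s} \times \mathbb{N}^{r}]$ a monoid domain, so $\mathcal{R}/(t_1,\ldots,t_r)\mathcal{R}$ is an integral domain.
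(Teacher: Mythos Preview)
Your overall architecture matches the paper: reduce flatness to Koszul exactness (equivalently, $t_1,\ldots,t_r$ being a regular sequence), and build the map in (3) as a leading-coefficient restriction to the generic points of $Z$. The genuine gap is the vanishing step you invoke in both (2) and (3). You write that Koll\'ar--Ambro--Fujino-type vanishing applies to $(Y,\Delta)$ ``thanks to the $g$-ampleness of $-(K_Y+\Delta)$,'' but the sheaves whose $H^1$ you need to kill are $\mathscr{O}_Y(L-F)$ with $L = n_1L_1+\cdots+n_sL_s - E(m)$ an \emph{arbitrary} $\mathbb{Q}$-Cartier $\mathbb{Z}$-divisor and $F$ a partial sum of the $E_i$. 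Writing $L-F \sim_{\mathbb{Q}} K_Y + (\Delta - F) + \bigl(-(K_Y+\Delta)+L\bigr)$, the last term is ample plus $L$, and there is no reason for it to be nef, so neither Kawamata--Viehweg nor the lc injectivity/vanishing package applies on $Y$. The paper explicitly flags this: ``one shouldn't expect the vanishing holds on $Y$.'' Its remedy (Lemma~\ref{pass_to_sa_model}) is, for each fixed $L$ that is $g$-pseudo-effective, to run an $L$-MMP over $S$ and take a small $\mathbb{Q}$-factorialization to obtain $\phi\colon Y\dashrightarrow Y'$ with $L'=\phi_*L$ semi-ample; then $L'-E'\sim_{\mathbb{Q}} K_{Y'}+(\text{klt})+(\text{ample})$ and Kawamata--Viehweg gives $H^q(Y',\mathscr{O}_{Y'}(L'-F'))=0$. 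One checks $H^0(Y,\mathscr{O}_Y(L-F))\simeq H^0(Y',\mathscr{O}_{Y'}(L'-F'))$ (Lemma~\ref{compare_sections}), so the Koszul complex on sections is unchanged, and then a spectral sequence plus sheaf-level Koszul exactness (Lemma~\ref{Koszul_complex_div}) gives $H_p(C_\bullet)=0$ for $p>0$. The same passage to $Y'$ is what makes the injectivity in (3) work: on $Y'$ the cokernel sheaf is Cohen--Macaulay supported on $Z'$ (Lemma~\ref{res_of_divisor_to_center}), so restriction to generic points is injective. Without the MMP step, your lifting argument in (2) and the local-to-global step in (3) are unsupported.

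A second, smaller point: your argument for (1) and your perturbation $(Y,\Delta-\varepsilon E)$ tacitly assume each $E_i$ is globally $\mathbb{Q}$-Cartier, whereas the hypothesis only guarantees this at the lc centers. The paper first passes to a small $\mathbb{Q}$-factorial modification (via Lemma~\ref{adjust_coeff_of_Fano_type_model} and \cite[Lem.~4.7]{Zhuang_boundednessI}) to arrange $E_i$ $\mathbb{Q}$-Cartier everywhere; this does not change $\mathcal{R}$ by Lemma~\ref{compare_sections}. After that reduction your sketch of (1) is essentially the paper's.
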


In the multi-step degeneration of \cite{Xu_towards} and \cite{XZ_stable_deg}, the flatness of $\mathcal{R}$ over $\Bbbk[t_1, \ldots, t_r]$ is implicit, and only follows from the conclusion that $\mathcal{X} \to \mathbb{A}^r$ is a family of klt pairs. Instead, we will take a more algebraic approach to the flatness. This allows us to consider general divisors $L_1, \ldots, L_s$ on $Y$ and their multi-section ring, for which the geometry of $\mathop{\mathrm{Spec}}(\mathcal{R}) \to \mathbb{A}^r$ is more complicated.

\begin{proof}[Sketch of the proof of Theorem \ref{main_thm_2}]
    By the local criterion of flatness (and the grading), we need to show 
\begin{equation*}
    \mathrm{Tor}_1^{\Bbbk[t_1,\ldots,t_r]}(\Bbbk, \mathcal{R}) = 0. 
\end{equation*}
Then we can use the Koszul complex to resolve $\Bbbk = \Bbbk[t_1, \ldots, t_r]/(t_1, \ldots, t_r)$, so it suffices to show 
\begin{equation*}
    H_p(K_{\bullet}(\mathcal{R}; t_1, \ldots, t_r)) = 0
\end{equation*}
for all $p > 0$. Note that multiplication by $t_i$ on $\mathcal{R}$ corresponding to the inclusion 
\begin{equation*}
    H^0(Y, \mathscr{O}_Y(L - E_i)) \to H^0(Y, \mathscr{O}_Y(L))
\end{equation*}
for each $L = n_1L_1 + \cdots + n_sL_s - E(m)$. Thus, the summand of $K_{\bullet}(\mathcal{R}; t_1, \ldots, t_r)$ in each degree is a complex $C_{\bullet}$ of the form 
\begin{equation*}
    0 \to H^0(Y, \mathscr{O}_Y(L - E_1 - \cdots - E_r)) \to \cdots \to \bigoplus_{i=1}^{r} H^0(Y, \mathscr{O}_Y(L - E_i)) \to H^0(Y, \mathscr{O}_Y(L)) \to 0. 
\end{equation*}
We can form a similar complex $\mathscr{C}_{\bullet}$ of $\mathscr{O}_Y$-modules of the form 
\begin{equation*}
    0 \to \mathscr{O}_Y(L - E_1 - \cdots - E_r) \to \cdots \to \bigoplus_{i=1}^{r} \mathscr{O}_Y(L - E_i) \to \mathscr{O}_Y(L) \to 0, 
\end{equation*}
so that $C_p = H^0(Y, \mathscr{C}_p)$ for each $p$. 

We may assume each $E_i$ is $\mathbb{Q}$-Cartier by a small birational modification of $Y$ that does not affect $\mathcal{R}$. By the last paragraph, we need to consider the complexes $\mathscr{C}_{\bullet}$ and $C_{\bullet}$ for a $\mathbb{Q}$-Cartier $\mathbb{Z}$-divisor $L$. First, we show that $H_p(\mathscr{C}_{\bullet}) = 0$ for all $p > 0$. In fact, after passing to a finite abelian cover, each $E_i$ is Cartier, and their defining equations form a regular sequence; see Lemma \ref{Koszul_complex_div}. If all the cohomology $H^q(Y, \mathscr{C}_p)$ vanish for $q > 0$, then we can conclude that $H_p(C_{\bullet}) = 0$ for all $p > 0$ by a spectral sequence. However, one shouldn't expect the vanishing holds on $Y$. Instead, we show that if $L$ is pseudo-effective (otherwise $C_{\bullet} = 0$ is trivially exact), then we can replace $Y$ with a semi-ample model $Y'$ for $L$ using the MMP, and the vanishing theorems on $Y'$ imply $H_p(C_{\bullet}) = 0$ for all $p > 0$; see Lemma \ref{pass_to_sa_model} and Lemma \ref{Koszul_exact_on_sections}. 

Moreover, the central fiber $\mathcal{R}/(t_1, \ldots, t_r)\mathcal{R}$ is a direct sum of modules 
\begin{equation*}
    H^0(Y, \mathscr{O}_Y(L)) \bigg/ \sum_{i=1}^{r} H^0(Y, \mathscr{O}_Y(L - E_i)). 
\end{equation*}
For simplicity, assume that $Z$ is irreducible here. We show that this module is nonzero only if the MMP $Y \dashrightarrow Y'$ is a local isomorphism at the generic point $\zeta$ on $Z$, and it is isomorphic to $H^0(Z', \mathscr{L}')$ for a torsion-free module $\mathscr{L}'$ of rank $1$ on the image $Z'$ of $Z$. Hence we get an inclusion from $\mathcal{R}/(t_1, \ldots, t_r)\mathcal{R}$ to a polynomial ring over $K(Z)$. 
\end{proof}

\subsection{The structure of the graded ring} Combining Theorem \ref{main_thm_1} and Theorem \ref{main_thm_2}, we can relate the graded ring $\mathrm{gr}_v(R) \simeq \mathcal{R}/(t_1,\ldots,t_r)\mathcal{R}$ with a polynomial ring over the function field $K(Z)$ of the center $Z = \bigcap_{i=1}^{r} E_i$. Indeed, in the sketch of proof above, the nonzero graded pieces $\mathcal{R}/(t_1,\ldots,t_r)\mathcal{R}$ are given by sections of certain sheaves on birational models of $Z$. Keeping track of the proof, we have a more explicit result for the structure of $\mathrm{gr}_v(R)$ as follows. 

\begin{cor}[Corollary \ref{graded_ring_from_polyhedral_divisorial_sheaf}]
    Keep the assumptions in Theorem \ref{main_thm_2}. Assume that $Z = \bigcap_{i=1}^{r} E_i$ is irreducible with the generic point $\zeta \in Z$. Write 
    \begin{equation*}
        \overline{\mathcal{R}} = \mathcal{R}/(t_1, \ldots, t_r)\mathcal{R} = \bigoplus_{u \in \mathbb{N}^{s+r}} \overline{\mathcal{R}}_u, 
    \end{equation*}
    with the grading by $\mathbb{N}^{s+r}$ induced by the natural grading on $\mathcal{R}$. Let $\sigma \subset \mathbb{R}^{s+r}$ denote the closed convex cone spanned by all $u \in \mathbb{N}^{s+r}$ such that $\overline{\mathcal{R}}_u \neq 0$. Then $\sigma$ is a convex rational polyhedral cone, and for $u = (n,m) \in \mathbb{N}^{s+r}$, $u \in \sigma$ if and only if $\zeta \notin \mathbf{B}(n_1L_1 + \cdots + n_sL_s - m_1E_1 - \cdots - m_rE_r)$. 

    Assume furthermore that $(Y, \Delta)$ is dlt. Then there exists a positive integer $\ell$, a projective birational morphism $\rho \colon W \to Z$ with $W$ regular, and a collection of invertible $\mathscr{O}_W$-modules $\{\mathscr{L}(u)\}_{u \in \sigma \cap \ell \mathbb{N}^{s+r}}$ with nonzero maps 
    \begin{equation*}
        \mu_{u,u'} \colon \mathscr{L}(u) \otimes \mathscr{L}(u') \to \mathscr{L}(u+u')
    \end{equation*}
    such that the following hold: 
    \begin{enumerate}[label=\emph{(\arabic*)}, nosep]
        \item $\bigoplus_{u} \mathscr{L}(u)$ form a commutative $\mathscr{O}_W$-algebra, 
        \item there is a canonical isomorphism of graded rings 
        \begin{equation*}
            \overline{\mathcal{R}}_{(\ell)} \simeq \bigoplus_{u \in \sigma \cap \ell \mathbb{N}^{s+r}} H^0(W, \mathscr{L}(u))
        \end{equation*}
        where $\overline{\mathcal{R}}_{(\ell)} = \bigoplus_{u \in \ell\mathbb{N}^{s+r}} \overline{\mathcal{R}}_u \subset \overline{\mathcal{R}}$ is the $\ell$-th Veronese subring. 
        \item there is a decomposition $\sigma = \bigcup_{\lambda \in \Lambda} \sigma_{\lambda}$ of $\sigma$ into finitely many convex rational polyhedral cones $\sigma_{\lambda}$, such that if $u, u' \in \sigma_{\lambda} \cap \ell \mathbb{N}^{s+r}$ for some $\lambda \in \Lambda$, then the map $\mu_{u,u'}$ is an isomorphism. 
    \end{enumerate}
\end{cor}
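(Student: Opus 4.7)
The plan is to derive this corollary by tracking the proof of Theorem \ref{main_thm_2} in detail, using the identifications of graded pieces already established there. Since Theorem \ref{main_thm_2}(1) gives that $\mathcal{R}$ is of finite type over $A$, the quotient $\overline{\mathcal{R}}$ is a finitely generated $\mathbb{N}^{s+r}$-graded $A$-algebra; choosing homogeneous generators, the semigroup $\Sigma = \{u \in \mathbb{N}^{s+r} : \overline{\mathcal{R}}_u \neq 0\}$ is finitely generated, so $\sigma = \mathbb{R}_{\geq 0} \cdot \Sigma$ is automatically a rational polyhedral cone. For the base locus description, write $D_u = \sum_i n_i L_i - \sum_j m_j E_j$ for $u = (n, m)$, so that
\begin{equation*}
    \overline{\mathcal{R}}_u = H^0(Y, \mathscr{O}_Y(D_u)) \bigg/ \sum_i H^0(Y, \mathscr{O}_Y(D_u - E_i));
\end{equation*}
if $\zeta \notin \mathbf{B}(D_u)$, I would pick $N$ with $N D_u$ Cartier and a section $s \in H^0(Y, N D_u)$ nonvanishing at $\zeta$, and observe that since $\zeta \in E_i$ for every $i$, every element of $H^0(Y, N D_u - E_i)$ vanishes at $\zeta$, so $[s] \neq 0$ in $\overline{\mathcal{R}}_{Nu}$ and hence $u \in \sigma$. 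Conversely, a nonzero class in $\overline{\mathcal{R}}_{Nu}$ maps to a nonzero element of $K(Z)$ by the injection of Theorem \ref{main_thm_2}(3), which forces a representative with $s|_\zeta \neq 0$, giving $\zeta \notin \mathbf{B}(D_u)$.

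For the dlt portion, the strategy is to bootstrap from the local-at-$\zeta$ description already produced in the proof of Theorem \ref{main_thm_2}: each nonzero graded piece $\overline{\mathcal{R}}_u$ is identified with $H^0(Z_u, \mathscr{L}_u')$ on a birational model $Z_u \to Z$, obtained as the restriction to $Z$ of a partial MMP run on $Y$ to reach a semi-ample model for $D_u$, with $\mathscr{L}_u'$ a torsion-free rank-one sheaf. Under the dlt hypothesis, $Z$ is normal, so these MMPs and birational models make sense in the standard framework. Since $\overline{\mathcal{R}}$ is finitely generated, only finitely many distinct geometric configurations $\{Z_u\}_{u \in \sigma}$ occur, so one can choose a single projective birational $\rho \colon W \to Z$ with $W$ regular that dominates every $Z_u$. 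Taking $\ell$ divisible enough that $\ell D_u$ is Cartier on a finite set of generators of $\sigma$ and the pulled-back sheaves $\mathscr{L}(u)$ on $W$ are invertible for $u \in \sigma \cap \ell \mathbb{N}^{s+r}$, the commutative $A$-algebra structure on $\overline{\mathcal{R}}_{(\ell)}$ descends to nonzero $\mathscr{O}_W$-linear maps $\mu_{u, u'} \colon \mathscr{L}(u) \otimes \mathscr{L}(u') \to \mathscr{L}(u + u')$, making $\bigoplus_u \mathscr{L}(u)$ a commutative $\mathscr{O}_W$-algebra and giving (1) and (2) by construction together with the identification $\overline{\mathcal{R}}_u \simeq H^0(W, \mathscr{L}(u))$ (using that $\rho$ is birational between normal varieties).

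For the chamber decomposition in (3), the idea is that two rays lie in a common $\sigma_\lambda$ precisely when the MMPs for $D_u$ on $Y$ terminate on the same birational model with semi-ample output; on that common model the divisor class depends $\mathbb{Z}$-linearly on $u$, so $\mathscr{L}(u) \otimes \mathscr{L}(u') \cong \mathscr{L}(u + u')$ canonically and $\mu_{u, u'}$ is an isomorphism. Finiteness of $\Lambda$ then follows from finite generation of $\overline{\mathcal{R}}_{(\ell)}$ together with the standard finiteness of small $\mathbb{Q}$-factorial modifications arising in an MMP with a moving boundary from \cite{BCHM}. The main obstacle is the second stage: building a single $W$ and producing a coherent, algebra-compatible family $u \mapsto \mathscr{L}(u)$ for all $u \in \sigma \cap \ell \mathbb{N}^{s+r}$ rather than just ray-by-ray. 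This forces one to run the MMP on $(Y, \Delta)$ uniformly for a family of perturbations and to check that the multiplications descending from $\overline{\mathcal{R}}$ realize the expected tensor product on the semi-ample models, not merely at the generic fiber over $\zeta$.
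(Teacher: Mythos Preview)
Your argument for the characterization of $\sigma$ is correct and essentially the same as the paper's (the paper invokes Lemma~\ref{Koszul_exact_on_sections}(2) directly rather than the injection of Theorem~\ref{the_central_fiber_of_Rees_alg}(3), but these encode the same content).

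The genuine gap is in the construction of the sheaf-level multiplications $\mu_{u,u'}$. You say the $A$-algebra structure on $\overline{\mathcal{R}}_{(\ell)}$ ``descends'' to $\mathscr{O}_W$-linear maps $\mathscr{L}(u)\otimes\mathscr{L}(u')\to\mathscr{L}(u+u')$, but a map of global sections does not produce a map of sheaves; this step needs an honest geometric construction, and your choice of $W$ as an arbitrary regular model of $Z$ dominating the $Z'_u$ gives no way to compare the pulled-back line bundles. The paper resolves this by working one level up: it takes a single log resolution $\pi\colon V\to (Y,E)$ which is a local isomorphism over $\zeta$ and for which every composite $\pi_u=\phi_u\circ\pi\colon V\to Y'_u$ is a \emph{morphism}, and lets $W\subset V$ be the stratum of the snc pair $(V,\pi^{-1}_*E)$ over $\zeta$ (this is why $W$ is regular). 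Setting $\mathscr{L}(u)=\mathscr{O}_V(\pi_u^*L'_u(u))|_W$, the key input is the divisor inequality on $V$
\begin{equation*}
    \pi_u^*L'_u(u)+\pi_{u'}^*L'_{u'}(u')\;\leq\;\pi_{u+u'}^*L'_{u+u'}(u+u'),
\end{equation*}
which follows because each $\phi_u$ is $L(u)$-non-positive (so $\pi_u^*L'_u(u)\leq\pi^*L(u)$) and $Y'_{u+u'}$ is a semi-ample model for $\pi^*L(u+u')$. This inequality gives an $\mathscr{O}_V$-algebra $\bigoplus_u\mathscr{O}_V(\pi_u^*L'_u(u))$, whose restriction to $W$ is the required $\mathscr{O}_W$-algebra; equality on a chamber $\sigma_\lambda$ gives (3), and the ring isomorphism (2) is then checked by embedding both sides compatibly into $\bigoplus_u\mathscr{O}_Y(L(u))\otimes K(Z)$. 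The ambient $V$ is essential here: without it there is no common space on which the relevant divisors can be compared.
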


Suppose $x \in (X = \mathop{\mathrm{Spec}}(A), \Delta)$ is a klt singularity, and $v$ is a quasi-monomial valuation as in Theorem \ref{main_thm_1} on a birational model $f \colon (Y, E) \to X$. Then one usually consider $X_0 = \mathop{\mathrm{Spec}}(\mathrm{gr}_v(A))$ with the natural torus action given by the $\mathbb{N}^r$-grading, which is important in the local K-stability theory. By the Corollary above, the weight cone of $X_0$ is spanned by all $m \in \mathbb{N}^r$ such that $\zeta \notin \mathbf{B}(-m_1E_1 - \cdots - m_rE_r)$. It seems to the author that this cannot be deduced directly from the multi-step degeneration in \cite{XZ_stable_deg}. 

\begin{rem}
    The results from higher-dimensional geometry that we need are the standard Kawamata--Viehweg vanishing theorem and the MMP as in \cite{BCHM}. Both are known to be true in more general settings, see \cite{Murayama_vanishing} and \cite{LM_MMP_excellent}. Therefore, all of our results also hold if we assume $A$ is an excellent $\Bbbk$-algebra with a dualizing complex, by the same proof (but with more care if $A$ is not a local ring when we apply Bertini theorems; see \cite[\S 10]{LM_MMP_excellent}). 
\end{rem}

\subsection*{Acknowledgment} I would like to thank my advisor Chenyang Xu for his encouragement and support on this research. I would also like to thank Junyao Peng, Harold Blum, and Ziquan Zhuang for helpful discussions and comments. The author is partially supported by the Simons Collaboration on Moduli of Varieties and the NSF grant DMS-2201349. 

\section{Preliminaries}

\subsection{Birational geometry}

Let $\Bbbk$ be a field of characteristic zero. All schemes we consider are quasi-compact, separated, and essentially of finite type over $\Bbbk$. 

\begin{defn}
    A pair $(X, \Delta)$ consists of a normal equidimensional scheme $X$ and an effective $\mathbb{Q}$-divisor $\Delta$ on $X$. Let $K_X$ denote a \emph{canonical divisor} of $X$. 

    We follow the standard terminology for singularities of pairs as in \cite[Def.\ 2.8]{Kollar_singularity}. We always use \emph{klt} short for ``Kawamata log terminal'', \emph{dlt} for ``divisorial log terminal'', and \emph{lc} for ``log canonical''. 

    A normal scheme $X$ is said to be of \emph{klt type} if there exists a Zariski open covering $X = \bigcup_{i} X_i$ and effective $\mathbb{Q}$-Cartier divisors $\Delta_i$ on $X_i$ such that each $(X_i, \Delta_i)$ is klt. 

    Suppose $(X, \Delta)$ is an lc pair. A point $z \in X$ is called an lc center of $(X, \Delta)$ if there is a prime divisor $E$ over $X$ with center $z$ such that $A_{X,\Delta}(E) = 0$; see \cite[Def.\ 4.15]{Kollar_singularity}. 
\end{defn}

\begin{defn}
    A rational map $\pi \colon Y \dashrightarrow X$ between normal schemes is called a \emph{(proper) birational contraction} if there exists a diagram 
    \begin{equation*}
        \begin{tikzcd}
            & W \ar[ld, "q"'] \ar[rd, "p"] & \\
            Y \ar[rr, "\pi", dashed] & & X
        \end{tikzcd}
    \end{equation*}
    where $W$ is a normal scheme, $p$ and $q$ are proper birational morphisms with $\pi = p \circ q^{-1}$, such that $\pi^{-1}$ has no exceptional divisors, that is, every $q$-exceptional divisor is also $p$-exceptional. 

    Let $L$ be a $\mathbb{Q}$-Cartier $\mathbb{Q}$-divisor on $Y$ such that $M = \pi_{*}L$ is also $\mathbb{Q}$-Cartier. The birational contraction $\pi$ is said to be \emph{$L$-non-positive} if $q^{*}L = p^{*}M + E$ for some $p$-exceptional effective $\mathbb{Q}$-divisor $E$ on $W$. 
\end{defn}

\begin{lem} \label{compare_sections}
    Let 
    \begin{equation*}
        \begin{tikzcd}
            Y \ar[rr, "\phi", dashed] \ar[rd, "g"'] & & Y' \ar[ld, "g'"] \\
            & S &
        \end{tikzcd}
    \end{equation*}
    be a commutative diagram of schemes, such that $Y$ and $Y'$ are normal, $g$ and $g'$ are proper, and $\phi$ is a birational contraction. Suppose $L$ is a $\mathbb{Q}$-Cartier $\mathbb{Z}$-divisors on $Y$ such that $L' = \phi_{*}L$ is also $\mathbb{Q}$-Cartier, and $\phi$ is $L$-non-positive. Let $E$ be an effective $\mathbb{Z}$-divisor on $Y$ with no $\phi$-exceptional components. Then 
    \begin{equation*}
        g_{*}\mathscr{O}_Y(L - E) \simeq g'_{*}\mathscr{O}_{Y'}(L' - E'). 
    \end{equation*}
    where $E' = \phi_{*}E$. 
\end{lem}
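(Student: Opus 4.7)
The plan is to pass to a common normal birational model and check the equality of sections as subspaces of the function field, divisor by divisor. Since the statement is local on $S$, I reduce to comparing $H^0(Y, \mathscr{O}_Y(L - E))$ and $H^0(Y', \mathscr{O}_{Y'}(L' - E'))$ over an affine open of $S$, and view both as the subspaces of $K(Y) = K(Y')$ consisting of rational functions $f$ with $\mathrm{div}(f) + L - E \geq 0$ (resp.\ $\mathrm{div}(f) + L' - E' \geq 0$) as Weil divisors. I then fix a normal scheme $W$ together with proper birational morphisms $q \colon W \to Y$ and $p \colon W \to Y'$ realizing $\phi = p \circ q^{-1}$, and use the $L$-non-positivity to write $q^{*}L = p^{*}L' + F$ with $F \geq 0$ a $p$-exceptional $\mathbb{Q}$-divisor; recall also that because $\phi$ is a birational contraction, every $q$-exceptional divisor on $W$ is $p$-exceptional.

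For the inclusion $H^0(Y', \mathscr{O}_{Y'}(L' - E')) \subseteq H^0(Y, \mathscr{O}_Y(L - E))$, take $f$ with $\mathrm{div}(f) + L' - E' \geq 0$ and verify the inequality at each prime divisor $D \subset Y$. If $D$ is not $\phi$-exceptional, then $D$ has a unique corresponding prime divisor $D' \subset Y'$ (since $\phi^{-1}$ contracts no divisors), and the definitions of $\phi_{*}L$ and $\phi_{*}E$ give $\mathrm{ord}_{D'}(L') = \mathrm{ord}_D(L)$ and $\mathrm{ord}_{D'}(E') = \mathrm{ord}_D(E)$, so the inequality at $D'$ transfers. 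If $D$ is $\phi$-exceptional, then $\mathrm{ord}_D(E) = 0$ by hypothesis on $E$, so it suffices to show $\mathrm{ord}_D(f) + \mathrm{ord}_D(L) \geq 0$. Let $\tilde D \subset W$ be the strict transform of $D$; then $\tilde D$ is $p$-exceptional. Since $E' \geq 0$ we have $f \in H^0(Y', \mathscr{O}_{Y'}(L'))$, and pulling back via $p$ (after multiplying $L'$ by a positive integer to reach the Cartier case) gives $\mathrm{ord}_{\tilde D}(f) + \mathrm{ord}_{\tilde D}(p^{*}L') \geq 0$. Combining with $\mathrm{ord}_{\tilde D}(F) \geq 0$ and $q^{*}L = p^{*}L' + F$ yields $\mathrm{ord}_D(f) + \mathrm{ord}_D(L) = \mathrm{ord}_{\tilde D}(f) + \mathrm{ord}_{\tilde D}(q^{*}L) \geq 0$, as required.

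The reverse inclusion is easier: every prime divisor $D' \subset Y'$ has a non-$\phi$-exceptional strict transform $D \subset Y$ (again because $\phi^{-1}$ has no exceptional divisors), and the matching of orders of $L$ vs.\ $L'$ and of $E$ vs.\ $E'$ along the pair $(D, D')$ transfers the inequality at $D$ to $D'$. The main obstacle is the $\phi$-exceptional case in the first inclusion, where the two hypotheses are set up precisely to make the argument run: the $L$-non-positivity supplies an effective $p$-exceptional correction $F$ absorbing the discrepancy between $q^{*}L$ and $p^{*}L'$, while the assumption that $E$ has no $\phi$-exceptional components ensures no negative contribution from $E$ appears along such $D$. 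Everything else is standard bookkeeping about strict transforms on a common resolution and pulling back $\mathbb{Q}$-Cartier divisors.
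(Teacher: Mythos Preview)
Your argument is correct. Both inclusions go through exactly as you describe: the non-exceptional case is a tautology about pushforward of Weil divisors, and for the $\phi$-exceptional case you correctly combine the hypothesis $\mathrm{ord}_D(E)=0$ with the inequality $\mathrm{ord}_{\tilde D}(q^{*}L) \geq \mathrm{ord}_{\tilde D}(p^{*}L')$ coming from $F \geq 0$.

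Your route differs from the paper's. The paper passes entirely to the common roof $W$: it sets $L_W = \lfloor q^{*}L\rfloor$ and $E_W = q^{-1}_{*}E$, and then shows separately that $q_{*}\mathscr{O}_W(L_W - E_W) \simeq \mathscr{O}_Y(L-E)$ and $q'_{*}\mathscr{O}_W(L_W - E_W) \simeq \mathscr{O}_{Y'}(L'-E')$, each time invoking \cite[Lem.~7.30]{Kollar_singularity} (a pushforward lemma for divisorial sheaves under birational morphisms). Thus both sides are identified with the same object on $W$, and the isomorphism is obtained at the sheaf level on $S$ without ever writing down a rational function. Your approach, by contrast, works directly with the valuative description of sections on $Y$ and $Y'$, and only climbs to $W$ to handle the $\phi$-exceptional prime divisors. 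The paper's method is more structural and gets the sheaf isomorphism in one stroke via a black-box lemma; yours is more elementary and self-contained, making the role of the two hypotheses (no $\phi$-exceptional components in $E$, and $L$-non-positivity) completely transparent at the level of individual valuations.
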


\begin{proof}
    Let $W$ be a normal scheme with proper birational morphisms $q \colon W \to Y$ and $q' \colon W \to Y'$ such that $\phi = q' \circ q^{-1}$. By definition, that $\phi$ is $L$-non-positive means 
    \begin{equation*}
        q^{*}L = q'^{*}L' + F
    \end{equation*}
    for some $q'$-exceptional effective $\mathbb{Q}$-divisor $F$ on $W$. Let $L_W = \lfloor q^{*}L \rfloor$ and $E_W = q^{-1}_{*}E = q'^{-1}_{*}E'$. Then 
    \begin{equation*}
        -(L_W - E_W) \sim_{Y, \mathbb{Q}} E_W + (q^{*}L - \lfloor q^{*}L \rfloor)
    \end{equation*}
    where $(q^{*}L - \lfloor q^{*}L \rfloor)$ is $q$-exceptional, $\lfloor (q^{*}L - \lfloor q^{*}L \rfloor) \rfloor = 0$, and $E_W$ is effective without $q$-exceptional components. By \cite[Lem.\ 7.30]{Kollar_singularity}, we have 
    \begin{equation*}
        q_{*}\mathscr{O}_{W}(L_W - E_W) \simeq \mathscr{O}_{Y}(q_{*}(L_W - E_W)) = \mathscr{O}_{Y}(L - E). 
    \end{equation*}
    Note that $B_W \coloneqq L_W - \lfloor q'^{*}L' \rfloor$ is effective and $q'$-exceptional, and 
    \begin{equation*}
        -(\lfloor q'^{*}L' \rfloor - E_W) \sim_{Y', \mathbb{Q}} E_W + (q'^{*}L' - \lfloor q'^{*}L' \rfloor). 
    \end{equation*}
    Thus, by \cite[Lem.\ 7.30]{Kollar_singularity}, we have 
    \begin{equation*}
        q'_{*}\mathscr{O}_W(L_W - E_W) = q'_{*}\mathscr{O}_W(B_W + (\lfloor q'^{*}L' \rfloor - E_W)) \simeq \mathscr{O}_{Y'}(q'_{*}(\lfloor q'^{*}L' \rfloor - E_W)) = \mathscr{O}_{Y'}(L' - E'). 
    \end{equation*}
    Hence 
    \begin{equation*}
        g_{*}\mathscr{O}_{Y}(L-E) \simeq g_{*}q_{*}\mathscr{O}_W(L_W - E_W) = g'_{*}q'_{*}\mathscr{O}_W(L_W - E_W) \simeq g'_{*}\mathscr{O}_{Y'}(L' - E')
    \end{equation*}
    since $g \circ q = g' \circ q'$. 
\end{proof}

\subsection{Filtrations and extended Rees modules}

\begin{defn} \label{the_complex_C}
    Let $\mathsf{A}$ be an abelian category, and $M \in \mathsf{A}$. For a set of subobjects $M_1, \ldots, M_r \subset M$, let $C_{\bullet}(M; (M_i)_{i=1,\ldots,r})$ denotes the following chain complex: 
    \begin{equation*}
        \begin{aligned}
            0 \to M_1 \cap \cdots \cap M_r \to &\cdots \to \bigoplus_{1 \leq i_1 < \cdots < i_k \leq r} M_{i_1} \cap \cdots \cap M_{i_k} \to \\
            &\cdots \to \bigoplus_{1 \leq i < j \leq r} M_i \cap M_j \to \bigoplus_{i=1}^{r} M_i \to M \to 0
        \end{aligned}
    \end{equation*}
    where $M$ is placed in degree $0$, and the map 
    \begin{equation*}
        M_{i_1} \cap \cdots \cap M_{i_k} \to M_{i_1} \cap \cdots \cap \widehat{M}_{i_p} \cap \cdots \cap M_{i_k}
    \end{equation*}
    is given by $(-1)^{p+1}$ times the canonical inclusion. 
\end{defn}

\begin{defn}
    Let $R$ be a ring, and $M$ be an $R$-module. Suppose $\mathcal{F}_1, \ldots, \mathcal{F}_r$ are decreasing $\mathbb{Z}$-valued filtrations on $M$. The \emph{extended Rees module} is 
    \begin{equation*}
        \mathrm{Rees}_{\mathcal{F}_1, \ldots, \mathcal{F}_r}(M) \coloneqq \bigoplus_{(m_1, \ldots, m_r) \in \mathbb{Z}^r} (\mathcal{F}_1^{m_1}M \cap \cdots \cap \mathcal{F}_r^{m_r}M) t_1^{-m_1} \cdots t_r^{-m_r}
    \end{equation*}
    which is an $R[t_1, \ldots, t_r]$-submodule of $M[t_1^{\pm 1}, \ldots, t_r^{\pm 1}]$. 
\end{defn}

\begin{lem} \label{flatness_criterion_of_Rees}
    Let $\Bbbk$ be a field, $R$ be a $\Bbbk$-algebra, and $M$ be an $R$-module. Suppose $\mathcal{F}_1, \ldots, \mathcal{F}_r$ are $\mathbb{Z}$-valued decreasing filtrations on $M$. Then the following are equivalent: 
    \begin{enumerate}[label=\emph{(\arabic*)}, nosep]
        \item The extended Rees module $\mathrm{Rees}_{\mathcal{F}_1, \ldots, \mathcal{F}_r}(M)$ is flat over $\Bbbk[t_1, \ldots, t_r]$; 
        \item For all $m = (m_1, \ldots, m_r) \in \mathbb{Z}$ and $I \subset \{1, \ldots, r\}$, 
        \begin{equation*}
            H_p\left( C_{\bullet}(\mathcal{F}_1^{m_1}M \cap \cdots \cap \mathcal{F}_r^{m_r}M; (\mathcal{F}_1^{m_1}M \cap \cdots \cap \mathcal{F}^{m_i+1}M \cap \cdots \cap \mathcal{F}_r^{m_r}M)_{i \in I}) \right) = 0
        \end{equation*}
        for all $p > 0$. 
    \end{enumerate}
\end{lem}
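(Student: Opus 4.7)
The plan is to identify the chain complexes in condition (2) with multigraded pieces of Koszul complexes, and then conclude via standard flatness criteria. First I would set $A := \Bbbk[t_1, \ldots, t_r]$ and view $\mathcal{M} := \mathrm{Rees}_{\mathcal{F}_1, \ldots, \mathcal{F}_r}(M)$ as a $\mathbb{Z}^r$-graded $A$-module with $\deg t_i = e_i$: the degree-$m$ piece is $(\bigcap_j \mathcal{F}_j^{-m_j} M)\cdot t^m$, and multiplication by $t_i$ corresponds to the inclusion $\mathcal{F}_i^{-m_i} M \hookrightarrow \mathcal{F}_i^{-m_i - 1} M$ on the underlying copies of $M$. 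For each subset $I \subset \{1, \ldots, r\}$, writing $A_I := \Bbbk[t_i : i \in I] \subset A$, the Koszul complex $K_\bullet(\mathcal{M}; (t_i)_{i \in I})$ decomposes as a direct sum of its multigraded pieces. Reading off the definitions, the degree-$(-n)$ piece of $K_p$ is exactly $\bigoplus_{|J|=p,\, J \subset I} \left( \bigcap_{j \in J} \mathcal{F}_j^{n_j+1} M \cap \bigcap_{j \notin J} \mathcal{F}_j^{n_j} M \right)$, with the Koszul differential given by the alternating sum of natural inclusions. This is precisely the complex $C_\bullet$ appearing in condition (2) for $m = n$ and the subset $I$. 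Consequently, condition (2) for all $m$ and this fixed $I$ is equivalent to $\mathrm{Tor}^{A_I}_p(\Bbbk, \mathcal{M}) = 0$ for all $p > 0$.

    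For $(1) \Rightarrow (2)$, I would use that $A = A_I \otimes_{\Bbbk} \Bbbk[t_i : i \notin I]$ is free over $A_I$, so flatness of $\mathcal{M}$ over $A$ descends to flatness over every $A_I$, and the Tor vanishing follows. For $(2) \Rightarrow (1)$, the case $I = \{1, \ldots, r\}$ already gives $\mathrm{Tor}^A_p(\Bbbk, \mathcal{M}) = 0$ for all $p > 0$; since $(t_1, \ldots, t_r)$ is the unique graded maximal ideal of the $\mathbb{Z}^r$-graded ring $A$, the graded local criterion of flatness will then force $\mathcal{M}$ to be flat over $A$.

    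The hard part will be the last invocation of the local criterion of flatness, because $\mathcal{M}$ need not be finitely generated over $A$. I plan to handle this by noting that each graded piece of $\mathcal{M}$ sits inside the fixed $R$-module $M$, which gives $(t_1, \ldots, t_r)$-adic idealwise separation of $\mathcal{M}$; then Matsumura's version of the local criterion (\emph{Commutative Ring Theory}, Thm.\ 22.3) deduces flatness from $\mathrm{Tor}^A_1(\Bbbk, \mathcal{M}) = 0$. A parallel alternative would be to induct on $r$, using that $t_r$ always acts injectively on $\mathcal{M}$ (hence $\mathcal{M}$ is automatically flat over $\Bbbk[t_r]$) together with the observation that condition (2) restricted to subsets of $\{1, \ldots, r-1\}$ applies to the $\Bbbk[t_1, \ldots, t_{r-1}]$-module $\mathcal{M}/t_r \mathcal{M}$, which suffices to propagate flatness in the inductive step.
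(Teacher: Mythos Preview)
Your proposal is correct and follows the same core idea as the paper: both identify the degree-$(-m)$ piece of the Koszul complex $K_\bullet(\mathcal{M}; (t_i)_{i \in I})$ with the chain complex $C_\bullet$ in condition (2), and both reduce flatness to vanishing of this Koszul homology. The only difference is in packaging the flatness criterion: the paper observes directly that flatness of a $\mathbb{Z}^r$-graded module $F$ over $P=\Bbbk[t_1,\ldots,t_r]$ is equivalent to $\mathrm{Tor}_p^P(P/\mathfrak{a},F)=0$ for all graded prime ideals $\mathfrak{a}$, which are exactly the ideals $(t_i:i\in I)$, so condition (2) for all $I$ drops out at once; you instead treat the two implications separately, using descent along $A_I\hookrightarrow A$ for $(1)\Rightarrow(2)$ and the graded local criterion (with the Matsumura/induction justification for the non-finitely-generated case) for $(2)\Rightarrow(1)$. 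Both routes are valid and essentially equivalent; the paper's formulation is a bit cleaner and explains naturally why all subsets $I$ appear, while yours makes the non-finite-generation issue explicit and shows that only $I=\{1,\ldots,r\}$ is actually needed for $(2)\Rightarrow(1)$.
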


\begin{proof}
    Write $P = \Bbbk[t_1, \ldots, t_r]$ and $F = \mathrm{Rees}_{\mathcal{F}_1, \ldots, \mathcal{F}_r}(M)$, which are graded by $\mathbb{Z}^r$. Then $F$ is flat over $P$ if and only if 
    \begin{equation*}
        \mathrm{Tor}_p^P(P/\mathfrak{a}, F) = 0
    \end{equation*}
    for all $\mathbb{Z}^r$-graded ideals $\mathfrak{a} \subset P$ and all $p > 0$. Moreover, it suffices to consider graded prime ideals $\mathfrak{a} \subset P$. Then $\mathfrak{a} = (t_{i_1}, \ldots, t_{i_s})$ for some $I = \{i_1, \ldots, i_s\} \subset \{1, \ldots, r\}$. Now $P/\mathfrak{a}$ has a finite free resolution given by the Koszul complex $K_{\bullet}(P; (t_i)_{i \in I})$. Hence the flatness of $F$ is equivalent to that 
    \begin{equation*}
        H_p(K_{\bullet}(P; (t_i)_{i \in I}) \otimes_P F) = 0
    \end{equation*}
    for all $p > 0$ and $I \subset \{1, \ldots, r\}$. 

    To simplify the notations, we may assume that $I = \{1, \ldots, s\}$. The Koszul complex 
    \begin{equation*}
        K_{\bullet}(P; (t_i)_{i\in I}) \otimes_P F = K_{\bullet}(F; (t_i)_{i\in I})
    \end{equation*}
    is of the form 
    \begin{equation*}
        0 \to F \to \cdots \to \bigoplus_{1 \leq i_1 < \cdots < i_k \leq s} F \to \cdots \to \bigoplus_{i=1}^s F \to F \to 0, 
    \end{equation*}
    where the map $F \to F$ from index $(i_1, \ldots, i_k)$ to $(i_1, \ldots, \widehat{i_{\ell}}, \ldots, i_k)$ is the multiplication by $(-1)^{\ell + 1}t_{i_{\ell}}$. For $m \in \mathbb{Z}^r$, on the degree $-m$ part we have $(-1)^{\ell + 1}$ times the inclusion 
    \begin{equation*}
        F_{-m} \cap \mathcal{F}^{m_{i_1}+1}M \cap \cdots \cap \mathcal{F}^{m_{i_k}+1}M \to F_{-m} \cap \mathcal{F}^{m_{i_1}+1}M \cap \cdots \cap \widehat{\mathcal{F}^{m_{i_{\ell}}+1}M} \cap \cdots \cap \mathcal{F}^{m_{i_k}+1}M, 
    \end{equation*}
    with $F_{-m} = \mathcal{F}_1^{m_1}M \cap \cdots \cap \mathcal{F}_r^{m_r}M$. In other words, the degree $-m$ part of $K_{\bullet}(F; (t_i)_{i\in I})$ is 
    \begin{equation*}
        K_{\bullet}(F; (t_i)_{i\in I})_{-m} = C_{\bullet}(F_{-m}; (F_{-m} \cap \mathcal{F}^{m_i+1}M)_{i \in I}), 
    \end{equation*}
    and the right hand side is the complex in (2). 
\end{proof}

\begin{rem}
    The flatness of $\mathrm{Rees}_{\mathcal{F}_1, \ldots, \mathcal{F}_r}(M)$ over $\Bbbk[t_1, \ldots, t_r]$ is also equivalent to that $\mathcal{F}_1, \ldots, \mathcal{F}_r$ are \emph{compatible} filtrations on $M$. See \cite[\S15, Part 1]{MHM-project} for two other definitions for compatible filtrations, and the proofs that they are equivalent to the flatness of $\mathrm{Rees}_{\mathcal{F}_1, \ldots, \mathcal{F}_r}(M)$. 
\end{rem}

\begin{lem} \label{graded_of_compatible_filtration}
    Let $\Bbbk$ be a field, $R$ be a $\Bbbk$-algebra, and $M$ be an $R$-module. Suppose $\mathcal{F}_1, \ldots, \mathcal{F}_r$ are $\mathbb{Z}$-valued decreasing filtrations on $M$. Assume that $\mathcal{F}_i^0M = M$ for all $i = 1, \ldots, r$, and the extended Rees module $\mathrm{Rees}_{\mathcal{F}_1, \ldots, \mathcal{F}_r}(M)$ is flat over $\Bbbk[t_1, \ldots, t_r]$. Let $\alpha = (\alpha_1, \ldots, \alpha_r) \in \mathbb{R}_{> 0}^{r}$, and defined a decreasing $\mathbb{R}$-valued filtration $\mathcal{F}_{\alpha}$ on $M$ by 
    \begin{equation*}
        \mathcal{F}_{\alpha}^{\lambda}M = \sum_{\langle \alpha, m \rangle \geq \lambda} \mathcal{F}_1^{m_1}M \cap \cdots \cap \mathcal{F}_r^{m_r}M \subset M
    \end{equation*}
    for all $\lambda \in \mathbb{R}$, where the sum ranges over all $m = (m_1, \ldots, m_r) \in \mathbb{N}^r$ such that $\langle \alpha, m \rangle = \sum_{i=1}^{r} \alpha_i m_i \geq \lambda$. Then $\mathcal{F}_{\alpha}^0M = M$, and there is a canonical isomorphism 
    \begin{equation*}
        \mathcal{F}_{\alpha}^{\lambda}M/\mathcal{F}_{\alpha}^{>\lambda}M \simeq \bigoplus_{\langle \alpha, m \rangle = \lambda} \frac{\mathcal{F}_1^{m_1}M \cap \cdots \cap \mathcal{F}_r^{m_r}M}{(\mathcal{F}_1^{m_1+1}M \cap \cdots \cap \mathcal{F}_r^{m_r}M) + \cdots + (\mathcal{F}_1^{m_1}M \cap \cdots \cap \mathcal{F}_r^{m_r+1}M)}
    \end{equation*}
    for all $\lambda \in \mathbb{R}_{\geq 0}$, where $\mathcal{F}_{\alpha}^{>\lambda}M = \bigcup_{\lambda' > \lambda} \mathcal{F}_{\alpha}^{\lambda'}M$. Consequently, there is a canonical isomorphism 
    \begin{equation*}
        \mathrm{gr}_{\mathcal{F}_{\alpha}}(M) \simeq \mathrm{Rees}_{\mathcal{F}_1, \ldots, \mathcal{F}_r}(M)/(t_1,\ldots, t_r)\mathrm{Rees}_{\mathcal{F}_1, \ldots, \mathcal{F}_r}(M)
    \end{equation*}
    where $\mathrm{gr}_{\mathcal{F}_{\alpha}}(M) = \bigoplus_{\lambda \in \mathbb{R}_{\geq 0}} \mathcal{F}_{\alpha}^{\lambda}M/\mathcal{F}_{\alpha}^{>\lambda}M$. 
\end{lem}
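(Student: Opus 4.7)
The plan is to interpret the whole isomorphism as a computation of certain graded pieces of $\mathcal{R} := \mathrm{Rees}_{\mathcal{F}_1, \ldots, \mathcal{F}_r}(M)$, using flatness only through the vanishing of a single $\mathrm{Tor}_1$. The first assertion $\mathcal{F}_\alpha^0 M = M$ is immediate: taking $m = 0$ in the defining sum yields $\bigcap_i \mathcal{F}_i^0 M = M$ by hypothesis.

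For the description of the graded pieces, I set $A = \Bbbk[t_1, \ldots, t_r]$, $\mathfrak{m} = (t_1, \ldots, t_r)$, $G_m := \bigcap_i \mathcal{F}_i^{m_i} M$ for $m \in \mathbb{Z}^r$, and introduce the $\mathbb{Z}^r$-graded monomial ideals
$$\mathfrak{m}_\lambda := (t^m : m \in \mathbb{N}^r,\ \langle \alpha, m\rangle \geq \lambda) \subset A, \qquad \mathfrak{m}_{>\lambda} := \bigcup_{\lambda' > \lambda} \mathfrak{m}_{\lambda'}.$$
Unwinding the $\mathbb{Z}^r$-grading inside $\mathcal{R} \subset M[t_1^{\pm 1}, \ldots, t_r^{\pm 1}]$ gives $(\mathfrak{m}_\lambda \mathcal{R})_{-m''} = \sum_{m \in \mathbb{N}^r,\ \langle\alpha, m\rangle \geq \lambda} G_{m'' + m}$, and hence (by a cofinality check for $\mathcal{F}_\alpha^{>\lambda}$) the degree-$0$ component of $\mathfrak{m}_\lambda \mathcal{R}/\mathfrak{m}_{>\lambda}\mathcal{R}$ is exactly $\mathcal{F}_\alpha^\lambda M/\mathcal{F}_\alpha^{>\lambda} M$. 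Next I apply $(-) \otimes_A \mathcal{R}$ to the short exact sequence $0 \to \mathfrak{m}_\lambda/\mathfrak{m}_{>\lambda} \to A/\mathfrak{m}_{>\lambda} \to A/\mathfrak{m}_\lambda \to 0$; the flatness of $\mathcal{R}$ kills $\mathrm{Tor}_1^A(A/\mathfrak{m}_\lambda, \mathcal{R})$, so the sequence stays exact and
$$(\mathfrak{m}_\lambda/\mathfrak{m}_{>\lambda}) \otimes_A \mathcal{R} \simeq \mathfrak{m}_\lambda \mathcal{R}/\mathfrak{m}_{>\lambda}\mathcal{R}.$$
Moreover, for any generator $t^m$ of $\mathfrak{m}_\lambda$ with $\langle\alpha, m\rangle = \lambda$ and any $i$, the product $t_i \cdot t^m = t^{m + e_i}$ lies in $\mathfrak{m}_{>\lambda}$ since $\alpha_i > 0$, so $\mathfrak{m}_\lambda/\mathfrak{m}_{>\lambda}$ is $\mathfrak{m}$-annihilated, i.e., it is the graded $\Bbbk$-vector space $\bigoplus_{\langle\alpha, m\rangle = \lambda,\ m \in \mathbb{N}^r} \Bbbk \cdot t^m$ with $t^m$ sitting in $\mathbb{Z}^r$-degree $m$. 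Consequently $(\mathfrak{m}_\lambda/\mathfrak{m}_{>\lambda}) \otimes_A \mathcal{R} \simeq (\mathfrak{m}_\lambda/\mathfrak{m}_{>\lambda}) \otimes_\Bbbk (\mathcal{R}/\mathfrak{m}\mathcal{R})$, and extracting the degree-$0$ part pairs each $t^m$ with the degree-$(-m)$ piece of $\mathcal{R}/\mathfrak{m}\mathcal{R}$, namely $G_m/\sum_i G_{m+e_i}$, delivering the claimed formula for $\mathcal{F}_\alpha^\lambda M/\mathcal{F}_\alpha^{>\lambda} M$.

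The consequent isomorphism $\mathrm{gr}_{\mathcal{F}_\alpha}(M) \simeq \mathcal{R}/(t_1,\ldots,t_r)\mathcal{R}$ then follows by summing over $\lambda \geq 0$ and noting that the degree-$m$ piece of $\mathcal{R}/\mathfrak{m}\mathcal{R}$ vanishes whenever some $m_i < 0$ (since then $\mathcal{F}_i^{m_i} M = M = \mathcal{F}_i^{m_i+1} M$, forcing $G_m = G_{m + e_i}$), so only $m \in \mathbb{N}^r$ contribute, each to exactly one $\lambda = \langle\alpha, m\rangle$. The main step that requires genuine input is the $\mathrm{Tor}_1$-vanishing identification above; the rest is formal bookkeeping with the $\mathbb{Z}^r$-grading.
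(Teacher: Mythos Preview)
Your proof is correct and follows essentially the same approach as the paper: both introduce the monomial ideals $J_\lambda$ (your $\mathfrak{m}_\lambda$) of $\Bbbk[t_1,\ldots,t_r]$, identify the degree-$0$ part of $J_\lambda\mathcal{R}/J_{>\lambda}\mathcal{R}$ with $\mathcal{F}_\alpha^\lambda M/\mathcal{F}_\alpha^{>\lambda}M$, use flatness to identify $J_\lambda\mathcal{R}/J_{>\lambda}\mathcal{R}$ with $(J_\lambda/J_{>\lambda})\otimes\mathcal{R}$, and then read off the answer from the explicit description $J_\lambda/J_{>\lambda}\simeq\bigoplus_{\langle\alpha,m\rangle=\lambda}\Bbbk\cdot t^m$. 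The only cosmetic difference is that you phrase the flatness step via $\mathrm{Tor}_1$-vanishing on a short exact sequence, while the paper simply asserts $J_\lambda\mathcal{M}/J_{>\lambda}\mathcal{M}\simeq (J_\lambda/J_{>\lambda})\otimes_P\mathcal{M}$ directly; and in your final paragraph there is a harmless sign slip in the grading convention (what you call the ``degree-$m$ piece'' of $\mathcal{R}/\mathfrak{m}\mathcal{R}$ should, by your earlier convention, be the degree-$(-m)$ piece).
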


\begin{proof}
    We consider the following ideals of $P = \Bbbk[t_1, \ldots, t_r]$. For $m \in \mathbb{N}^r$, let $I_m = (t^m) \subset P$ where we write $t^m = t_1^{m_1} \cdots t_r^{m_r}$. Fix $\alpha \in \mathbb{R}_{>0}^r$, for $\lambda \in \mathbb{R}$, let $J_{\lambda} = \sum_{\langle \alpha, m \rangle \geq \lambda} I_m$ and $J_{>\lambda} = \sum_{\langle \alpha, m \rangle > \lambda} I_m$. Then it is clear that 
    \begin{equation*}
        J_{\lambda}/J_{>\lambda} \simeq \bigoplus_{\langle \alpha, m \rangle = \lambda} \Bbbk \cdot t^m \simeq \bigoplus_{\langle \alpha, m \rangle = \lambda} I_m/(t_1, \ldots, t_r)I_m. 
    \end{equation*}
    Consider the extended Rees module $\mathcal{M} = \mathrm{Rees}_{\mathcal{F}_1, \ldots, \mathcal{F}_r}(M)$ graded by $\mathbb{Z}^r$. The degree $0$ part of $I_m\mathcal{M}$ is $\mathcal{F}_1^{m_1}M \cap \cdots \cap \mathcal{F}_r^{m_r}M$. Thus, $\mathcal{F}_{\alpha}^{\lambda}M$ and $\mathcal{F}_{\alpha}^{>\lambda}M$ are the degree $0$ part of $J_{\lambda}\mathcal{M}$ and $J_{>\lambda}\mathcal{M}$, respectively. It is clear that $\mathcal{F}_{\alpha}^0M = M$. Since $\mathcal{M}$ is flat over $P = \Bbbk[t_1, \ldots, t_r]$, we have 
    \begin{equation*}
        J_{\lambda}\mathcal{M}/J_{>\lambda}\mathcal{M} \simeq J_{\lambda}/J_{>\lambda} \otimes_P \mathcal{M} \simeq \bigoplus_{\langle \alpha, m \rangle = \lambda} I_m\mathcal{M}/(t_1, \ldots, t_r)I_m\mathcal{M}. 
    \end{equation*}
    Taking the degree $0$ part, we have the desired isomorphism
    \begin{equation*}
        \mathcal{F}_{\alpha}^{\lambda}M/\mathcal{F}_{\alpha}^{>\lambda}M \simeq \bigoplus_{\langle \alpha, m \rangle = \lambda} \frac{\mathcal{F}_1^{m_1}M \cap \cdots \cap \mathcal{F}_r^{m_r}M}{(\mathcal{F}_1^{m_1+1}M \cap \cdots \cap \mathcal{F}_r^{m_r}M) + \cdots + (\mathcal{F}_1^{m_1}M \cap \cdots \cap \mathcal{F}_r^{m_r+1}M)}. 
    \end{equation*}
    Note that $G = \mathcal{M}/(t_1, \ldots, t_r)\mathcal{M}$ is non-zero only in degrees $m \in \mathbb{N}^r$, and its degree $m$ part $G_m$ can be identified with the degree $0$ part of $I_m\mathcal{M}/(t_1, \ldots, t_r)I_m\mathcal{M}$. Hence 
    \begin{equation*}
        \mathrm{gr}_{\mathcal{F}_{\alpha}}(M) = \bigoplus_{\lambda \in \mathbb{R}_{\geq 0}} \mathcal{F}_{\alpha}^{\lambda}M/\mathcal{F}_{\alpha}^{>\lambda}M \simeq \bigoplus_{\lambda \in \mathbb{R}_{\geq 0}} \bigoplus_{\langle \alpha, m \rangle = \lambda} G_m \simeq \bigoplus_{m \in \mathbb{N}^r} G_m = \mathcal{M}/(t_1,\ldots, t_r)\mathcal{M}, 
    \end{equation*}
    where we rearrange the sum over $m$. 
\end{proof}

\subsection{Valuations} All valuations that we consider are real valuations. 

Let $X$ be an integral scheme with the function field $K(X)$. A real valuation on $K(X)$ is a map
\begin{equation*}
    v \colon K(X)^{\times} \to \mathbb{R}
\end{equation*}
such that $v(fg) = v(f) + v(g)$ and $v(f + g) \geq \min\{ v(f), v(g) \}$ for all $f, g \in K(X)^{\times}$, where we also set $v(0) = +\infty$. The valuation $v$ is centered at a point $x \in X$ if $v(f) \geq 0$ for all $f \in \mathscr{O}_{X,x}$, and $v(f) > 0$ for all $f \in \mathfrak{m}_{X,x}$, where $\mathfrak{m}_{X,x} \subset \mathscr{O}_{X,x}$ is the maximal ideal. 

Suppose $v$ is a valuation centered at $x \in X$. Suppose $D$ is a Cartier divisor on $X$ represented by some $f \in K(X)^{\times}$ on a neighborhood of $x$, then we define $v(D) = v(f)$. More generally, if $D$ is a $\mathbb{Q}$-Cartier $\mathbb{Q}$-divisor such that $mD$ is Cartier for some integer $m > 0$, then we define $v(D) = \frac{1}{m} v(mD)$. 

\begin{exmp}[Toric valuations] \label{simple_toric}
    Let $M$ be a lattice, that is, a free abelian group of finite rank, and $N$ be the dual lattice. Let $\sigma \subset N_{\mathbb{R}}$ be a strongly convex rational polyhedral cone, and $U = \mathop{\mathrm{Spec}} \Bbbk[\sigma^{\vee} \cap M]$ be the corresponding affine toric variety over a field $\Bbbk$. Write $\Bbbk[\sigma^{\vee} \cap M] = \bigoplus_{u \in \sigma^{\vee} \cap M} \Bbbk \cdot \chi^u$. Then every $\alpha \in \sigma$ gives a real valuation $v_{\alpha}$ on $U$ defined by 
    \begin{equation*}
        v_{\alpha}\left( \sum_{u} c_u\chi^u \right) = \min \{ \langle \alpha, u \rangle : c_u \neq 0 \}. 
    \end{equation*}
    Suppose $\sigma$ is a simplicial cone of full dimension in $N_{\mathbb{R}}$, that is, $\sigma$ is spanned by $e_1, \ldots, e_r \in N$ such that $e_1, \ldots, e_r$ is a basis of $N_{\mathbb{R}}$. Moreover, assume $e_i$ generates $(\mathbb{R}_{\geq 0} \cdot e_i) \cap N$. Then we have toric divisors $D_i$ on $U$ corresponding to $e_i$, and each $D_i$ is $\mathbb{Q}$-Cartier. Write $\alpha = \sum_{i=1}^{r} \alpha_i e_i \in \sigma$, then 
    \begin{equation*}
        v_{\alpha}(D_i) = \alpha_i
    \end{equation*}
    for all $i$. Thus, we identify the set of toric valuations on $U$ with $\mathbb{R}_{\geq 0}^r$. Note that if $(\alpha_1, \ldots, \alpha_r) \in \mathbb{R}_{>0}^r$, then $v_{\alpha}$ is centered at the unique torus fixed point $o \in U$, and $\{o\} = \bigcap_{i=1}^{r} D_i$. 
\end{exmp}

\begin{defn} \label{def_simple_toroidal_QM}
    Let $(X, D)$ be a pair, where $X$ is the spectrum of a local ring of dimension $r$ with the closed point $x \in X$, and $D = \sum_{i=1}^{r} D_i$ is a sum of distinct prime divisors with $\{x\} = \bigcap_{i=1}^{r} D_i$. The pair $(X, D)$ is said to be \emph{simple-toroidal} if there exists an snc pair $(X', D' = \sum_{i=1}^{r} D_i')$ with an action of a finite abelian group $G$ that preserves each $D_i$ and is free on $X' \smallsetminus D'$ such that $(X, D) \simeq (X', D')/G$. 

    In this case, the complete local ring $\hat{\mathscr{O}}_{X,x}$ is isomorphic to the complete local ring $\hat{\mathscr{O}}_{U,o}$ of an affine toric variety $U$ over $\kappa(x)$ corresponding to a simplicial cone $\sigma \subset N_{\mathbb{R}}$, where $o \in U$ is the torus fixed point, and the divisors $D_i$ correspond to the toric divisors on $U$. Choose a basis, we may assume $\sigma = \mathbb{R}_{\geq 0}^{r}$, and $\mathbb{Z}^r \subset N \subset \mathbb{R}^{r}$, so that $D_i$ corresponds to the standard basis $e_i$. Let $M \subset \mathbb{R}^r$ be the dual lattice of $N$, and $M_{\geq 0} = M \cap \mathbb{R}_{\geq 0}^{r}$. Then every $f \in \mathscr{O}_{X,x}$ can be written as a formal power series $f = \sum_{u \in M_{\geq 0}} c_u \chi^u$. For $\alpha = (\alpha_1, \ldots, \alpha_r) \in \mathbb{R}_{\geq 0}^{r}$, we have a valuation $v_{\alpha}$ on $X$ given by 
    \begin{equation*}
        v_{\alpha}(f) = \min \{ \langle \alpha, u \rangle : c_u \neq 0 \}. 
    \end{equation*}
    The set of all such valuations is denoted by $\mathrm{QM}_x(X, D)$, called \emph{quasi-monomial valuations} adapted to $(X, D)$. By definition, we have a bijection 
    \begin{equation*}
        \mathrm{QM}_x(X, D) \xrightarrow{\sim} \mathbb{R}_{\geq 0}^{r}, \quad v \mapsto (v(D_1), \ldots, v(D_r)). 
    \end{equation*}
    Let $\mathrm{QM}_x^{\circ}(X, D) \subset \mathrm{QM}_x(X, D)$ denote the subset of all $v_{\alpha}$ for which $\alpha \in \mathbb{R}_{>0}^r$. Every $v_{\alpha} \in \mathrm{QM}_x^{\circ}(X, D)$ is centered at the closed point $x \in X$. 
\end{defn}

\begin{defn}
    Let $X$ be a normal integral scheme, and $v$ be a real valuation on $X$ centered at $x \in X$. Let $\mathscr{L}$ be an invertible $\mathscr{O}_X$-module, and $s \in H^0(X, \mathscr{L})$. We define 
    \begin{equation*}
        v(s) = v(\phi(s|_U))
    \end{equation*}
    for any local trivialization $\phi \colon \mathscr{L}|_U \xrightarrow{\sim} \mathscr{O}_X|_U$ on an open neighborhood $U \subset X$ of $x$. Equivalently, if $D = \mathrm{div}_{\mathscr{L}}(s)$ is the effective Cartier divisor defined by $s$, then $v(s) = v(D)$. 

    More generally, suppose $L$ is a $\mathbb{Q}$-Cartier $\mathbb{Z}$-divisor, and $mL$ is Cartier for some integer $m > 0$. We define $v(s) = \frac{1}{m}v(s^m)$ for all $s \in H^0(X, \mathscr{O}_X(L))$, where $s^m \in H^0(X, \mathscr{O}_X(mL))$. 
    
    The \emph{filtration} $\mathcal{F}_v$ on $H^0(X, \mathscr{O}_X(L))$ induced by $v$ is the decreasing $\mathbb{R}$-valued filtration defined by 
    \begin{equation*}
        \mathcal{F}_v^{\lambda}H^0(X, \mathscr{O}_X(L)) = \{ s \in H^0(X, \mathscr{O}_X(L)) : v(s) \geq \lambda \}
    \end{equation*}
    for all $\lambda \in \mathbb{R}$. Note that each $\mathcal{F}_v^{\lambda}H^0(X, \mathscr{O}_X(L)) \subset H^0(X, \mathscr{O}_X(L))$ is a $H^0(X, \mathscr{O}_X)$-submodule. 

    We also have a filtration $\mathcal{F}_v$ on $\mathscr{O}_X(L)$ by coherent $\mathscr{O}_X$-submodules such that 
    \begin{equation*}
        H^0(U, \mathcal{F}_v^{\lambda}(\mathscr{O}_X(L))) = \left\{ \begin{array}{ll}
            \mathcal{F}_v^{\lambda}H^0(U, \mathscr{O}_X(L)) & \text{if}\ x \in U \\
            H^0(U, \mathscr{O}_X(L)) & \text{if}\ x \notin U
        \end{array} \right.
    \end{equation*}
    for any open subscheme $U \subset X$. Write $\mathfrak{a}_{\lambda}(v) = \mathcal{F}_v^{\lambda}\mathscr{O}_X$, called the valuative ideals associated with $v$. 
\end{defn}

\subsection{Simple-toroidal lc centers} Recall that an lc pair $(X, \Delta)$ is dlt if and only if it is snc at every lc center. A more general notion of \emph{qdlt} pairs is defined in \cite[Def.\ 35]{dFKX}: an lc pair $(X, \Delta)$ is qdlt if it is simple-toroidal at every lc center. 

We will work with a weaker condition, where $(X, \Delta)$ is an lc pair with $\lfloor \Delta \rfloor = \sum_{i=1}^{r} D_i$ such that all the lc centers of $(X, \Delta)$ are contained in $\mathrm{Supp}(\lfloor \Delta \rfloor)$ and each $D_i$ is a reduced divisor that is $\mathbb{Q}$-Cartier at the lc centers. Note that $X$ is of klt type. The following lemmas study some local properties for such pairs near an lc center. 

\begin{lem} \label{toroidal_lc_centers}
    Let $(X, \Delta)$ be an lc pair. Suppose $D_1, \ldots, D_r$ are $\mathbb{Q}$-Cartier reduced divisors on $X$ such that $D = D_1 + \cdots + D_r \leq \lfloor \Delta \rfloor$. Let $Z = \bigcap_{i=1}^{r} D_i$. Then the following hold: 
    \begin{enumerate}[label=\emph{(\arabic*)}, nosep]
        \item Every irreducible component of $Z$ has codimension $r$ in $X$. 
        \item Let $z \in Z$ be a generic point, $X_z = \mathop{\mathrm{Spec}}(\mathscr{O}_{X,z})$, and $D_{i,z} = D_i|_{X_z}$. Then 
        \begin{equation*}
            \Delta|_{X_z} = D_z = D_{1,z} + \cdots + D_{r,z}, 
        \end{equation*}
        and $(X_z, D_z)$ is simple-toroidal. 
    \end{enumerate}
\end{lem}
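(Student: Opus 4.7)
The plan is to prove (1) first and then extract the simple-toroidal structure at a generic point of $Z$ directly, from which (2) follows by a log discrepancy computation. The main inputs are Krull's height theorem, the lc-center codimension count for $\mathbb{Q}$-Cartier boundary, and the qdlt structural theory of \cite{dFKX}.

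For (1), I would argue two inequalities on $\mathrm{codim}_X W$ for each irreducible component $W$ of $Z$. The upper bound $\mathrm{codim}_X W \leq r$ is immediate from Krull's height theorem applied to the $r$ $\mathbb{Q}$-Cartier divisors $D_1, \ldots, D_r$. For the lower bound I would use that $(X, D)$ is itself lc (since $D \leq \Delta$) together with the general principle that in an lc pair, a point contained in $k$ components of the reduced boundary that are $\mathbb{Q}$-Cartier there has codimension at least $k$: if $\dim X_w < r$ at the generic point $w$ of $W$, then on a log resolution of $(X_w, D|_{X_w})$ one finds an exceptional divisor whose log discrepancy for $(X, D)$ is negative, contradicting lc. This is essentially the standard codim-count for qdlt-type situations as in \cite{dFKX}.

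Fix a generic point $z$ of a component of $Z$. By (1), $X_z$ is local of dimension $r$ with closed point $z$ and $\{z\} = \bigcap D_{i,z}$, and $(X_z, D_z)$ is lc. Thus $(X_z, D_z)$ satisfies the qdlt condition of \cite[Def.\ 35]{dFKX}: there are $r = \mathrm{codim}_X(z)$ components of $\lfloor D_z \rfloor$ passing through $z$, all $\mathbb{Q}$-Cartier at $z$. Invoking the structural theorem for qdlt pairs from \cite{dFKX}, I would conclude that $(X_z, D_z)$ is simple-toroidal in the sense of Definition \ref{def_simple_toroidal_QM}, realized as a quotient of an snc pair $(X'_z, D'_z)$ by a finite abelian group acting freely outside $D'_z$ and preserving each $D'_{i,z}$.

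Finally I would upgrade $D_z$ to $\Delta|_{X_z}$. For each $\alpha \in \mathbb{R}_{>0}^r$ the toric model produces a valuation $v_\alpha$ centered at $z$ with $v_\alpha(D_i) = \alpha_i$. On the snc cover, $K_{X'_z} + \sum D'_{i,z}$ is locally trivial and a direct toric computation gives $A_{X'_z, D'_z}(v'_\alpha) = 0$; descending through the finite abelian quotient via the ramification formula $K_{X'_z} + D'_z = \pi^{*}(K_{X_z} + D_z)$ yields $A_{X_z, D_z}(v_\alpha) = 0$. If $\Delta|_{X_z} \neq D_z$, then $(\Delta - D)|_{X_z}$ is effective and nonzero, so some prime component $\Gamma$ passes through $z$; being cut out locally by a non-unit, it satisfies $v_\alpha(\Gamma) > 0$, whence $A_{X_z, \Delta|_{X_z}}(v_\alpha) = -v_\alpha((\Delta - D)|_{X_z}) < 0$, contradicting $(X, \Delta)$ being lc. Hence $\Delta|_{X_z} = D_z$, giving (2). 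The main obstacle I foresee is the codimension lower bound in (1) and the qdlt $\Rightarrow$ simple-toroidal step in (2); both are essentially the content of \cite{dFKX}. Once these structural inputs are granted, the remaining log discrepancy estimate is routine.
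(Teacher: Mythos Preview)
Your proposal is correct and follows essentially the same route as the paper, which likewise obtains the codimension upper bound from the $\mathbb{Q}$-Cartier hypothesis and then invokes \cite[Prop.\ 34]{dFKX} for the codimension equality and the simple-toroidal structure at a generic point of $Z$. The only difference is that the paper absorbs the conclusion $\Delta|_{X_z} = D_z$ into the same citation, whereas you spell it out via the log-discrepancy computation $A_{X_z,D_z}(v_\alpha)=0$ for $v_\alpha \in \mathrm{QM}^\circ_z(X_z,D_z)$; this extra step is correct (note $X_z$ is $\mathbb{Q}$-factorial once it is simple-toroidal, so $v_\alpha(\Gamma)$ makes sense for any prime divisor $\Gamma$ through $z$) and makes the argument slightly more self-contained.
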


\begin{proof}
    Since each $D_i$ is $\mathbb{Q}$-Cartier, every irreducible component of $Z$ has codimension at most $r$. Note that $Z$ is an lc center of $(X, \Delta)$. Thus, by \cite[Prop.\ 34]{dFKX}, $Z$ has codimension $r$ and (2) holds. 
\end{proof}

\begin{lem} \label{Koszul_complex_div}
    Let $(X, \Delta)$ be an lc pair such that $X$ is of klt type. Let $D_1, \ldots, D_r$ be $\mathbb{Q}$-Cartier reduced divisors on $X$ such that $D_1 + \cdots + D_r \leq \lfloor \Delta \rfloor$, and $L$ be a $\mathbb{Q}$-Cartier $\mathbb{Z}$-divisor on $X$. Let 
    \begin{equation*}
        C_{\bullet} = C_{\bullet}\left( \mathscr{O}_X(L); (\mathscr{O}_X(L - D_i))_{i=1,\ldots, r} \right) 
    \end{equation*}
    be the complex as in Definition \ref{the_complex_C}. Then $H_p(C_{\bullet}) = 0$ for all $p > 0$. 
\end{lem}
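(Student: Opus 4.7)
The plan is to verify the vanishing of $H_p(C_\bullet)$ stalk by stalk, since $C_\bullet$ is a complex of coherent sheaves. Fix $x \in X$. If $x \notin D_j$ for some $j$, then $\mathscr{O}_X(L - D_j)_x = \mathscr{O}_X(L)_x$, and the standard null-homotopy---sending $(s_I)_I$ to the tuple $(t_J)_J$ with $t_J = \pm s_{J \setminus \{j\}}$ when $j \in J$ and $t_J = 0$ otherwise---contracts $(C_\bullet)_x$ to zero. This reduces matters to the case $x \in Z = \bigcap_i D_i$.

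For $x \in Z$, I would pass to a finite abelian cover on which every $D_i$ becomes Cartier. Specifically, choose $N$ so that $ND_1, \ldots, ND_r, NL$ are all Cartier near $x$, with local equations $g_1, \ldots, g_r, h$, and form the normalized Kummer cover $\pi \colon X' \to X$ obtained by adjoining $\sqrt[N]{g_i}$ and $\sqrt[N]{h}$; this is an abelian Galois cover with group $G = (\mathbb{Z}/N)^{r+1}$ on which each $\pi^*D_i = \mathrm{div}(f_i)$ and $\pi^*L$ are Cartier. The central step is to show that at any $x' \in \pi^{-1}(x)$, the sequence $(f_1, \ldots, f_r)$ is regular on the invertible sheaf $\mathscr{O}_{X'}(\pi^*L)_{x'}$: the intersection $\bigcap_i \pi^*D_i$ has codimension exactly $r$ at $x'$ by Lemma \ref{toroidal_lc_centers}(1) and finiteness of $\pi$, so regularity will follow from Cohen--Macaulayness of $X'$ at $x'$.

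Once the regular-sequence property is established, the complex $\tilde{C}_\bullet = C_\bullet(\mathscr{O}_{X'}(\pi^*L); (\mathscr{O}_{X'}(\pi^*L - \pi^*D_i))_i)$ identifies, via multiplication-by-$f_I$ isomorphisms (using that the $D_i$ have no common components to ensure $\bigcap_{i \in I} \mathscr{O}_{X'}(\pi^*L - \pi^*D_i) = f_I \cdot \mathscr{O}_{X'}(\pi^*L)$), with the Koszul complex $K_\bullet(f_1, \ldots, f_r; \mathscr{O}_{X'}(\pi^*L))$, so $H_p(\tilde{C}_\bullet) = 0$ for $p > 0$. Descent to $X$ then uses that $\pi_*$ is exact since $\pi$ is finite, the $G$-invariant functor is exact in characteristic zero, and the character-decomposition of $\pi_*$ for Kummer covers identifies the trivial-character summand of $\pi_*\tilde{C}_\bullet$ term-by-term with $C_\bullet$ near $x$, yielding $H_p(C_\bullet)_x = 0$.

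The hard part will be the Cohen--Macaulayness of $X'$ at $x'$. Since $X$ is of klt type, my plan is to produce a klt boundary $\Delta^{\mathrm{klt}}$ on $X$ whose coefficient along each $D_i$ lies in $[1 - \tfrac{1}{N}, 1)$---achievable by perturbing a given klt boundary along $\sum_i D_i$ (possibly after enlarging $N$ and using $\sum_i D_i \leq \lfloor \Delta \rfloor$)---and then apply Riemann--Hurwitz:
\[
\pi^*(K_X + \Delta^{\mathrm{klt}}) = K_{X'} + \pi^*\Delta^{\mathrm{klt}} - (N-1)\sum_i \tilde{D}_i, \qquad \tilde{D}_i = \pi^{-1}(D_i)_{\mathrm{red}}.
\]
The coefficient of $\tilde{D}_i$ on the right becomes $Na_i^{\mathrm{klt}} - (N-1) \in [0, 1)$, so the resulting pair on $X'$ is klt, exhibiting $X'$ as of klt type and hence Cohen--Macaulay at $x'$. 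The delicate technical point is arranging the perturbation of $\Delta^{\mathrm{klt}}$; this is the main obstacle of the proof.
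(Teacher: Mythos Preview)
Your approach is essentially the same as the paper's: localize, pass to a finite cover where $L$ and the $D_i$ become Cartier, use that klt type implies Cohen--Macaulay, conclude that the $f_i$ form a regular sequence via the codimension count from Lemma~\ref{toroidal_lc_centers}, and identify $C_\bullet$ with a Koszul complex. The null-homotopy for $x \notin Z$ and the descent via $G$-invariants are correct elaborations of steps the paper leaves implicit.

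However, what you flag as the ``main obstacle'' rests on a misconception. The normalized Kummer cover you build---adjoining $\sqrt[N]{g_i}$ with $\mathrm{div}(g_i) = ND_i$---is \emph{quasi-\'etale}, not ramified along the $D_i$. At the generic point $\eta$ of a component of $D_i$, the local ring $\mathscr{O}_{X,\eta}$ is a DVR with uniformizer $\pi$, and $g_i = u\pi^N$ for a unit $u$ (since $\mathrm{ord}_\eta(g_i) = N$); adjoining $\sqrt[N]{g_i}$ therefore only adjoins $\sqrt[N]{u}$, which is \'etale in characteristic zero. The same reasoning applies to $h$ at generic points of components of $L$ (where $\mathrm{ord}_\eta(h) \in N\mathbb{Z}$ because $L$ is a $\mathbb{Z}$-divisor), and at all other codimension-$1$ points the $g_i, h$ are units. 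So your Riemann--Hurwitz formula is wrong: there is no $(N-1)\sum_i \tilde{D}_i$ correction, and
\[
\pi^*(K_X + \Delta^{\mathrm{klt}}) = K_{X'} + \pi^{*}\Delta^{\mathrm{klt}}
\]
for any klt boundary $\Delta^{\mathrm{klt}}$ on $X$. Klt type is then preserved by the standard fact that quasi-\'etale covers of klt pairs are klt, so $X'$ is Cohen--Macaulay without any perturbation of $\Delta^{\mathrm{klt}}$. Had the ramification been as you claimed, you would also have needed to control coefficients along $\mathrm{Supp}(L)$ (from adjoining $\sqrt[N]{h}$), which you omitted---but since the cover is quasi-\'etale this is moot. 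With this correction your argument goes through and matches the paper's.
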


\begin{proof}
    The problem is local, so we may assume that $X = \mathrm{Spec}(A)$ where $A$ is a local ring. Passing to a finite cover, we may assume that $L$ and all $D_i$ are Cartier. Write $D_i = \mathrm{div}(f_i)$ for some $f_i \in A$. Since $X$ is of klt type, $A$ is Cohen--Macaulay (see \cite[Cor.\ 2.88]{Kollar_singularity}). By Lemma \ref{toroidal_lc_centers}, $\bigcap_{i=1}^{r} D_i$ has codimension $r$ in $X$, hence $f_1, \ldots, f_r$ is a regular sequence of $A$ (see \cite[\texttt{02JN}]{stacks-project}). Then $C_{\bullet}$ is the Koszul complex $K_{\bullet}(A; f_1, \ldots, f_r)$. Hence $H_p(C_{\bullet}) = 0$ for all $p > 0$ (see \cite[\texttt{062F}]{stacks-project}). 
\end{proof}

\begin{lem} \label{res_of_divisor_to_center}
    Let $(X, \Delta)$ be an lc pair such that $X$ is of klt type. Let $D_1, \ldots, D_r$ be $\mathbb{Q}$-Cartier reduced divisors on $X$ such that $D_1 + \cdots + D_r \leq \lfloor \Delta \rfloor$, and $L$ be a $\mathbb{Q}$-Cartier $\mathbb{Z}$-divisor on $X$. Let $Z = \bigcap_{i=1}^{r} D_i$, and 
    \begin{equation*}
        \mathscr{L} \coloneqq \mathop{\mathrm{coker}} \left( \bigoplus_{i=1}^{r} \mathscr{O}_X(L - D_i) \to \mathscr{O}_X(L) \right) 
    \end{equation*}
    Then the following hold: 
    \begin{enumerate}[label=\emph{(\arabic*)},nosep]
        \item $\mathscr{L}$ is Cohen--Macaulay of pure codimension $r$ in $X$, and $\mathrm{Supp}(\mathscr{L}) \subset Z$. 
        \item Let $\zeta \in Z$ be a generic point. Then $\mathscr{L}_{\zeta} = \mathscr{O}_Y(L) \otimes \kappa(\zeta)$ if $L$ is Cartier at $\zeta$, and $\mathscr{L}_{\zeta} = 0$ if $L$ is not Cartier at $\zeta$. 
    \end{enumerate}
    In particular, $Z$ is Cohen--Macaulay and reduced. 
\end{lem}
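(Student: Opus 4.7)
The plan is to view the complex
\[
C_\bullet = C_\bullet(\mathscr{O}_X(L); (\mathscr{O}_X(L - D_i))_{i=1,\ldots,r})
\]
as a resolution of $\mathscr{L}$ via Lemma~\ref{Koszul_complex_div}, and then transfer structural information across a finite Galois cover that makes every divisor involved Cartier. Since the question is local, I would first reduce to $X = \mathop{\mathrm{Spec}}(A)$ with $A$ local, and then, exactly as in the proof of Lemma~\ref{Koszul_complex_div}, take a finite abelian cover $\pi \colon X' \to X$ with Galois group $G$ such that the pullbacks $L'$ and $D_i' = \mathrm{div}(f_i')$ are all Cartier on $X'$. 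The pullback of $C_\bullet$ is then the Koszul complex of the regular sequence $f_1', \ldots, f_r'$ tensored with $\mathscr{O}_{X'}(L')$, so its cokernel $\mathscr{L}'$ is just the line bundle $\mathscr{O}_{X'}(L')|_{Z'}$ on $Z' = \pi^{-1}(Z)$. In characteristic $0$, taking $G$-invariants is exact, which allows me to recover $\mathscr{L} = (\pi_* \mathscr{L}')^G$.

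For (1): $X$ being of klt type makes $A$ Cohen--Macaulay, and the cover $X'$, being obtained via iterated index-$1$ covers for the $D_i$'s and $L$, is again klt and hence CM. Quotienting by the regular sequence $f_1', \ldots, f_r'$ shows $Z'$ is CM of pure codimension $r$, so the line bundle $\mathscr{L}'$ on $Z'$ is CM of pure codimension $r$ in $X'$. Since $\pi$ is finite, $\pi_* \mathscr{L}'$ retains this property on $X$, and so does its $G$-invariant direct summand $\mathscr{L}$. The support containment $\mathrm{Supp}(\mathscr{L}) \subseteq Z$ is immediate from $\mathrm{Supp}(\mathscr{L}') \subseteq Z'$.

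For (2), I would fix a generic point $\zeta \in Z$ and use Lemma~\ref{toroidal_lc_centers} to identify $(X_\zeta, D_\zeta)$ with a simple-toroidal pair $(X_\zeta', D_\zeta')/G_\zeta$. In this local model, $\pi$ restricts (up to completion) to the toric cover trivializing the $D_i$'s and $L$, and the class of $L$ in the local class group at $\zeta$ is encoded by a character $\chi_L$ of $G_\zeta$ which is trivial if and only if $L$ is Cartier at $\zeta$. The $G_\zeta$-action on the fiber of $\mathscr{L}'$ at a preimage $\zeta'$ of $\zeta$ is precisely via $\chi_L$, so the invariants $\mathscr{L}_\zeta$ vanish when $\chi_L$ is non-trivial and equal $\mathscr{O}_X(L) \otimes \kappa(\zeta)$ otherwise. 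Taking $L = 0$ in (1) and (2) identifies $\mathscr{L}$ with the structure sheaf of the scheme-theoretic intersection defined by the $D_i$, giving $Z$ Cohen--Macaulay by (1) (so no embedded primes) and $\mathscr{O}_{Z,\zeta} = \kappa(\zeta)$ at each generic point by (2), hence $Z$ reduced.

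The hard part will be the character calculation in part~(2): making precise, in the simple-toroidal étale-local model, how the Galois action on the fiber of $\mathscr{O}_{X'}(L')$ captures the failure of $L$ to be Cartier at $\zeta$. Everything else --- the resolution via $C_\bullet$, the Cohen--Macaulay descent under the finite cover, and the deduction of the last sentence from parts (1) and (2) --- is fairly formal once the index-$1$ cover is in place.
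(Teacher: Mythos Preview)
Your approach is correct and complete, but it differs from the paper's in both parts. For (1), the paper never passes to a cover; instead it works directly on $X$ using the resolution $C_\bullet \to \mathscr{L}$ from Lemma~\ref{Koszul_complex_div} and a local-cohomology spectral sequence $E_1^{p,q} = H^q_{\mathfrak{m}}(X, C_p) \Rightarrow H^{p+q}_{\mathfrak{m}}(X,\mathscr{L})$. Each $C_p$ is a direct sum of sheaves $\mathscr{O}_X(B)$ with $B$ a $\mathbb{Q}$-Cartier $\mathbb{Z}$-divisor, and since $X$ is of klt type these are Cohen--Macaulay of full dimension, so $E_1^{p,q}=0$ for $q<\dim X$; this forces $\mathrm{depth}(\mathscr{L})\ge \dim X - r$. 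The support bound $\mathrm{Supp}(\mathscr{L})\subset Z$ is obtained directly from $\mathscr{I}_{D_i}\cdot\mathscr{O}_X(L)\subset\mathscr{O}_X(L-D_i)$. For (2), the paper also reduces to the simple-toroidal (toric) local model via Lemma~\ref{toroidal_lc_centers}, but then argues by a short monomial computation rather than characters: writing $L=\sum c_iD_i$ in the local class group, a section $\chi^u\in\mathscr{O}_X(L)$ not lying in any $\mathscr{O}_X(L-D_i)$ forces $\langle u,e_i\rangle+c_i=0$ for all $i$, which is exactly the condition that $L$ be principal at $\zeta$. Your Galois-descent package is more uniform---one mechanism (invariants under $G$) handles both the Cohen--Macaulay claim and the Cartier/non-Cartier dichotomy---while the paper's argument avoids any analysis of the cover for (1) and replaces the character computation you flag as ``the hard part'' by a two-line toric calculation. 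One small imprecision in your write-up: what you call ``the pullback of $C_\bullet$'' is not literally $\pi^*C_\bullet$ (which need not agree with $C_\bullet'$ since $\pi$ is not flat), but rather the analogous complex on $X'$; the identity you actually use, and which is correct, is $(\pi_*C_\bullet')^G = C_\bullet$.
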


\begin{proof}
    The problems are local, hence we may assume that $X = \mathop{\mathrm{Spec}}(A)$ where $(A, \mathfrak{m})$ is a local ring. 
    
    (1). Note that $\mathscr{I}_{D_i} = \mathscr{O}_X(-D_i) \subset \mathscr{O}_X$ is the ideal defining $D_i \subset X$ as a reduced closed subscheme. Thus, $Z = \bigcap_{i=1}^{r} D_i$ is defined by the ideal $\mathscr{I}_Z = \sum_{i=1}^{r} \mathscr{I}_{D_i} \subset \mathscr{O}_X$. Thus, if $L$ is a Cartier divisor, then the cokernel we get is $\mathscr{L} = \mathscr{O}_X(L)|_Z$. In general, $\mathscr{I}_{D_i} \cdot \mathscr{O}_X(L) \subset \mathscr{O}_X(L - D_i)$ since $\mathscr{O}_X(L - D_i)$ is the reflexive hull of $\mathscr{O}_X(-D_i) \otimes \mathscr{O}_X(L)$. Thus $\mathscr{I}_Z \cdot \mathscr{O}_X(L) \subset \sum_{i=1}^{r} \mathscr{O}_X(L-D_i)$. That is, $\mathscr{L}$ is annihilated by $\mathscr{I}_Z$, so $\mathrm{Supp}(\mathscr{L}) \subset Z$. 
    
    By Lemma \ref{toroidal_lc_centers}, $Z$ has pure codimension $r$ in $X$. Thus $\dim(\mathrm{Supp}(\mathscr{L})) \leq \dim(X) - r$. Let 
    \begin{equation*}
        C_{\bullet} = C_{\bullet}\left( \mathscr{O}_X(L); (\mathscr{O}_X(L - D_i))_{i=1,\ldots, r} \right) 
    \end{equation*}
    be the complex as in Definition \ref{the_complex_C}. Then $C_{\bullet}$ is quasi-isomorphic to $\mathscr{L}$ by Lemma \ref{Koszul_complex_div}. Then there is a spectral sequence for the local cohomology: 
    \begin{equation*}
        E_{1}^{p,q} = H^q_{\mathfrak{m}}(X, C_p) \Rightarrow H^q_{\mathfrak{m}}(X, \mathscr{L}). 
    \end{equation*}
    Each $C_p$ is a direct sum of modules of the form $\mathscr{O}_X(B)$ where $B$ is a $\mathbb{Q}$-Cartier $\mathbb{Z}$-divisor. Since $X$ of klt type, each $C_p$ is Cohen--Macaulay (see \cite[Cor.\ 2.88]{Kollar_singularity}), and $\mathrm{Supp}(C_p) = X$. Hence $H^q_{\mathfrak{m}}(X, C_p) = 0$ for all $q < \dim(X)$. Then the spectral sequence implies $H^q_{\mathfrak{m}}(X, \mathscr{L}) = 0$ for all $q < \dim(X) - r$. Thus 
    \begin{equation*}
        \mathop{\mathrm{depth}}(\mathscr{L}) \geq \dim(X) - r \geq \dim(\mathrm{Supp}(\mathscr{L})), 
    \end{equation*}
    see \cite[\texttt{0AVZ}]{stacks-project}. Since $\mathop{\mathrm{depth}}(\mathscr{L}) \leq \dim(\mathrm{Supp}(\mathscr{L}))$, all equalities hold. Hence $\mathscr{L}$ is Cohen--Macaulay, of pure codimension $r$ in $X$. 

    (2). We may further localize $X$ at $\zeta \in Z$. Then $(X, D)$ is simple-toroidal by Lemma \ref{toroidal_lc_centers}. Hence we are reduced to the toric case in Example \ref{simple_toric}, where $X = \mathop{\mathrm{Spec}} \kappa[\sigma^{\vee} \cap N]$ is the affine toric variety for a simplicial cone $\sigma \subset M_{\mathbb{R}}$. If $L$ is Cartier, then we may assume $L = 0$, and need to show $Z$ is the reduced fixed point. In this case, every $\chi^u$ for $u \in (\sigma^{\vee} \cap N) \smallsetminus \{0\}$ vanishes along some $D_i$, hence $\sum_{i=1}^{r} \mathscr{O}_X(-D_i)$ is the graded maximal ideal, and $\mathscr{O}_Z = \kappa$. In general, since the divisor class group of $X$ is generated by the toric divisors $D_i$, we may assume that $L = \sum_{i=1}^{r} c_iD_i$ for some $c_i \in \mathbb{Z}$. Suppose $u \in M$ and $\chi^u$ is section of $\mathscr{O}_X(L)$, that is, 
    \begin{equation*}
        \mathrm{div}(\chi^u) + L = \sum_{i=1}^{r} (\langle u, e_i \rangle + c_i) D_i \geq 0. 
    \end{equation*}
    If $L$ is not Cartier, then we must have $\langle u, e_i \rangle + c_i > 0$ for some $i$, hence $\chi^u$ is a section of $\mathscr{O}_X(L - D_i)$. Thus $\mathscr{O}_X(L) = \sum_{i=1}^{r} \mathscr{O}_X(L - D_i)$ in this case, and $\mathscr{L} = 0$.  

    Finally, for $L = 0$ we have $\mathscr{L} = \mathscr{O}_Z$. Then $Z$ is Cohen--Macaulay by (1), and generically reduced by (2). Hence $Z$ is reduced. 
\end{proof}

\begin{lem} \label{valuative_ideal_of_qm}
    Let $(X, \Delta)$ be an lc pair such that $X$ is of klt type. Let $D_1, \ldots, D_r$ be $\mathbb{Q}$-Cartier reduced divisors on $X$ such that $D = D_1 + \cdots + D_r \leq \lfloor \Delta \rfloor$. Assume $Z = \bigcap_{i=1}^{r} D_i$ is irreducible with the generic point $\zeta \in Z$. Suppose $\alpha = (\alpha_1, \ldots, \alpha_r) \in \mathbb{R}_{>0}^r$, and $v_{\alpha} \in \mathrm{QM}_{\zeta}^{\circ}(X, D)$ is the corresponding quasi-monomial valuation. Let $L$ be a $\mathbb{Q}$-Cartier $\mathbb{Z}$-divisor on $X$, and $\mathcal{F}_{v_{\alpha}}$ be the filtration on $\mathscr{O}_X(L)$ induced by $v_{\alpha}$. Then 
    \begin{equation*}
        \mathcal{F}_{v_{\alpha}}^{\lambda}(\mathscr{O}_X(L)) = \sum_{\langle \alpha, m \rangle \geq \lambda} \mathscr{O}_X(L - m_1D_1 - \cdots - m_rD_r), 
    \end{equation*}
    where the sum ranges over all $m = (m_1, \ldots, m_r) \in \mathbb{N}^r$ with $\langle \alpha, m \rangle = \sum_{i=1}^{r} \alpha_i m_i \geq \lambda$. 
\end{lem}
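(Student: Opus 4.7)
The inclusion $\supseteq$ is immediate: for $m \in \mathbb{N}^r$ with $\langle \alpha, m \rangle \geq \lambda$ and a section $s$ of $\mathscr{O}_X(L - E(m)) \subset \mathscr{O}_X(L)$, one has $\mathrm{div}_L(s) \geq \sum_i m_i D_i$, and combining $v_\alpha(D_i) = \alpha_i$ with the monotonicity of $v_\alpha$ on effective $\mathbb{Q}$-divisors gives $v_\alpha(s) \geq \langle \alpha, m \rangle \geq \lambda$.

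For the reverse inclusion, both sides are coherent subsheaves of $\mathscr{O}_X(L)$, so equality can be checked stalk by stalk. Away from $Z$ the stalk of the LHS is $\mathscr{O}_X(L)$ by the definition of the filtration sheaf, and any point there has some $D_j$ avoiding it, so $\mathscr{O}_X(L - n D_j)$ and $\mathscr{O}_X(L)$ have the same stalk at that point for every $n$; taking $n\alpha_j \geq \lambda$ shows the RHS stalk is also $\mathscr{O}_X(L)$. So the work concentrates on stalks in $Z$, and the heart of the argument is at the generic point $\zeta$. By Lemma \ref{toroidal_lc_centers}, $(X_\zeta, D|_{X_\zeta})$ is simple-toroidal, so the completion $\hat{\mathscr{O}}_{X,\zeta}$ is identified with the completion of an affine toric variety at its fixed point, with the $D_i$ corresponding to toric divisors and $v_\alpha$ to the monomial valuation of weights $\alpha$. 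After fixing a local trivialization of $\mathscr{O}_X(L)$, every element of the completed stalk decomposes as a formal series $s = \sum_u a_u \chi^u$ with $v_\alpha(s) = \min\{v_\alpha(\chi^u) : a_u \neq 0\}$. Thus $v_\alpha(s) \geq \lambda$ forces each nonzero character to satisfy $v_\alpha(\chi^u) \geq \lambda$, and each such $\chi^u$ visibly lies in $\mathscr{O}_X(L - E(m(u)))$ for the unique $m(u) \in \mathbb{N}^r$ with $\langle \alpha, m(u) \rangle = v_\alpha(\chi^u)$. This places $s$ in the RHS after completion, and faithful flatness of $\hat{\mathscr{O}}_{X,\zeta}$ over $\mathscr{O}_{X,\zeta}$ descends the equality to the stalk at $\zeta$.

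To extend from $\zeta$ to an arbitrary $y \in Z$: by construction of the filtration sheaf, the LHS stalk at $y$ equals $\mathscr{O}_X(L)_y \cap \mathcal{F}_{v_\alpha}^{\lambda}\mathscr{O}_X(L)_\zeta$ inside $\mathscr{O}_X(L)_\zeta$ (since every open containing $y$ contains $\zeta$). The analogous identity for the RHS holds because, in the simple-toroidal picture at $\zeta$, the RHS is locally the monomial-type ideal $\sum_{\langle \alpha, m \rangle \geq \lambda} (x^m)$ (shifted by the trivialization of $L$), which is $\mathfrak{m}_\zeta$-primary since each $\alpha_i > 0$ forces a power of $x_i$ into the sum. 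Consequently $\mathscr{O}_X(L)/\sum_m \mathscr{O}_X(L - E(m))$ has only the prime of $\zeta$ as an associated prime near $Z$, paralleling Lemma \ref{res_of_divisor_to_center}, and equality of stalks at $\zeta$ propagates to all of $Z$. The main technical obstacle is executing the descent between toric completion and the Zariski local ring, while keeping track of the trivialization-shift by $L$ and the finite-cover structure hidden in the simple-toroidal identification.
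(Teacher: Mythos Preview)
Your overall strategy matches the paper's: establish the equality at the generic point $\zeta$ via the toric/simple-toroidal description, then show that the quotient $\mathscr{O}_X(L)/\sum_m \mathscr{O}_X(L-E(m))$ has $\zeta$ as its only associated point on $Z$, so that equality at $\zeta$ propagates. The easy inclusion and the reduction away from $Z$ are fine.

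There is, however, a real gap at the propagation step. Your justification that ``the RHS is $\mathfrak{m}_\zeta$-primary since each $\alpha_i>0$ forces a power of $x_i$ into the sum'' is a statement about the stalk at $\zeta$, and it is tautologically true there; it says nothing about embedded primes of the global quotient sheaf supported on proper closed subsets of $Z$. The sentence ``Consequently $\mathscr{O}_X(L)/\sum_m \mathscr{O}_X(L-E(m))$ has only the prime of $\zeta$ as an associated prime'' is therefore a non-sequitur. This is exactly the subtlety the paper flags: sums of subsheaves do not commute with contraction from a localization, so knowing the sum equals $\mathcal{F}_{v_\alpha}^\lambda$ at $\zeta$ does not give it at other points of $Z$. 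Invoking Lemma~\ref{res_of_divisor_to_center} ``in parallel'' does not suffice either, since that lemma treats the much simpler cokernel of $\bigoplus_i \mathscr{O}_X(L-D_i)\to\mathscr{O}_X(L)$, not the sum over all $m$ with $\langle\alpha,m\rangle\geq\lambda$.

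The paper fills this gap as follows: using Lemma~\ref{Koszul_complex_div} and Lemma~\ref{flatness_criterion_of_Rees}, the extended Rees module for the filtrations $\mathcal{F}_i^k(\mathscr{O}_X(L))=\mathscr{O}_X(L-kD_i)$ is flat over $\Bbbk[t_1,\ldots,t_r]$; then Lemma~\ref{graded_of_compatible_filtration} identifies each graded piece $\mathcal{F}_\alpha^\lambda/\mathcal{F}_\alpha^{>\lambda}$ with a direct sum of modules of the form $\mathscr{O}_X(L')/\sum_i \mathscr{O}_X(L'-D_i)$, and \emph{those} are handled by Lemma~\ref{res_of_divisor_to_center} (Cohen--Macaulay, only $\zeta$ as associated point). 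Since $\mathscr{O}_X(L)/\mathcal{F}_\alpha^\lambda$ is a finite iterated extension of such pieces, it too has only $\zeta$ as associated point. This is the missing mechanism in your argument.

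A secondary issue: you assume a local trivialization of $\mathscr{O}_X(L)$ at $\zeta$, but $L$ is only $\mathbb{Q}$-Cartier. The paper handles this by writing $L\equiv\sum_i c_i D_i$ in the local class group (which is generated by the $D_i$ in the simple-toroidal situation) and computing $v_\alpha$ on sections accordingly; you should do the same rather than assume $L$ is locally principal.
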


\begin{proof}
    First we show the conclusion holds after localization at $\zeta$. Thus we may assume that $(X, D)$ is simple-toroidal. It suffices to show the formula holds after completion, so we may assume that $(X, D)$ is toric as in Example \ref{simple_toric}. Then $X = \mathop{\mathrm{Spec}} \kappa[\sigma^{\vee} \cap M]$ for a simplicial cone $\sigma \subset N_{\mathbb{R}}$, and $D_1, \ldots, D_r$ are the toric divisors corresponding to $e_1, \ldots, e_r \in N$, which span $\sigma$. Since the divisor class group of $X$ is generated by the toric divisors $D_i$, we may assume that $L = \sum_{i=1}^{r} c_iD_i$ for some $c_i \in \mathbb{Z}$, and $nL$ is Cartier for some $n > 0$. Then $nL = \mathrm{div}(\chi^{u})$ for some $u \in M$. Suppose $s = \chi^t$ is a section for $\mathscr{O}_X(L)$, then 
    \begin{equation*}
        v_{\alpha}(s) = \frac{1}{n} v_{\alpha}(s^n) = \sum_{i=1}^{r} \alpha_i(\langle t, e_i \rangle + c_i)
    \end{equation*}
    Let $m_i = \langle t, e_i \rangle + c_i$, then $s \in \mathscr{O}_X(L - \sum_{i=1}^{r} m_iD_i)$. Hence the conclusion holds in the toric case. 

    In general, to check a section $s$ of $\mathscr{O}_X(L)$ is in $\mathcal{F}_{v_{\alpha}}^{\lambda}(\mathscr{O}_X(L))$, it suffices to check its germ at $\zeta$, thus we have 
    \begin{equation*}
        \sum_{\langle \alpha, m \rangle \geq \lambda} \mathscr{O}_X(L - m_1D_1 - \cdots - m_rD_r) \subset \mathcal{F}_{v_{\alpha}}^{\lambda}(\mathscr{O}_X(L)), 
    \end{equation*}
    and they coincide at $\zeta$. Note that we only have an inclusion since taking sums of ideals doesn't commute with contraction from a localization in general. In other words, there is a surjective map 
    \begin{equation*}
        \mathscr{O}_X(L) \bigg/ \sum_{\langle \alpha, m \rangle \geq \lambda} \mathscr{O}_X(L - m_1D_1 - \cdots - m_rD_r) \to \mathscr{O}_X \big/\mathcal{F}_{v_{\alpha}}^{\lambda}(\mathscr{O}_X(L))
    \end{equation*}
    such that the kernel has support strictly contained in $Z$. We claim that the generic point $\zeta$ of $Z$ is the only possible associated point of the left hand side, hence the kernel must be $0$. 

    We may assume that $X = \mathop{\mathrm{Spec}}(A)$ is affine. Let $\mathcal{F}_i$ be the filtration on $\mathscr{O}_X(L)$ given by $D_i$, that is, 
    \begin{equation*}
        \mathcal{F}_i^{k}(\mathscr{O}_X(L)) = \mathscr{O}_X(L - kD_i)
    \end{equation*}
    for all $k \geq 0$. Define the filtration $\mathcal{F}_{\alpha}$ on $\mathscr{O}_X(L)$ by 
    \begin{equation*}
        \mathcal{F}_{\alpha}^{\lambda}(\mathscr{O}_X(L)) = \sum_{\langle \alpha, m \rangle \geq \lambda} \mathcal{F}_1^{m_1}(\mathscr{O}_X(L)) \cap \cdots \cap \mathcal{F}_r^{m_r}(\mathscr{O}_X(L)) = \sum_{\langle \alpha, m \rangle \geq \lambda} \mathscr{O}_X(L - m_1D_1 - \cdots - m_rD_r). 
    \end{equation*}
    By Lemma \ref{Koszul_complex_div} and Lemma \ref{flatness_criterion_of_Rees}, the extended Rees module for $\mathcal{F}_1, \ldots, \mathcal{F}_r$ is flat over $\Bbbk[t_1, \ldots, t_r]$. Hence by Lemma \ref{graded_of_compatible_filtration}, each $\mathcal{F}_{\alpha}^{\lambda}(\mathscr{O}_X(L))/\mathcal{F}_{\alpha}^{>\lambda}(\mathscr{O}_X(L))$ is a direct sum of modules of the form 
    \begin{equation*}
        \mathscr{O}_X(L') \bigg/ \sum_{i=1}^r \mathscr{O}_X(L' - D_i), 
    \end{equation*}
    where $L' = L - m_1D_1 - \cdots - m_rD_r$. By Lemma \ref{res_of_divisor_to_center}, $\zeta$ is the only possible associated point for these modules. Hence the same holds for $\mathscr{O}_X(L)/\mathcal{F}_{\alpha}^{\lambda}(\mathscr{O}_X(L))$. 
\end{proof}

\section{Main theorems}

Let $\Bbbk$ be a field of characteristic $0$, and $S$ be an affine scheme essentially of finite type over $\Bbbk$. 

We will mainly work in the following situation: $(Y, \Delta)$ is an lc pair, $g \colon Y \to S$ is a proper morphism such that $-(K_Y + \Delta)$ is $g$-ample, and $E_1, \ldots, E_r$ are reduced divisors on $Y$ with $E = \sum_{i = 1}^{r} E_i = \lfloor \Delta \rfloor$, such that every lc center of $(Y, \Delta)$ is contained in $\mathrm{Supp}(E)$, and each $E_i$ is $\mathbb{Q}$-Cartier at the lc centers. The main object of this section is to prove Theorem \ref{main_thm_1} (Theorem \ref{finite_generation_thm}). 

\subsection{Auxiliary birational models}

We start with a lemma that allows us to perturb coefficients and make birational modifications of $(Y, \Delta)$ while preserving the assumptions. 

\begin{lem}[cf. {\cite[Lem.\ 2.3 and Prop.\ 3.6]{XZ_stable_deg}}] \label{adjust_coeff_of_Fano_type_model}
    Let $(Y, \Delta)$ be an lc pair, and $g \colon Y \to S$ be a proper morphism such that $-(K_Y + \Delta)$ is $g$-ample. Suppose 
    \begin{equation*}
        \begin{tikzcd}
            Y \ar[rr, "\phi", dashed] \ar[rd, "g"'] & & Y' \ar[ld, "g'"] \\
            & S &
        \end{tikzcd}
    \end{equation*}
    is a birational contraction over $S$ such that  $\phi$ is a local isomorphism at the lc centers of $(Y, \Delta)$, and $H'$ is a $g'$-ample divisor on $Y'$. Suppose $\Delta_0$ and $\Delta_1$ are effective $\mathbb{Q}$-divisor on $Y$ such that $\Delta_0 \leq \Delta$, $\Delta_0$ is $\mathbb{Q}$-Cartier at the lc centers of $(Y, \Delta)$, and $\mathrm{Supp}(\Delta_1)$ doesn't contain any lc centers of $(Y, \Delta)$. Then there exists $\epsilon, \delta \in \mathbb{Q}_{>0}$ and an effective $\mathbb{Q}$-divisor $\Delta'$ on $Y'$ such that $\Delta' \geq \phi_{*}(\Delta - \Delta_0 + \epsilon \Delta_1)$, $(Y', \Delta')$ is lc, the lc centers of $(Y', \Delta')$ are the images of lc centers of $(Y, \Delta)$ that are not contained in $\mathrm{Supp}(\Delta_0)$, and $-(K_{Y'} + \Delta') \sim_{S, \mathbb{Q}} \delta H'$. 
\end{lem}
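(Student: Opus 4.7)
The plan is to first perturb $\Delta$ to $\Delta_{\epsilon} := \Delta - \Delta_0 + \epsilon \Delta_1$ on $Y$ for small $\epsilon > 0$, verify the singularity conditions on $Y$, and then push forward to $Y'$ and add a general effective $\mathbb{Q}$-divisor to realize the prescribed $\mathbb{Q}$-linear equivalence.

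For the singularity step, since $0 \leq \Delta_0 \leq \Delta$ is $\mathbb{Q}$-Cartier at every lc center of $(Y, \Delta)$, subtracting $\Delta_0$ strictly raises the discrepancy at every lc place whose center lies in $\mathrm{Supp}(\Delta_0)$. Hence in a neighborhood of each lc center the pair $(Y, \Delta - \Delta_0)$ is lc with lc centers exactly those of $(Y, \Delta)$ not contained in $\mathrm{Supp}(\Delta_0)$. Because $\mathrm{Supp}(\Delta_1)$ contains no lc center of $(Y, \Delta)$, in a neighborhood of each surviving lc center the pair $(Y, \Delta_\epsilon)$ coincides with $(Y, \Delta - \Delta_0)$, and openness of log canonicity gives that $(Y, \Delta_\epsilon)$ is globally lc with the prescribed lc centers for all sufficiently small $\epsilon > 0$.

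Since $\phi$ is a local isomorphism at the lc centers of $(Y, \Delta)$, it identifies $(Y, \Delta_\epsilon)$ with $(Y', B)$, where $B := \phi_{*}\Delta_\epsilon$, on a Zariski open containing all surviving lc centers, so $(Y', B)$ is already lc near the image of each surviving lc center with that image as an lc center. To arrange the required linear equivalence I would look for an effective $\mathbb{Q}$-divisor $N' \sim_{S, \mathbb{Q}} -K_{Y'} - B - \delta H'$ for some small $\delta > 0$, general enough that $\mathrm{Supp}(N')$ avoids the finitely many images of surviving lc centers, and set $\Delta' := B + N'$. Existence of such $N'$ reduces to the $g'$-bigness of $-K_{Y'} - B - \delta H'$: starting from the $g$-ample class $-(K_Y+\Delta)$ and adjusting by the effective $\Delta_0$ and the small perturbation $-\epsilon \Delta_1$, the class $-(K_Y+\Delta_\epsilon)$ is $g$-big; pushing forward effective representatives under the birational contraction $\phi$ (which preserves $\mathbb{Q}$-linear equivalence of Weil divisors) yields $g'$-bigness of $-(K_{Y'}+B)$, and subtracting $\delta H'$ for small $\delta$ preserves bigness. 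A Bertini-type general-position argument in characteristic zero then produces $N'$ avoiding the lc-center images.

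Finally, with $\Delta' = B + N'$ the inequality $\Delta' \geq \phi_{*}\Delta_\epsilon$ and the equivalence $-(K_{Y'}+\Delta') \sim_{S, \mathbb{Q}} \delta H'$ are immediate; near each image of a surviving lc center $N'$ is disjoint, so $(Y', \Delta')$ is locally identified with $(Y, \Delta_\epsilon)$ through $\phi$ and retains the lc property with the correct lc center; away from those images $N'$ is a general small effective divisor on the klt locus, so no new lc centers appear. The main difficulty will be establishing the $g'$-bigness of $-K_{Y'} - B - \delta H'$ with enough flexibility in the linear system to choose $N'$ avoiding the lc-center images; this is precisely where the local-isomorphism assumption on $\phi$ near the lc centers combines with the $g$-ampleness of $-(K_Y+\Delta)$ to do the essential work, with the rest of the argument being bookkeeping on discrepancies.
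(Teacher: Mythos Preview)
Your outline has two genuine gaps that the paper's argument is specifically designed to avoid.

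First, $K_Y + \Delta_\epsilon$ with $\Delta_\epsilon = \Delta - \Delta_0 + \epsilon\Delta_1$ need not be $\mathbb{Q}$-Cartier. The hypotheses only say that $\Delta_0$ is $\mathbb{Q}$-Cartier \emph{at the lc centers}, and nothing about $\Delta_1$. So away from those centers the class $K_Y + \Delta_\epsilon$ may not be $\mathbb{Q}$-Cartier at all, and the assertion ``openness of log canonicity gives that $(Y,\Delta_\epsilon)$ is globally lc'' is not well-posed. The paper handles this by choosing an auxiliary $G \in |mH + r(\Delta_0 - \Delta_1 - A)|$ (with $H$ a $g$-ample divisor on $Y$ and $A$ the strict transform of a general $A' \in |nH'|$) that misses the lc centers, and setting $\Delta_\epsilon = \Delta - \epsilon\Delta_0 + \epsilon(\Delta_1 + A + r^{-1}G)$; then $K_Y + \Delta_\epsilon \sim_{\mathbb{Q}} K_Y + \Delta + \tfrac{\epsilon m}{r}H$ is $\mathbb{Q}$-Cartier and $-(K_Y+\Delta_\epsilon)$ stays $g$-ample.

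Second, and more seriously, your transfer of singularities to $Y'$ only works near the surviving lc centers. You assert that away from their images ``$N'$ is a general small effective divisor on the klt locus'', but you have not shown $(Y',B)$ is klt there: a birational contraction can make discrepancies worse (it may contract divisors), so lc/klt on $Y$ does not pass to $Y'$ for free. The paper's device is to first pick $\Gamma_\epsilon \in |-(K_Y+\Delta_\epsilon)|_{\mathbb{Q}}$ so that $(Y,\Delta_\epsilon+\Gamma_\epsilon)$ is lc with the same lc centers and $K_Y + \Delta_\epsilon + \Gamma_\epsilon \sim_{S,\mathbb{Q}} 0$. Then $(Y',\phi_*(\Delta_\epsilon+\Gamma_\epsilon))$ is \emph{crepant} birational to $(Y,\Delta_\epsilon+\Gamma_\epsilon)$, so lc-ness and the list of lc centers transfer globally. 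Finally one sets $\Delta' = \phi_*(\Delta_\epsilon+\Gamma_\epsilon) - \epsilon A'$, which is why $A$ was built into $\Delta_\epsilon$ from the start: this simultaneously gives $-(K_{Y'}+\Delta') \sim_{S,\mathbb{Q}} \epsilon A' \sim_{S,\mathbb{Q}} \delta H'$ with an honestly $\mathbb{Q}$-Cartier class on $Y'$, bypassing your Bertini step on $Y'$ entirely.
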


\begin{proof}
     Choose a general divisor $A' \in |nH'|$ for an integer $n \gg 0$, and let $A = \phi^{-1}_{*}A'$. Then $A$ does not contain any lc centers of $(Y, \Delta)$ since $\phi$ is a local isomorphism there. 
     
     Let $H$ be a $g$-ample divisor on $Y$. Choose an integer $r > 0$ such that $r(\Delta_0 - \Delta_1 - A)$ is a $\mathbb{Z}$-divisor that is Cartier at the lc centers of $(Y, \Delta)$. Then, for $m \gg 0$, the base locus of $|mH + r(\Delta_0 - \Delta_1 - A)|$ does not contain any lc center of $(Y, \Delta)$ (see \cite[Lem.\ 2.4]{XZ_stable_deg}). Choose a divisor $G \in |mH + r(\Delta_0 - \Delta_1 - A)|$ that does not contain any lc centers of $(Y, \Delta)$, and consider 
     \begin{equation*}
         \Delta_{\epsilon} \coloneqq \Delta - \epsilon\Delta_0 + \epsilon(\Delta_1 + A + r^{-1}G). 
     \end{equation*}
     for $\epsilon \in \mathbb{Q}_{>0}$. Then 
     \begin{equation*}
         K_Y + \Delta_{\epsilon} \sim_{\mathbb{Q}} K_X + \Delta + \frac{\epsilon m}{r}H. 
     \end{equation*}
     In particular, $K_Y + \Delta_{\epsilon}$ is $\mathbb{Q}$-Cartier. Thus, for $\epsilon \ll 1$, we have $\Delta_{\epsilon} \geq \Delta - \Delta_0 + \epsilon (\Delta_1 + A)$, $(Y, \Delta_{\epsilon})$ is lc with lc centers those of $(Y, \Delta)$ that are not contained in $\mathrm{Supp}(\Delta_0)$, and $-(K_Y + \Delta_{\epsilon})$ is $g$-ample. Then we can choose $\Gamma_{\epsilon} \in |{-(K_Y + \Delta_{\epsilon})}|_{\mathbb{Q}}$ such that $(Y, \Delta_{\epsilon} + \Gamma_{\epsilon})$ is lc with the same lc centers as $(Y, \Delta_{\epsilon})$. 

     Then $(Y', \phi_{*}(\Delta_{\epsilon} + \Gamma_{\epsilon}))$ is lc since it is crepant birational to $(Y, \Delta_{\epsilon} + \Gamma_{\epsilon})$, and the lc centers of it are the images of the lc centers of $(Y, \Delta_{\epsilon})$. Let $\Delta' = \phi_{*}(\Delta_{\epsilon} + \Gamma_{\epsilon}) - \epsilon A'$, then $(Y', \Delta')$ is lc with the desired lc centers, and $-(K_{Y'} + \Delta') \sim_{S, \mathbb{Q}} \epsilon A' \sim_{S, \mathbb{Q}} (\epsilon/n)H'$. 
\end{proof}

The following is the key lemma. We show that for any $g$-pseudo-effective $\mathbb{Q}$-Cartier $\mathbb{Z}$-divisor $L$ on $Y$, we replace $Y$ with a birational model $Y'$ where we have the desired vanishing of higher cohomology. 

\begin{lem} \label{pass_to_sa_model}
    Let $(Y, \Delta)$ be an lc pair, $E_1, \ldots, E_r$ be reduced divisors on $Y$ with $E = \sum_{i = 1}^{r} E_i = \lfloor \Delta \rfloor$, and $g \colon Y \to S$ be a proper morphism such that $-(K_Y + \Delta)$ is $g$-ample. Assume every lc center of $(Y, \Delta)$ is contained in $\mathrm{Supp}(E)$, and each $E_i$ is $\mathbb{Q}$-Cartier at the lc centers. Suppose $L$ is a $g$-pseudo-effective $\mathbb{Q}$-Cartier $\mathbb{Z}$-divisor on $Y$. Then there exists a birational contraction over $S$ 
    \begin{equation*}
        \begin{tikzcd}
            Y \ar[rr, "\phi", dashed] \ar[rd, "g"'] & & Y' \ar[ld, "g'"] \\
            & S &
        \end{tikzcd}
    \end{equation*}
    such that the following hold: 
    \begin{enumerate}[label=\emph{(\arabic*)}, nosep]
        \item $L' \coloneqq \phi_{*}L$ is $g'$-semi-ample. 
        \item $E'_i = \phi_{*}E_i$ is $\mathbb{Q}$-Cartier for all $i = 1, \ldots, r$. 
        \item There exists an effective $\mathbb{Q}$-divisor $D'$ on $Y'$ such that $(Y', D')$ is lc, $E' = \sum_{i=1}^{r} E_i' = \lfloor D' \rfloor$, and $\phi$ is a local isomorphism over every lc center of $(Y', D')$. 
        \item Let $F = E_{i_1} + \cdots + E_{i_s}$ for a subset $\{i_1, \ldots, i_s\} \subset \{1, \ldots, r\}$, and $F' = \phi_{*}F$. Then 
        \begin{equation*}
            H^0(Y', \mathscr{O}_{Y'}(L' - F')) \simeq H^0(Y, \mathscr{O}_{Y}(L - F))
        \end{equation*}
         and $H^q(Y', \mathscr{O}_{Y'}(L' - F')) = 0$ for all $q > 0$. 
        \item The generic points of $Z' \coloneqq \bigcap_{i=1}^{r} E_i'$ are images of the generic points of $Z \coloneqq \bigcap_{i=1}^{r} E_i$ that are not contained in the stable base locus $\mathbf{B}(L)$, and $\phi$ is a local isomorphism at these points. 
    \end{enumerate}
\end{lem}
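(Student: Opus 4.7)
The plan is to find $Y'$ by running a suitable MMP on a klt, Fano-type perturbation of $(Y, \Delta)$, and then to verify each conclusion using Lemma \ref{adjust_coeff_of_Fano_type_model}, Lemma \ref{compare_sections}, and a relative Kawamata--Viehweg vanishing for lc pairs.

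After a small $\mathbb{Q}$-factorialization of $Y$ (which does not affect sections or the listed conclusions), I would assume $Y$ is $\mathbb{Q}$-factorial, so each $E_i$ is $\mathbb{Q}$-Cartier. A first application of Lemma \ref{adjust_coeff_of_Fano_type_model} with $\phi = \mathrm{id}_Y$ and $\Delta_0 = \lfloor \Delta \rfloor = E$ gives a klt pair $(Y, \widetilde{\Delta})$ with $-(K_Y + \widetilde{\Delta}) \sim_{S, \mathbb{Q}} \delta H$ being $g$-ample, exhibiting $Y$ as of Fano type over $S$. Since $L - K_Y \sim_{S, \mathbb{Q}} L + \widetilde{\Delta} + \delta H$ is $g$-big, Kodaira's lemma yields an effective $\Theta \sim_{S, \mathbb{Q}} L - K_Y$ with $(Y, \Theta)$ klt, and by \cite{BCHM} the $(K_Y + \Theta)$-MMP with scaling terminates with a good minimal model $\phi \colon Y \dashrightarrow Y'$, proving (1). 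Preservation of $\mathbb{Q}$-factoriality under MMP then gives (2). Every contracted extremal ray is $(K_Y + \Theta)$-negative, hence $L$-negative, so the exceptional locus of $\phi$ lies in $\mathbf{B}(L)$; this proves (5) and shows that $\phi$ is a local isomorphism at each generic point of an lc center of $(Y, \Delta)$ outside $\mathbf{B}(L)$.

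For (3), I would preliminarily apply Lemma \ref{adjust_coeff_of_Fano_type_model} with $\phi = \mathrm{id}$ and $\Delta_0 = \sum_{i \in I_0} E_i$, where $I_0$ is chosen so that $\bigcup_{i \in I_0} E_i$ contains exactly the lc centers of $(Y, \Delta)$ inside $\mathbf{B}(L)$. The resulting $(Y, \Delta^{\sharp})$ then has lc centers equal to the ones outside $\mathbf{B}(L)$, and by the previous paragraph $\phi$ is a local isomorphism at each. A second application of Lemma \ref{adjust_coeff_of_Fano_type_model} to $\phi \colon (Y, \Delta^{\sharp}) \dashrightarrow Y'$ then produces the required $D'$, and I would arrange $\lfloor D' \rfloor = E'$ by choosing the perturbation $\Delta_1$ in Lemma \ref{adjust_coeff_of_Fano_type_model} so that it adds only components disjoint from $E'$. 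For (4), the identification $H^0(Y, \mathscr{O}_Y(L - F)) \simeq H^0(Y', \mathscr{O}_{Y'}(L' - F'))$ follows from Lemma \ref{compare_sections} once one strips from $F$ the $\phi$-exceptional components (which lie in $\mathbf{B}(L)$ and contribute no sections). For the higher cohomology, observe that $(Y', D' - F')$ is lc with
\[
(L' - F') - (K_{Y'} + D' - F') \sim_{S, \mathbb{Q}} L' + \delta H',
\]
which is $g'$-ample as the sum of the $g'$-semi-ample $L'$ and the $g'$-ample $\delta H'$; a relative Kawamata--Viehweg vanishing for lc pairs (as in \cite{Murayama_vanishing}) then gives $R^i g'_{*} \mathscr{O}_{Y'}(L' - F') = 0$ for $i > 0$.

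The hard part is the careful bookkeeping of lc centers under the MMP: one must verify that $\Delta_0$ in the construction of (3) can be chosen so that its support is precisely what is needed to remove the lc centers inside $\mathbf{B}(L)$, which is subtle if some lc center $Z_J = \bigcap_{i \in J} E_i$ lies in $\mathbf{B}(L)$ while no individual $E_i$ for $i \in J$ does -- then $\Delta_0$ must be enlarged beyond a simple union of $E_i$'s, possibly by iterating Lemma \ref{adjust_coeff_of_Fano_type_model}. The other delicate point is selecting and invoking the correct lc form of relative Kawamata--Viehweg vanishing for (4), since the pair $(Y', D' - F')$ has nontrivial integer part $E' - F'$.
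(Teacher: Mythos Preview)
Your overall strategy and the arguments for (1), (2), (5), and the $H^0$ part of (4) are sound. The genuine gap is in (3), and the difficulty you flag at the end is fatal to your approach, not merely bookkeeping. To apply Lemma~\ref{adjust_coeff_of_Fano_type_model} to $\phi$ you must first produce a boundary $\Delta^{\sharp}$ on $Y$ whose lc centers all lie outside $\mathbf{B}(L)$, but your recipe $\Delta_0 = \sum_{i \in I_0} E_i$ cannot achieve this in general. If a deeper stratum $Z_J = \bigcap_{i \in J} E_i$ lies in $\mathbf{B}(L)$ while every $E_i$ with $i \in J$ meets the complement of $\mathbf{B}(L)$, then any divisorial $\Delta_0 \leq \Delta$ whose support contains $Z_J$ must contain some $E_i$ with $i \in J$; subtracting it drops the coefficient of $E_i$ below $1$, so its generic point is no longer an lc center of $(Y, \Delta^{\sharp})$. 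After pushing forward, the coefficient of $E_i'$ in the resulting $D'$ is strictly less than $1$, yet $E_i' \neq 0$ since the generic point of $E_i$ lies outside $\mathbf{B}(L)$ and hence outside the exceptional locus of $\phi$. Thus $\lfloor D' \rfloor \neq E'$. Iterating Lemma~\ref{adjust_coeff_of_Fano_type_model} does not help: the lemma only removes lc centers along \emph{divisorial} supports.

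The paper avoids this by building the lc structure into the MMP rather than transporting it afterward. Since $-(K_Y + \Delta)$ is $g$-ample, for small $\epsilon \in \mathbb{Q}_{>0}$ so is $\epsilon L - (K_Y + \Delta)$; a general $G \in |\epsilon L - (K_Y + \Delta)|_{\mathbb{Q}}$ makes $(Y, \Delta + G)$ lc with the \emph{same} lc centers as $(Y, \Delta)$, and $K_Y + \Delta + G \sim_{S, \mathbb{Q}} \epsilon L$. The $L$-MMP is then a $(K_Y + \Delta + G)$-MMP, so by \cite[Cor.~1.23]{Kollar_singularity} the pushforward pair on $Y^{\mathrm{m}}$ is lc and the map is automatically a local isomorphism over each of its lc centers; taking $D' = \phi_*(\Delta + G)$ (after the small modification making the $E_i'$ $\mathbb{Q}$-Cartier) gives (3) with $\lfloor D' \rfloor = E'$ and no center-by-center analysis. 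For (4) the paper then observes that the model $Y'$ depends only on $L$, so one may first replace $\Delta$ by a perturbation with $\lfloor \Delta \rfloor = F$ via Lemma~\ref{adjust_coeff_of_Fano_type_model} and reduce to $F = E$; then $(Y', D' - E')$ is \emph{klt} and $L' - E' \sim_{S,\mathbb{Q}} K_{Y'} + (\Delta' - E') + G' + (1-\epsilon)L'$ with $G'$ $g'$-big, so ordinary Kawamata--Viehweg applies. Your proposed lc vanishing route would also work once (3) is repaired, but it invokes a stronger theorem than the paper needs.
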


\begin{proof}
     By Lemma \ref{adjust_coeff_of_Fano_type_model} $Y$ is of klt Fano type over $S$. Hence we can run an $L$-MMP over $S$, which yields a semi-ample model for $L$ over $S$: 
     \begin{equation*}
        \begin{tikzcd}
            Y \ar[rr, "\psi", dashed] \ar[rd, "g"'] & & Y^{\mathrm{m}} \ar[ld, "g^{\mathrm{m}}"] \\
            & S &
        \end{tikzcd}
    \end{equation*}
    Note that $Y^{\mathrm{m}}$ is of klt type. Hence there exists a small birational modification $\pi \colon Y' \to Y^{\mathrm{m}}$ such that $E'_i \coloneqq \pi^{-1}_{*}E^{\mathrm{m}}_i$ is $\mathbb{Q}$-Cartier, and $\pi$ is a local isomorphism over the locus where every $E_i^{\mathrm{m}}$ is $\mathbb{Q}$-Cartier (see \cite[Lem.\ 4.7]{Zhuang_boundednessI}). We will show $\phi = \pi^{-1} \circ \psi \colon Y \dashrightarrow Y'$ satisfies the conditions. By the construction, (1) and (2) hold. 
    
    Choose a sufficiently small $\epsilon \in \mathbb{Q}_{>0}$ such that $\epsilon L - (K_Y + \Delta)$ is $g$-ample, and $G \in |\epsilon L - (K_Y + \Delta)|_{\mathbb{Q}}$ such that $(Y, \Delta + G)$ is lc with the same lc centers as $(Y, \Delta)$. Thus
    \begin{equation*}
        K_Y + \Delta + G \sim_{S, \mathbb{Q}} \epsilon L. 
    \end{equation*}
    Then $\psi$ is given by a $(K_Y + \Delta + G)$-MMP, so $(Y^{\mathrm{m}}, \Delta^{\mathrm{m}} + G^{\mathrm{m}})$ is lc, where $\Delta^{\mathrm{m}} = \psi_{*}\Delta$ and $G^{\mathrm{m}} = \psi_{*}G$, and $\psi$ is a local isomorphism over the lc centers of $(Y^{\mathrm{m}}, \Delta^{\mathrm{m}} + G^{\mathrm{m}})$ (see \cite[Cor.\ 1.23]{Kollar_singularity}). Then $\pi$ is also a local isomorphism over these lc centers since every $E^{\mathrm{m}}_i = \psi_{*}E_i$ is $\mathbb{Q}$-Cartier at these points. Let $\Delta' = \pi^{-1}_{*}\Delta^{\mathrm{m}}$ and $G' = \pi^{-1}_{*}G^{\mathrm{m}}$. Then $(Y', \Delta' + G')$ is lc, whose lc centers are the preimages of lc centers of $(Y^{\mathrm{m}}, \Delta^{\mathrm{m}} + G^{\mathrm{m}})$. Thus (3) holds with $D' = \Delta' + G'$. 

    To show (4), we may assume that $F = E$ by Lemma \ref{adjust_coeff_of_Fano_type_model}. Write $F = F_0 + F_1$ such that $F_0$ has no $\phi$-exceptional components, and $F_1$ is $\phi$-exceptional, so that $F' = \phi_{*}F_0$. Since $\phi$ is $L$-non-positive, 
    \begin{equation*}
        H^0(Y', \mathscr{O}_{Y'}(L' - F')) \simeq H^0(Y, \mathscr{O}_{Y}(L - F_0))
    \end{equation*}
    by Lemma \ref{compare_sections}. Every component of $F_1$ is contracted by $\phi$, hence is in the base locus of $L$ by the MMP. Thus, 
    \begin{equation*}
        H^0(Y', \mathscr{O}_{Y'}(L' - F')) \simeq H^0(Y, \mathscr{O}_{Y}(L - F_0)) = H^0(Y, \mathscr{O}_{Y}(L - F)). 
    \end{equation*}
    Since the lc centers of $(Y', \Delta' + G')$ are contained in $\mathrm{Supp}(E')$ and $E'$ is $\mathbb{Q}$-Cartier, $(Y', \Delta_0' + G')$ is klt, where $\Delta_0' = \Delta' - E' \geq 0$. We have 
    \begin{equation*}
        L' - E' \sim_{S, \mathbb{Q}} K_{Y'} + \Delta_0' + G' + (1-\epsilon)L'. 
    \end{equation*} 
    Note that $G' = \phi_{*}G$ is $g'$-big, hence we may write $G' \sim_{S, \mathbb{Q}} A' + B'$ such that $A'$ is $g'$-ample and $B' \geq 0$. Then 
    \begin{equation*}
        L' - E' \sim_{S, \mathbb{Q}} K_{Y'} + \Delta_0' + (1-t)G' + tB' + tA' + (1-\epsilon)L', 
    \end{equation*}
    where $(Y', \Delta_0' + (1-t)G' + tB')$ is klt for $0 < t \ll 1$, and $tA' + (1-\epsilon)L'$ is $g'$-ample. Thus 
    \begin{equation*}
        H^q(Y', \mathscr{O}_{Y'}(L' - F')) = 0
    \end{equation*}
    for all $q > 0$ by the Kawamata--Viehweg vanishing theorem. 

    Let $\zeta \in Z$ be a generic point. Then the $L$-MMP $\psi \colon Y \dashrightarrow Y^{\mathrm{m}}$ is a local isomorphism at $\zeta$ if and only if $\zeta \notin \mathbf{B}(L)$. Note that $\zeta$ is an lc center of $(Y, \Delta)$, hence each $E_i$ is $\mathbb{Q}$-Cartier at $\zeta$ by our assumptions. If $\psi$ is a local isomorphism at $\zeta$, then each $E_i^{\mathrm{m}}$ is $\mathbb{Q}$-Cartier at $\psi(\zeta)$, hence $\pi \colon Y' \to Y^{\mathrm{m}}$ is also a local isomorphism over $\psi(\zeta)$ by the construction. Suppose $\zeta'$ is a generic point of $Z'$, then it is an lc center of $(Y', D')$, so $\phi$ is a local isomorphism over $\zeta'$ by the condition (3). Then $\phi^{-1}(\zeta')$ is a generic point of $Z$. Thus, $\phi$ gives a bijection between the generic points of $Z$ that are not contained in $\mathbf{B}(L)$ and the generic points of $Z'$; moreover, $\phi$ is a local isomorphism at these points. 
\end{proof}

\subsection{The flatness of extended Rees modules} As mentioned in the Introduction, vanishing of higher cohomology allow us to deduce exactness of a Koszul complex, which implies the flatness of the extended Rees modules. We also keep track of the associated graded module. 

\begin{lem} \label{Koszul_exact_on_sections}
    Let $(Y, \Delta)$ be an lc pair, $E_1, \ldots, E_r$ be reduced divisors on $Y$ with $E = \sum_{i = 1}^{r} E_i = \lfloor \Delta \rfloor$, and $g \colon Y \to S$ be a proper morphism such that $-(K_Y + \Delta)$ is $g$-ample. Assume every lc center of $(Y, \Delta)$ is contained in $\mathrm{Supp}(E)$, and each $E_i$ is $\mathbb{Q}$-Cartier at the lc centers. Suppose $L$ is a $\mathbb{Q}$-Cartier $\mathbb{Z}$-divisor on $Y$. Let $Z = \bigcap_{i=1}^{r} E_i$, and 
    \begin{equation*}
        C_{\bullet} = C_{\bullet}\left( H^0(Y, \mathscr{O}_{Y}(L)); (H^0(Y, \mathscr{O}_Y(L - E_i)))_{i=1, \ldots, r} \right). 
    \end{equation*}
    denote the complex in Definition \ref{the_complex_C}. Then the following hold: 
    \begin{enumerate}[label=\emph{(\arabic*)}, nosep]
        \item $H_p(C_{\bullet}) = 0$ for all $p > 0$. 
        \item Let $Z^{(0, L)}$ denote the set of generic points $\zeta$ of $Z$ such that $\zeta \notin \mathbf{B}(L)$ and $L$ is Cartier at $\zeta$. The canonical map 
        \begin{equation*}
            H_0(C_{\bullet}) = H^0(Y, \mathscr{O}_{Y}(L)) \bigg/ \sum_{i=1}^r H^0(Y, \mathscr{O}_{Y}(L - E_i)) \to \prod_{\zeta \in Z^{(0, L)}} \mathscr{O}_Y(L) \otimes \kappa(\zeta), 
        \end{equation*}
        induced by the restrictions $H^0(Y, \mathscr{O}_Y(L)) \to \mathscr{O}_Y(L) \otimes \kappa(\zeta)$, is injective. 
    \end{enumerate}
\end{lem}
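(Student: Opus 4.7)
The plan is to push everything onto a birational model of $Y$ where $L$ becomes semi-ample, apply a Koszul-type argument together with Kawamata--Viehweg vanishing, and then read off injectivity in (2) from the Cohen--Macaulay structure of the cokernel sheaf.

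First, if $L$ is not $g$-pseudo-effective then a nonzero section of $\mathscr{O}_Y(L)$ would realize $L$ as linearly equivalent to an effective divisor; since $\mathscr{O}_Y(L - F) \subset \mathscr{O}_Y(L)$ for any effective $F \leq E$, every term of $C_\bullet$ is zero and (1), (2) are trivial. Assume $L$ is $g$-pseudo-effective. I invoke Lemma \ref{pass_to_sa_model} to obtain a birational contraction $\phi \colon Y \dashrightarrow Y'$ over $S$ such that $L' = \phi_* L$ is $g'$-semi-ample, each $E_i' = \phi_* E_i$ is $\mathbb{Q}$-Cartier, and $(Y', D')$ is an lc pair with $\lfloor D' \rfloor = E'$ over whose lc centers $\phi$ is a local isomorphism. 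By item (4) of that lemma, $C_\bullet$ is canonically the complex of global sections $H^0(Y', \mathscr{C}_\bullet)$ of the sheaf-level complex
\begin{equation*}
    \mathscr{C}_\bullet = C_\bullet\bigl(\mathscr{O}_{Y'}(L'); (\mathscr{O}_{Y'}(L' - E_i'))_{i=1,\ldots,r}\bigr),
\end{equation*}
and each term $\mathscr{C}_p$ is $H^{>0}$-acyclic on $Y'$.

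Next, I apply Lemma \ref{Koszul_complex_div} to $(Y', D')$ with divisors $E_1', \ldots, E_r'$ (noting that $Y'$ is of klt type because $(Y', D' - \epsilon E')$ is klt for $0 < \epsilon \ll 1$, using $\lfloor D' \rfloor = E'$ and the $\mathbb{Q}$-Cartier property of the $E_i'$). This gives $H_p(\mathscr{C}_\bullet) = 0$ for $p > 0$ and $H_0(\mathscr{C}_\bullet) \simeq \mathscr{L}'$, the cokernel sheaf of Lemma \ref{res_of_divisor_to_center}. Hence $\mathscr{C}_\bullet$ is a left resolution of $\mathscr{L}'$ by $H^{>0}$-acyclic sheaves, and the standard hypercohomology spectral sequence collapses on the $H^0$ row to yield $H_p(C_\bullet) \simeq H^{-p}(Y', \mathscr{L}')$. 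For $p > 0$ the right-hand side vanishes because $\mathscr{L}'$ is a single sheaf, proving (1); for $p = 0$ one gets $H_0(C_\bullet) = H^0(Y', \mathscr{L}')$.

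For (2), Lemma \ref{res_of_divisor_to_center} asserts that $\mathscr{L}'$ is Cohen--Macaulay of pure codimension $r$ in $Y'$, hence satisfies Serre's $S_1$ condition and has associated points only at the generic points of its support. Consequently the natural map from $H^0(Y', \mathscr{L}')$ into the product of stalks at those points is injective. By part (2) of Lemma \ref{res_of_divisor_to_center}, the stalk at a generic point $\zeta'$ of $Z'$ is $\mathscr{O}_{Y'}(L') \otimes \kappa(\zeta')$ when $L'$ is Cartier at $\zeta'$ and zero otherwise; part (5) of Lemma \ref{pass_to_sa_model} then identifies these $\zeta'$ in bijection with $\zeta \in Z^{(0,L)}$ via $\phi$, and because $\phi$ is a local isomorphism there, the stalks match those in the statement. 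The main obstacle in the whole scheme is not any individual homological step but the careful transfer of structure between $Y$ and $Y'$ — the lc-center set, $\mathbb{Q}$-Cartier-ness of each $E_i'$, the bijection of relevant generic points, and the stalk identifications — which is exactly what Lemma \ref{pass_to_sa_model} has been engineered to handle.
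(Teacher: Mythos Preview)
Your proof is correct and follows essentially the same approach as the paper: reduce to the pseudo-effective case, pass to the semi-ample model $Y'$ via Lemma \ref{pass_to_sa_model}, use Lemma \ref{Koszul_complex_div} to see that $\mathscr{C}_\bullet$ resolves $\mathscr{L}'$, collapse the hypercohomology spectral sequence using the vanishing from Lemma \ref{pass_to_sa_model}(4), and then read off (2) from the Cohen--Macaulay structure of $\mathscr{L}'$ given by Lemma \ref{res_of_divisor_to_center} together with the generic-point bijection in Lemma \ref{pass_to_sa_model}(5). Your explicit justification that $Y'$ is of klt type (needed for Lemmas \ref{Koszul_complex_div} and \ref{res_of_divisor_to_center}) is a point the paper leaves implicit.
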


\begin{proof}
    If $L$ is not $g$-pseudo-effective, then $H^0(Y, \mathscr{O}_Y(L)) = 0$, so the conclusion is trivial. Assume that $L$ is $g$-pseudo-effective. Let 
    \begin{equation*}
        \begin{tikzcd}
            Y \ar[rr, "\phi", dashed] \ar[rd, "g"'] & & Y' \ar[ld, "g'"] \\
            & S &
        \end{tikzcd}
    \end{equation*}
    be the birational contraction given by Lemma \ref{pass_to_sa_model}, with $L' = \phi_{*}L$ and $E'_i = \phi_{*}E_i$. Note that each $C_p$ is a direct sum of terms of the form 
    \begin{equation*}
        H^0(Y, \mathscr{O}_Y(L - E_{i_1})) \cap \cdots \cap H^0(Y, \mathscr{O}_Y(L - E_{i_p})) = H^0(Y, \mathscr{O}_Y(L - E_{i_1} - \cdots - E_{i_p})). 
    \end{equation*}
    Thus, since $H^0(Y, \mathscr{O}_Y(L - E_{i_1} - \cdots - E_{i_p})) \simeq H^0(Y', \mathscr{O}_{Y'}(L' - E'_{i_1} - \cdots - E'_{i_p}))$ by Lemma \ref{pass_to_sa_model}, we have 
    \begin{equation*}
        C_{\bullet} = C_{\bullet}\left( H^0(Y', \mathscr{O}_{Y'}(L')); (H^0(Y', \mathscr{O}_{Y'}(L' - E'_i)))_{i} \right). 
    \end{equation*}
    Let $\mathscr{C}_{\bullet} = C_{\bullet}(\mathscr{O}_{Y'}(L'); (\mathscr{O}_{Y'}(L' - E'_i))_{i})$. There is a second-quadrant spectral sequence 
    \begin{equation*}
        E_1^{p,q} = H^{q}(Y', \mathscr{C}_{-p}) \Rightarrow \mathbb{H}^{p+q}(Y', \mathscr{C}_{\bullet})
    \end{equation*}
    Since $H^q(Y', \mathscr{C}_{-p}) = 0$ for all $p$ and $q > 0$ by Lemma \ref{pass_to_sa_model}, and $K_p = H^0(Y', \mathscr{C}_p)$, we have 
    \begin{equation*}
        H_p(C_{\bullet}) \simeq \mathbb{H}^{-p}(Y', \mathscr{C}_{\bullet}). 
    \end{equation*}
    By Lemma \ref{Koszul_complex_div}, $H_p(\mathscr{C}_{\bullet}) = 0$ for $p > 0$, so $\mathscr{C}_{\bullet}$ is quasi-isomorphic to the sheaf 
    \begin{equation*}
        \mathscr{L}' = \mathscr{O}_{Y'}(L') \bigg/ \sum_{i=1}^r \mathscr{O}_{Y'}(L' - E'_i). 
    \end{equation*}
    Thus $\mathbb{H}^{-p}(Y', \mathscr{C}_{\bullet}) \simeq H^{-p}(Y', \mathscr{L}') = 0$ for all $p > 0$, since a sheaf has no cohomology in negative degrees. This shows (1). 

    Moreover, $H_0(C_{\bullet}) = H^0(Y', \mathscr{L}')$. Let $Z' = \bigcap_{i=1}^{r} E'_i$, and $Z'^{(0, L')}$ denote the set of generic points of $Z'$ where $L'$ is Cartier. By Lemma \ref{res_of_divisor_to_center}, $\mathscr{L}'$ is a Cohen--Macaulay module such that the generic points of $\mathrm{Supp}(\mathscr{L}')$ are exactly $Z'^{(0, L')}$, and $\mathscr{L}'_{\zeta'} = \mathscr{O}_{Y'}(L') \otimes \kappa(\zeta')$ for every $\zeta' \in Z'^{(0, L')}$. Then the canonical restriction map 
    \begin{equation*}
        H^0(Y', \mathscr{L}') \to \prod_{\zeta' \in Z'^{(0, L')}} \mathscr{L}'_{\zeta'} = \prod_{\zeta' \in Z'^{(0, L')}} \mathscr{O}_{Y'}(L') \otimes \kappa(\zeta')
    \end{equation*}
    is injective. By Lemma \ref{pass_to_sa_model}, $Z'^{(0,L')}$ is the image of $Z^{(0,L)}$, and $\phi$ is a local isomorphism at these points. Thus the map above is identified with the map 
    \begin{equation*}
        H_0(C_{\bullet}) = H^0(Y, \mathscr{O}_{Y}(L)) \bigg/ \sum_{i=1}^r H^0(Y, \mathscr{O}_{Y}(L - E_i)) \to \prod_{\zeta \in Z^{(0, L)}} \mathscr{O}_Y(L) \otimes \kappa(\zeta)
    \end{equation*}
    induced by restrictions. This shows (2). 
\end{proof}

\begin{thm} \label{Rees_module_flat}
    Let $(Y, \Delta)$ be an lc pair, $E_1, \ldots, E_r$ be reduced divisors on $Y$ with $E = \sum_{i = 1}^{r} E_i = \lfloor \Delta \rfloor$, and $g \colon Y \to S$ be a proper morphism such that $-(K_Y + \Delta)$ is $g$-ample. Assume every lc center of $(Y, \Delta)$ is contained in $\mathrm{Supp}(E)$, and each $E_i$ is $\mathbb{Q}$-Cartier at the lc centers. For each $m = (m_1, \ldots, m_r) \in \mathbb{Z}^r$, write 
    \begin{equation*}
        E(m) \coloneqq \sum_{i=1}^{r} \max(m_i, 0) E_i. 
    \end{equation*}
    Suppose $L$ is a $\mathbb{Q}$-Cartier $\mathbb{Z}$-divisor on $Y$. Then the module 
    \begin{equation*}
        M = \bigoplus_{m \in \mathbb{Z}^r} H^0(Y, \mathscr{O}_Y(L - E(m))) t_1^{-m_1} \cdots t_r^{-m_r} \subset H^0(Y, \mathscr{O}_Y(L))[t_1^{\pm 1}, \ldots, t_r^{\pm 1}]
    \end{equation*}
    is flat over $\Bbbk[t_1, \ldots, t_r]$. 
\end{thm}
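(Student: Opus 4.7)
The plan is to identify $M$ as an extended Rees module and then invoke the flatness criterion Lemma \ref{flatness_criterion_of_Rees}. On $N := H^0(Y, \mathscr{O}_Y(L))$, define for each $i = 1, \ldots, r$ the decreasing $\mathbb{Z}$-valued filtration $\mathcal{F}_i^k N := H^0(Y, \mathscr{O}_Y(L - \max(k, 0) E_i))$ for $k \in \mathbb{Z}$. Since the reduced divisors $E_1, \ldots, E_r$ share no common components, the characterization of sections by $\mathrm{div}(s) + L \geq \max(m_i, 0) E_i$ gives
$$\bigcap_{i=1}^{r} \mathcal{F}_i^{m_i} N \; = \; H^0\bigl(Y, \mathscr{O}_Y(L - E(m))\bigr)$$
for every $m \in \mathbb{Z}^r$, identifying $M = \mathrm{Rees}_{\mathcal{F}_1, \ldots, \mathcal{F}_r}(N)$.

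By Lemma \ref{flatness_criterion_of_Rees}, I need to show that for every $m \in \mathbb{Z}^r$ and every subset $I \subset \{1, \ldots, r\}$ the complex
$$C_{\bullet}\bigl(H^0(Y, \mathscr{O}_Y(L-E(m))); \, (M_{m,i})_{i \in I}\bigr)$$
is acyclic in positive degrees, where $M_{m,i}$ denotes the intersection with $\mathcal{F}_i^{m_i+1}N$ in place of $\mathcal{F}_i^{m_i}N$. A direct computation gives $M_{m,i} = H^0(Y, \mathscr{O}_Y(L - E(m) - E_i))$ when $m_i \geq 0$, whereas $M_{m,i}$ equals the ambient module $H^0(Y, \mathscr{O}_Y(L - E(m)))$ when $m_i < 0$. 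If some $i \in I$ has $m_i < 0$, then $C_{\bullet}$ is the mapping cone of the identity map on the smaller Koszul-type complex obtained by deleting $i$ from $I$, and is therefore acyclic; so I may reduce to the case $m_i \geq 0$ for every $i \in I$.

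Setting $L' := L - E(m)$, which remains a $\mathbb{Q}$-Cartier $\mathbb{Z}$-divisor, the remaining task is to show
$$C_{\bullet}\bigl(H^0(Y, \mathscr{O}_Y(L')); \, (H^0(Y, \mathscr{O}_Y(L' - E_i)))_{i \in I}\bigr)$$
is acyclic in positive degrees. Although Lemma \ref{Koszul_exact_on_sections}(1) is formally stated for the full index set $\{1, \ldots, r\}$, its proof applies verbatim to any subset $I$: Lemma \ref{pass_to_sa_model}(4) provides the section-preservation and higher-cohomology vanishing for any subset $\{i_1, \ldots, i_s\}$, and Lemma \ref{Koszul_complex_div} yields the sheaf-level Koszul exactness (its hypothesis $\sum_{i \in I} E_i \leq \lfloor \Delta \rfloor$ holds trivially for any subset). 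The hypercohomology spectral sequence applied to the sheafified complex then delivers the desired acyclicity on global sections.

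The hard work has been offloaded to Lemma \ref{pass_to_sa_model}: the MMP-based construction of a semi-ample model for $L'$ that simultaneously preserves sections along every sub-divisor of $E$ and yields the required Kawamata--Viehweg vanishing. Granted that lemma, the present theorem is essentially bookkeeping---the identification of the extended Rees module, the mapping-cone trick to discard indices with $m_i < 0$, and the invocation of the preceding Koszul-type exactness for an arbitrary subset of $\{1, \ldots, r\}$.
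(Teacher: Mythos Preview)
Your argument has one real gap. You claim that $L' = L - E(m)$ ``remains a $\mathbb{Q}$-Cartier $\mathbb{Z}$-divisor,'' but the hypothesis of the theorem only says each $E_i$ is $\mathbb{Q}$-Cartier \emph{at the lc centers}, not everywhere on $Y$. Hence $E(m)$ need not be $\mathbb{Q}$-Cartier, and neither need $L - E(m)$; without this you cannot run the $L'$-MMP in Lemma~\ref{pass_to_sa_model} or apply Lemma~\ref{Koszul_exact_on_sections} to $L'$. The paper deals with this at the very start of its proof: it uses Lemma~\ref{adjust_coeff_of_Fano_type_model} (with $\Delta_0 = E$) to see that $Y$ is of klt type, takes a small birational modification $\pi \colon Y' \to Y$ on which every $E_i'$ is globally $\mathbb{Q}$-Cartier, applies Lemma~\ref{adjust_coeff_of_Fano_type_model} again to equip $Y'$ with a boundary satisfying the same hypotheses, and invokes Lemma~\ref{compare_sections} to verify that the extended Rees module is unchanged under this replacement. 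Once that reduction is in place, your argument is sound.

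On the subset issue, your route (the proof of Lemma~\ref{Koszul_exact_on_sections} adapts to any $I$, since Lemma~\ref{pass_to_sa_model}(4) already treats arbitrary subsets and Lemma~\ref{Koszul_complex_div} only needs $\sum_{i\in I} E_i' \le \lfloor D' \rfloor$ on $Y'$) is correct and essentially equivalent to the paper's, which instead uses Lemma~\ref{adjust_coeff_of_Fano_type_model} to manufacture a new boundary $\Delta_I$ with $\lfloor \Delta_I \rfloor = \sum_{i\in I} E_i$ and $-(K_Y+\Delta_I)$ ample, so that Lemma~\ref{Koszul_exact_on_sections} applies as stated. Your mapping-cone reduction for indices with $m_i < 0$ is a tidy point the paper leaves implicit.
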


\begin{proof}
    Apply Lemma \ref{adjust_coeff_of_Fano_type_model} for $\phi = \mathrm{id}$, $\Delta_0 = E$, and $\Delta_1 = 0$, we conclude that $Y$ is of klt type. Then there is a small birational modification $\pi \colon Y' \to Y$ such that $E_i' = \pi^{-1}_{*}E_i$ is $\mathbb{Q}$-Cartier for each $i$, and $\pi$ is a local isomorphism over the locus where every $E_i$ is $\mathbb{Q}$-Cartier (see \cite[Lem.\ 4.7]{Zhuang_boundednessI}). Apply Lemma \ref{adjust_coeff_of_Fano_type_model} for $\phi = \pi^{-1}$ and $\Delta_0 = \Delta_1 = 0$, we get a pair $(Y', \Delta')$ satisfying the same conditions as in the Theorem. By Lemma \ref{compare_sections}, we may replace $(Y, \Delta)$ with $(Y', \Delta')$, hence assume every $E_i$ is $\mathbb{Q}$-Cartier. 
    
    For each $i$, let $\mathcal{F}_{E_i}$ be the filtration on $H^0(Y, \mathscr{O}_Y(L))$ given by 
    \begin{equation*}
        \mathcal{F}_{E_i}^nH^0(Y, \mathscr{O}_{Y}(L)) = \left\{ \begin{array}{ll}
            H^0(Y, \mathscr{O}_Y(L)) & \text{if}\ n \leq 0,  \\
            H^0(Y, \mathscr{O}_Y(L - nE_i)) & \text{if}\ n > 0. 
        \end{array} \right. 
    \end{equation*}
    Then $M$ is the extended Rees module for the filtrations $\mathcal{F}_{E_1}, \ldots, \mathcal{F}_{E_r}$ on $H^0(Y, \mathscr{O}_Y(L))$. By Lemma \ref{flatness_criterion_of_Rees}, it suffices to show that for every $I = \{i_1, \ldots, i_s\} \subset \{1, \ldots, r\}$ and $m \in \mathbb{Z}^r$, for the complex
    \begin{equation*}
        C_{\bullet} = C_{\bullet}\left( H^0(Y, \mathscr{O}_{Y}(L - E(m))); (H^0(Y, \mathscr{O}_Y(L - E(m) - E_i)))_{i \in I} \right)
    \end{equation*}
    as in Definition \ref{the_complex_C}, we have $H_p(C_{\bullet}) = 0$ for all $p > 0$. 

    By Lemma \ref{adjust_coeff_of_Fano_type_model}, there exists an effective $\mathbb{Q}$-divisor $\Delta_I$ on $Y$ such that $\lfloor \Delta_I \rfloor = \sum_{i \in I} E_i$, $(Y, \Delta_I)$ is lc with lc centers those of $(Y, \Delta)$ that are not contained in $\sum_{j \notin I} E_j$, and $-(K_Y + \Delta_I)$ is $g$-ample. Hence we can apply Lemma \ref{Koszul_exact_on_sections} to $E_i$ for $i \in I$ and $L - E(m)$, and conclude that $H_p(C_{\bullet}) = 0$ for all $p > 0$. 
\end{proof}

\subsection{The graded algebra} Now we deduce Theorem \ref{the_central_fiber_of_Rees_alg} from results in the previous subsection. Note that we can get ring properties of the associated graded algebra since the canonical map in Lemma \ref{Koszul_exact_on_sections}(2) is compatible with multiplications. 

\begin{thm} \label{the_central_fiber_of_Rees_alg}
    Let $(Y, \Delta)$ be an lc pair, $E_1, \ldots, E_r$ be reduced divisors on $Y$ with $E = \sum_{i = 1}^{r} E_i = \lfloor \Delta \rfloor$, and $g \colon Y \to S$ be a proper morphism such that $-(K_Y + \Delta)$ is $g$-ample. Assume every lc center of $(Y, \Delta)$ is contained in $\mathrm{Supp}(E)$, and each $E_i$ is $\mathbb{Q}$-Cartier at the lc centers. For each $m = (m_1, \ldots, m_r) \in \mathbb{Z}^r$, write 
    \begin{equation*}
        E(m) \coloneqq \sum_{i=1}^{r} \max(m_i, 0) E_i. 
    \end{equation*}
    Let $L_1, \ldots, L_s$ be $\mathbb{Q}$-Cartier $\mathbb{Z}$-divisors on $Y$, 
    \begin{equation*}
        R = R(L_1, \ldots, L_s) = \bigoplus_{n = (n_1, \ldots, n_s) \in \mathbb{N}^s}H^0(Y, \mathscr{O}_Y(n_1L_1 + \cdots + n_sL_s))
    \end{equation*}
    be the multi-section ring, and 
    \begin{equation*}
        \mathcal{R} = \bigoplus_{n \in \mathbb{N}^s} \bigoplus_{m \in \mathbb{Z}^r} H^0(Y, \mathscr{O}_Y(n_1L_1 + \cdots + n_sL_s - E(m))) t_1^{-m_1} \cdots t_r^{-m_r} \subset R[t_1^{\pm 1}, \ldots, t_r^{\pm r}]
    \end{equation*}
    be the extended Rees algebra. Then the following hold: 
    \begin{enumerate}[label=\emph{(\arabic*)}, nosep]
        \item $\mathcal{R}$ is of finite type over $A = H^0(S, \mathscr{O}_S)$. 
        \item $\mathcal{R}$ is flat over $\Bbbk[t_1, \ldots, t_r]$. 
        \item Let $Z = \bigcap_{i = 1}^{r} E_i$, with the total ring of fractions $K(Z)$. Then there is an injective ring map 
        \begin{equation*}
            \mathcal{R}/(t_1, \ldots, t_r)\mathcal{R} \hookrightarrow K(Z)[\mathbb{N}^s \times \mathbb{N}^r]. 
        \end{equation*}
        In particular, if $Z$ is irreducible, then $\mathcal{R}/(t_1, \ldots, t_r)\mathcal{R}$ is an integral domain. 
    \end{enumerate}
\end{thm}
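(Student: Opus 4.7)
The plan is to deduce the three claims from results already established in the paper: \cite[Cor.\ 1.1.9]{BCHM} for (1), Theorem \ref{Rees_module_flat} for (2), and Lemma \ref{Koszul_exact_on_sections}(2) for (3).

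For (1), first I would use Lemma \ref{adjust_coeff_of_Fano_type_model} (applied with $\Delta_0 = E$, $\Delta_1 = 0$) to produce a klt boundary on $Y$ whose negative is $g$-ample, so that $Y$ is of relative Fano type over $S$. After a small $\mathbb{Q}$-factorialization of $Y$, which does not change any of the relevant $H^0$'s by Lemma \ref{compare_sections}, we may assume each $E_i$ is globally $\mathbb{Q}$-Cartier. Then I would present $\mathcal{R}$ as generated by two graded subalgebras: $\mathcal{R}^{(-)}$, supported in degrees $\mathbb{N}^s \times (-\mathbb{N}^r)$, which coincides with the polynomial ring $R[t_1, \ldots, t_r]$; and $\mathcal{R}^{(+)}$, supported in $\mathbb{N}^s \times \mathbb{N}^r$, which is isomorphic to the multi-section ring $R(L_1, \ldots, L_s, -E_1, \ldots, -E_r)$ (with $t^{-m}$ tracking the $E$-degree). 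Any mixed-degree element $f \cdot t^{-m}$ with $m = m^+ - m^-$ factors as $(f \cdot t^{-m^+}) \cdot t^{m^-}$, one factor in each subalgebra; so this presentation is correct, and both subalgebras are finitely generated over $A$ by BCHM.

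For (2), I would observe that $\mathcal{R}$ is $\mathbb{N}^s$-graded by the $L$-degree, and its degree-$n$ component is precisely the extended Rees module for the filtrations induced on $H^0(Y, \mathscr{O}_Y(n_1L_1 + \cdots + n_sL_s))$ by the divisors $E_i$. Theorem \ref{Rees_module_flat} gives flatness of each component over $\Bbbk[t_1, \ldots, t_r]$, and a direct sum of flat modules is flat.

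For (3), a direct degree-by-degree computation shows that $\bar{\mathcal{R}} = \mathcal{R}/(t_1, \ldots, t_r)\mathcal{R}$ is supported in degrees $(n, m) \in \mathbb{N}^s \times \mathbb{N}^r$ (components at $m \notin \mathbb{N}^r$ are killed since $t^{-m}$ then contains some $t_i$ as a factor), with piece at $(n, m)$ given by
\begin{equation*}
    H^0(Y, \mathscr{O}_Y(nL - E(m))) \bigg/ \sum_{i=1}^r H^0(Y, \mathscr{O}_Y(nL - E(m) - E_i)).
\end{equation*}
Lemma \ref{Koszul_exact_on_sections}(2) then yields an injection of each such piece into $\prod_{\zeta \in Z^{(0, nL - E(m))}} \mathscr{O}_Y(nL - E(m)) \otimes \kappa(\zeta)$, and these assemble into an injective map of graded rings, since restriction to stalks at $\zeta$ is multiplicative. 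To land the target inside $K(Z)[\mathbb{N}^s \times \mathbb{N}^r]$, I would select multiplicatively compatible local generators of $\mathscr{O}_Y(nL - E(m))_\zeta$ for those $(n, m)$ where this sheaf is Cartier at $\zeta$; this is possible because $\mathop{\mathrm{Spec}}(\mathscr{O}_{Y, \zeta})$ is local with trivial Picard group and the relevant Cartier-at-$\zeta$ divisors form a finitely generated subgroup of the free abelian group of Cartier divisors at $\zeta$, which we lift to $K(Y)^\times$ to split the trivializations multiplicatively. When $Z$ is irreducible, $K(Z)$ is a field and the target is a polynomial ring over a field, hence a domain. The main obstacle lies in this last assembly: each individual graded-piece injection comes immediately from Lemma \ref{Koszul_exact_on_sections}(2), but matching the ring structures on both sides requires a consistent multiplicative choice of trivializations at each generic point of $Z$.
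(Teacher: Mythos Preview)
Your proposal is correct and follows essentially the same approach as the paper's proof: reduce to each $E_i$ being $\mathbb{Q}$-Cartier via Lemma \ref{adjust_coeff_of_Fano_type_model} and a small modification, then invoke BCHM for (1), Theorem \ref{Rees_module_flat} degree-by-degree for (2), and Lemma \ref{Koszul_exact_on_sections}(2) assembled into a graded ring map for (3). Your treatment of (1) via the decomposition $\mathcal{R} = \mathcal{R}^{(+)} \cdot \mathcal{R}^{(-)}$ is a bit more explicit than the paper's one-line ``quotient of a Cox ring'', and your handling of the ring structure in (3) by choosing multiplicatively compatible trivializations at each $\zeta$ is exactly equivalent to the paper's observation that $\bigoplus_{(n,m)\in\Sigma_\zeta}\mathscr{O}_Y(L(n,m))\otimes\kappa(\zeta)\simeq\kappa(\zeta)[\Sigma_\zeta]$ because each graded piece is one-dimensional with isomorphic multiplication maps.
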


\begin{proof}
    Apply Lemma \ref{adjust_coeff_of_Fano_type_model} for $\phi = \mathrm{id}$, $\Delta_0 = E$, and $\Delta_1 = 0$, we conclude that $Y$ is of klt type. Then there is a small birational modification $\pi \colon Y' \to Y$ such that $E_i' = \pi^{-1}_{*}E_i$ is $\mathbb{Q}$-Cartier for each $i$, and $\pi$ is a local isomorphism over the locus where every $E_i$ is $\mathbb{Q}$-Cartier (see \cite[Lem.\ 4.7]{Zhuang_boundednessI}). Apply Lemma \ref{adjust_coeff_of_Fano_type_model} for $\phi = \pi^{-1}$ and $\Delta_0 = \Delta_1 = 0$, we get a pair $(Y', \Delta')$ satisfying the same conditions as in the Theorem. By Lemma \ref{compare_sections}, we may replace $(Y, \Delta)$ with $(Y', \Delta')$, hence assume every $E_i$ is $\mathbb{Q}$-Cartier. 
    
    (1). One can write $\mathcal{R}$ as a quotient of a Cox ring associated with some $\mathbb{Q}$-Cartier divisors on $Y$. Thus $\mathcal{R}$ is of finite type over $A = H^0(S, \mathscr{O}_S)$ since $Y$ is of klt Fano type over $S$. 

    (2). This follows from Theorem \ref{Rees_module_flat}. 

    (3). Let $Z^{(0)}$ be the set of generic points of $Z$, so that $K(Z) = \prod_{\zeta \in Z^{(0)}} \kappa(\zeta)$. Note that 
    \begin{equation*}
        \mathcal{R}/(t_1, \ldots, t_r)\mathcal{R} = \bigoplus_{n \in \mathbb{N}^s} \bigoplus_{m \in \mathbb{N}^r} H^0(Y, \mathscr{O}_{Y}(L(n,m))) \bigg/ \sum_{i=1}^r H^0(Y, \mathscr{O}_{Y}(L(n,m) - E_i)), 
    \end{equation*}
    where $L(n,m) = \sum_{j=1}^{s} n_jL_j - \sum_{i=1}^{r} m_iE_i$ for $(n,m) \in \mathbb{N}^s \times \mathbb{N}^r$. Let $Z^{(0, L(n,m))} \subset Z^{(0)}$ denote the set of $\zeta \in Z^{(0)}$ such that $\zeta \notin \mathbf{B}(L(n,m))$ and $L(n,m)$ is Cartier at $\zeta$. Then the canonical map 
    \begin{equation*}
        H^0(Y, \mathscr{O}_{Y}(L(n,m))) \bigg/ \sum_{i=1}^r H^0(Y, \mathscr{O}_{Y}(L(n,m) - E_i)) \to \prod_{\zeta \in Z^{(0, L(n,m))}} \mathscr{O}_Y(L(n,m)) \otimes \kappa(\zeta)
    \end{equation*}
    is injective. For each $\zeta \in Z^{(0)}$, let $\Sigma_{\zeta} \subset \mathbb{N}^s \times \mathbb{N}^r$ denote the submonoid consisting of $(n,m)$ such that $L(n,m)$ is Cartier at $\zeta$. Then, the canonical map 
    \begin{equation*}
        \mathcal{R}/(t_1, \ldots, t_r)\mathcal{R} \to \prod_{\zeta \in Z^{(0)}} \bigoplus_{(n,m) \in \Sigma_{\zeta}} \mathscr{O}_Y(L(n,m)) \otimes \kappa(\zeta)
    \end{equation*}
    is injective. It is a ring homomorphism since the multiplications on both sides are given by multiplication of sections, and the map is induced by restricting sections. Each $\mathscr{O}_Y(L(n,m)) \otimes \kappa(\zeta)$ is an $1$-dimensional $\kappa(\zeta)$-vector space, and the multiplications 
    \begin{equation*}
        (\mathscr{O}_Y(L(n,m)) \otimes \kappa(\zeta)) \otimes_{\kappa(\zeta)} (\mathscr{O}_Y(L(n',m')) \otimes \kappa(\zeta)) \to \mathscr{O}_Y(L(n+n',m+m')) \otimes \kappa(\zeta)
    \end{equation*}
    are isomorphisms for all $(n,m), (n',m') \in \Sigma_{\zeta}$. Hence we have an isomorphism 
    \begin{equation*}
        \bigoplus_{(n,m) \in \Sigma_{\zeta}} \mathscr{O}_Y(L(n,m)) \otimes \kappa(\zeta) \simeq \kappa(\zeta)[\Sigma_{\zeta}], 
    \end{equation*}
    where $\kappa(\zeta)[\Sigma_{\zeta}]$ denotes the monoid ring on $\Sigma_{\zeta}$. Then the inclusion $\Sigma_{\zeta} \hookrightarrow \mathbb{N}^s \times \mathbb{N}^r$ induces an injective map $\kappa(\zeta)[\Sigma_{\zeta}] \to \kappa(\zeta)[\mathbb{N}^s \times \mathbb{N}^r]$. Take the composition of all these maps, we get an injective ring map 
    \begin{equation*}
        \mathcal{R}/(t_1, \ldots, t_r)\mathcal{R} \hookrightarrow \prod_{\zeta \in Z^{(0)}} \kappa(\zeta)[\mathbb{N}^s \times \mathbb{N}^r] = K(Z)[\mathbb{N}^s \times \mathbb{N}^r]. 
    \end{equation*}
    If $Z$ is irreducible, then $K(Z)$ is the function field of $Z$, and $K(Z)[\mathbb{N}^s \times \mathbb{N}^r]$ is a polynomial ring on $s + r$ variables. In particular, $K(Z)[\mathbb{N}^s \times \mathbb{N}^r]$ is an integral domain, and so is $\mathcal{R}/(t_1, \ldots, t_r)\mathcal{R}$. 
\end{proof}

The finite generation for valuations follows from the observation that if the extended Rees algebra $\mathcal{R}$ is flat $\Bbbk[t_1, \ldots, t_r]$, and $\mathcal{R}/(t_1, \ldots, t_r)$ is an integral domain, then it coincide with the graded algebra of the valuation. We make this observation precise in the following lemma (cf. \cite[Lem.\ 4.8]{XZ_stable_deg} and \cite[Prop.\ 5.22]{Xu_K-Stability_Book}). To have a more concise statement and emphasize the essential assumptions, we consider a single module $H^0(Y, \mathscr{O}_Y(L))$; see also Remark \ref{get_rid_of_ample} for the use of an auxiliary ample divisor. 

\begin{lem} \label{integral_central_fiber_is_valuative}
    Let $(Y, \Delta)$ be an lc pair such that $Y$ is of klt type, $E_1, \ldots, E_r$ be $\mathbb{Q}$-Cartier reduced divisors on $Y$ with $E = \sum_{i = 1}^{r} E_i = \lfloor \Delta \rfloor$, and $g \colon Y \to S$ be a proper morphism. Assume $Z = \bigcap_{i=1}^{r} E_i$ is irreducible with the generic point $\zeta \in Z$. For each $m = (m_1, \ldots, m_r) \in \mathbb{Z}^r$, write 
    \begin{equation*}
        E(m) \coloneqq \sum_{i=1}^{r} \max(m_i, 0) E_i. 
    \end{equation*}
    Suppose $L$ and $H$ are $\mathbb{Q}$-Cartier $\mathbb{Z}$-divisors on $Y$, and $H$ is $g$-ample. Let 
    \begin{equation*}
        R = \bigoplus_{k \in \mathbb{N}} H^0(Y, \mathscr{O}_Y(kH)), \quad M = \bigoplus_{k \in \mathbb{N}} H^0(Y, \mathscr{O}_Y(L + kH)). 
    \end{equation*}
    Let 
    \begin{equation*}
        \mathcal{R} = \bigoplus_{k \in \mathbb{N}} \bigoplus_{m \in \mathbb{Z}^r} H^0(Y, \mathscr{O}_Y(kH - E(m))) t_1^{-m_1} \cdots t_r^{-m_r} \subset R[t_1^{\pm 1}, \ldots, t_r^{\pm r}], 
    \end{equation*}
    and 
    \begin{equation*}
        \mathcal{M} = \bigoplus_{k \in \mathbb{N}} \bigoplus_{m \in \mathbb{Z}^r} H^0(Y, \mathscr{O}_Y(L + kH - E(m))) t_1^{-m_1} \cdots t_r^{-m_r} \subset M[t_1^{\pm 1}, \ldots, t_r^{\pm r}]. 
    \end{equation*}
    Assume that $\mathcal{R}$ and $\mathcal{M}$ are flat over $\Bbbk[t_1, \ldots, t_r]$, $\overline{\mathcal{R}} = \mathcal{R}/(t_1,\ldots,t_r)\mathcal{R}$ is an integral domain, and $\overline{\mathcal{M}} = \mathcal{M}/(t_1,\ldots,t_r)\mathcal{M}$ is a torsion-free $\overline{\mathcal{R}}$-module. Suppose $\alpha = (\alpha_1, \ldots, \alpha_r)\in \mathbb{R}_{>0}^r$ and $v_{\alpha} \in \mathrm{QM}^{\circ}_{\zeta}(Y, E)$ is the corresponding quasi-monomial valuation. Let $\mathcal{F}_{v_{\alpha}}$ denote the filtration on $H^0(Y, \mathscr{O}_Y(L))$ induced by $v_{\alpha}$. Then 
    \begin{equation*}
        \mathcal{F}_{v_{\alpha}}^{\lambda} H^0(Y, \mathscr{O}_Y(L)) = \sum_{\langle \alpha, m \rangle \geq \lambda} H^0(Y, \mathscr{O}_Y(L - E(m)))
    \end{equation*}
    for all $\lambda \geq 0$, where the sum ranges over all $m \in \mathbb{N}^r$ such that $\langle \alpha, m \rangle = \sum_{i=1}^{r} \alpha_i m_i \geq \lambda$. 
\end{lem}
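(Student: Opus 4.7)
The easy containment $\supseteq$ is immediate, since any $s \in H^0(Y, \mathscr{O}_Y(L - E(m)))$ with $\langle \alpha, m\rangle \geq \lambda$ satisfies $v_\alpha(s) \geq \langle \alpha, m\rangle \geq \lambda$ by definition of $v_\alpha$. The content is the reverse, and my plan is to first establish the equality of filtrations on $H^0(Y, \mathscr{O}_Y(L + kH))$ for $k \gg 0$ via a cohomological vanishing argument, then descend to $k = 0$ using the torsion-freeness of $\overline{\mathcal{M}}$ over the integral domain $\overline{\mathcal{R}}$.

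For the first step, I will apply Lemma \ref{valuative_ideal_of_qm} to identify the sheaf $\mathcal{F}_{v_\alpha}^\lambda \mathscr{O}_Y(L)$ with $\sum_{\langle\alpha,m\rangle \geq \lambda} \mathscr{O}_Y(L - E(m))$. By Dickson's lemma the sum is finite, indexed by the minimal elements $m^{(1)}, \ldots, m^{(N)}$ of $\{m \in \mathbb{N}^r : \langle\alpha, m\rangle \geq \lambda\}$. Let $\mathcal{K}$ be the kernel of the surjection $\bigoplus_j \mathscr{O}_Y(L - E(m^{(j)})) \twoheadrightarrow \mathcal{F}_{v_\alpha}^\lambda \mathscr{O}_Y(L)$. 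Tensoring with $\mathscr{O}_Y(kH)$ is exact, and the long exact sequence in cohomology shows that the cokernel of the natural map $\bigoplus_j H^0(Y, \mathscr{O}_Y(L + kH - E(m^{(j)}))) \to H^0(Y, \mathcal{F}_{v_\alpha}^\lambda \mathscr{O}_Y(L + kH))$ embeds in $H^1(Y, \mathcal{K} \otimes \mathscr{O}_Y(kH))$, which vanishes for all $k \gg 0$ by Serre vanishing, using that $H$ is $g$-ample and $S = \mathop{\mathrm{Spec}}(A)$ is affine. This yields $\mathcal{F}_{v_\alpha}^\lambda H^0(Y, \mathscr{O}_Y(L + kH)) = \mathcal{F}_\alpha^\lambda H^0(Y, \mathscr{O}_Y(L + kH))$ for $k$ sufficiently large.

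To descend to $k = 0$, given $s \in H^0(Y, \mathscr{O}_Y(L))$ with $v_\alpha(s) \geq \lambda$, I will fix $k_0 \gg 0$ so that the first step applies to $L + k H$ for all $k \geq k_0$, and choose $u \in H^0(Y, \mathscr{O}_Y(k_0 H))$ with $u|_\zeta \neq 0$, which exists by $g$-ampleness of $H$. Then $v_\alpha(u) = 0$, and since $\zeta \in \bigcap_i E_i$ any representation $u = \sum_i u_i$ with $u_i \in H^0(Y, \mathscr{O}_Y(k_0 H - E_i))$ would force $u|_\zeta = 0$; hence $[u] \in \overline{\mathcal{R}}_{(k_0, 0)}$ is nonzero and, by integrality of $\overline{\mathcal{R}}$, a non-zero-divisor. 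For every $N \geq 1$ one has $v_\alpha(s u^N) = v_\alpha(s) \geq \lambda$, so the first step yields $s u^N \in \mathcal{F}_\alpha^\lambda H^0(Y, \mathscr{O}_Y(L + Nk_0 H))$. On the graded quotient $Q = M/\mathcal{F}_\alpha^\lambda M$, equipped with its induced separated filtration whose associated graded is, by Lemma \ref{graded_of_compatible_filtration}, the quotient of $\overline{\mathcal{M}}$ by the $\overline{\mathcal{R}}$-submodule $\bigoplus_{\mu \geq \lambda} \overline{\mathcal{M}}_\mu$, multiplication by $[u]$ is injective by the torsion-freeness hypothesis on $\overline{\mathcal{M}}$. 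A standard principal-symbol argument for separated filtered modules then promotes this to $u$ being a non-zero-divisor on $Q$, and the identity $u^N \cdot [s] = [s u^N] = 0$ in $Q$ for all $N \geq 1$ forces $[s] = 0$, i.e., $s \in \mathcal{F}_\alpha^\lambda H^0(Y, \mathscr{O}_Y(L))$.

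The main obstacle I anticipate is the principal-symbol step: when $\alpha$ has irrational components, the set of jump values $\{\langle\alpha, m\rangle : m \in \mathbb{N}^r\}$ is dense in $\mathbb{R}_{\geq 0}$ and the $\mathcal{F}_\alpha$-level of an element of $Q$ need not be attained, so care is required. This can be handled either by first treating rational $\alpha$ (where jumps are discrete and attainment is automatic) and extending to general $\alpha$ by a density/continuity argument on both sides of the asserted equality, or by bypassing attainment entirely by working with the $\mathbb{N}^r$-graded decomposition of $\overline{\mathcal{M}}$ and using that, by Lemma \ref{res_of_divisor_to_center}, each nonzero graded piece injects into a $1$-dimensional $\kappa(\zeta)$-vector space so that multiplication by $[u]$ is explicitly injective piece by piece.
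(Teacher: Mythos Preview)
Your proof is correct and follows essentially the same route as the paper's. Both arguments (i) use Lemma~\ref{valuative_ideal_of_qm} plus ampleness of $H$ to get $\mathcal{F}_{v_\alpha}^\lambda = \mathcal{F}_\alpha^\lambda$ on $H^0(Y,\mathscr{O}_Y(L+kH))$ for $k\gg 0$, (ii) choose a section $u\in R_{k_0}$ not vanishing at $\zeta$, and (iii) descend to $k=0$ using that $\overline{\mathcal{M}}$ is torsion-free over the domain $\overline{\mathcal{R}}$ via the identification $\mathrm{gr}_{\mathcal{F}_\alpha}(M)\simeq\overline{\mathcal{M}}$ from Lemma~\ref{graded_of_compatible_filtration}. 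The only organizational difference is that the paper packages step (iii) as the single statement ``$\mathrm{ord}_\alpha$ is multiplicative'' (i.e.\ $\mathrm{ord}_\alpha(fs)=\mathrm{ord}_\alpha(f)+\mathrm{ord}_\alpha(s)$), from which $\mathrm{ord}_\alpha(s)=\mathrm{ord}_\alpha(us)\geq\lambda$ is immediate, whereas you phrase the same content as injectivity of multiplication by $u$ on $Q=M/\mathcal{F}_\alpha^\lambda M$ via a principal-symbol argument.

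Your anticipated obstacle is not actually an obstacle: for any homogeneous $s\in M_k$ the set of $m\in\mathbb{N}^r$ with $H^0(Y,\mathscr{O}_Y(L+kH-E(m)))\neq 0$ is finite, so the jump values of $\mathcal{F}_\alpha$ on $M_k$ form a finite set and the supremum defining $\mathrm{ord}_\alpha(s)$ is always attained, regardless of whether $\alpha$ is rational. The paper notes this explicitly (``the supremum is also a maximum''), and with it your principal-symbol argument goes through directly without any density or continuity workaround.
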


\begin{proof}
    For any $\mathbb{Q}$-Cartier $\mathbb{Z}$-divisor $D$ on $Y$, let $\mathcal{F}_{\alpha}$ denote the filtration on $H^0(Y, \mathscr{O}_Y(D))$ given by 
    \begin{equation*}
        \mathcal{F}_{\alpha}^{\lambda}H^0(Y, \mathscr{O}_Y(D)) = \sum_{\langle \alpha, m \rangle \geq \lambda} H^0(Y, \mathscr{O}_Y(D - E(m)))
    \end{equation*}
    for all $\lambda \geq 0$. If $s \in H^0(Y, \mathscr{O}_Y(D))$, defined 
    \begin{equation*}
        \mathrm{ord}_{\alpha}(s) = \sup \{ \lambda \in \mathbb{R}_{\geq 0} : s \in \mathcal{F}_{\alpha}^{\lambda}H^0(Y, \mathscr{O}_Y(D)) \}. 
    \end{equation*}
    Note that the supremum is also a maximum, that is, if $\mathrm{ord}_{\alpha}(s) = \lambda$, then $s \in \mathcal{F}_{\alpha}^{\lambda}H^0(Y, \mathscr{O}_Y(D))$. Now we have filtrations $\mathcal{F}_{\alpha}$ on $R_k = H^0(Y, \mathscr{O}_Y(kH))$ and $M_k = H^0(Y, \mathscr{O}_Y(L + kH))$, and filtrations $\mathcal{F}_{v_{\alpha}}$ induced by the valuation $v_{\alpha}$. Then we need to show $\mathcal{F}_{\alpha}^{\lambda}M_0 = \mathcal{F}_{v_{\alpha}}^{\lambda}M_0$, or equivalently, 
    \begin{equation*}
        v_{\alpha}(s) = \mathrm{ord}_{\alpha}(s)
    \end{equation*}
    for all $s \in M_0$. 

    Note that $v_{\alpha}$ is multiplicative: if $f \in R_k$ and $s \in M_{\ell}$, then $v_{\alpha}(fs) = v_{\alpha}(f) + v_{\alpha}(s)$ where $fs \in M_{k+\ell}$. We will show that $\mathrm{ord}_{\alpha}$ is also multiplicative. Let $f \in R_k$ and $s \in M_{\ell}$ with $\mathrm{ord}_{\alpha}(f) = \lambda$ and $\mathrm{ord}_{\alpha}(s) = \mu$, since 
    \begin{equation*}
        \mathcal{F}_{\alpha}^{\lambda} R_k \cdot \mathcal{F}_{\alpha}^{\mu} M_{\ell} \subset \mathcal{F}_{\alpha}^{\lambda + \mu} M_{k + \ell}, 
    \end{equation*}
    we have $\mathrm{ord}_{\alpha}(fs) \geq \lambda + \mu$. By Lemma \ref{graded_of_compatible_filtration}, we have 
    \begin{equation*}
        \mathrm{gr}_{\mathcal{F}_{\alpha}}(R) \simeq \overline{\mathcal{R}}, \quad \mathrm{gr}_{\mathcal{F}_{\alpha}}(M) \simeq \overline{\mathcal{M}}, 
    \end{equation*}
    hence $\mathrm{gr}_{\mathcal{F}_{\alpha}}(R)$ is an integral domain and $\mathrm{gr}_{\mathcal{F}_{\alpha}}(M)$ is a torsion-free $\mathrm{gr}_{\mathcal{F}_{\alpha}}(R)$-module by our assumptions. Let $\bar{f} \in \mathcal{F}_{\alpha}^{\lambda}R_k/\mathcal{F}_{\alpha}^{>\lambda}R_k$ and $\bar{s} \in \mathcal{F}_{\alpha}^{\mu}M_{\ell}/\mathcal{F}_{\alpha}^{>\mu}M_{\ell}$ be the images of $f$ and $s$, respectively. Then $\bar{f}$ and $\bar{s}$ are nonzero, hence $\bar{f} \cdot \bar{s} \in \mathrm{gr}_{\mathcal{F}_{\alpha}}(M)$ is also nonzero. By definition, the product $\bar{f} \cdot \bar{s} \in \mathrm{gr}_{\mathcal{F}_{\alpha}}(M)$ is the image of $fs \in \mathcal{F}_{\alpha}^{\mu} M_{k + \ell}$ in $\mathcal{F}_{\alpha}^{\lambda + \mu} M_{k + \ell}/\mathcal{F}_{\alpha}^{>\lambda + \mu} M_{k + \ell}$. Hence $fs \notin \mathcal{F}_{\alpha}^{>\lambda + \mu} M_{k + \ell}$, that is, $\mathrm{ord}_{\alpha}(fs) = \lambda + \mu$. 

    The valuation $v_{\alpha}$ induces filtrations on $\mathscr{O}_Y(D)$ for any $\mathbb{Q}$-Cartier $\mathbb{Z}$-divisor $D$, such that 
    \begin{equation*}
        \mathcal{F}_{v_{\alpha}}^{\lambda}H^0(Y, \mathscr{O}_Y(D)) = H^0\left( Y, \mathcal{F}_{v_{\alpha}}^{\lambda}(\mathscr{O}_Y(D)) \right). 
    \end{equation*}
    By Lemma \ref{valuative_ideal_of_qm}, these filtrations are given by 
    \begin{equation*}
        \mathcal{F}_{v_{\alpha}}^{\lambda}(\mathscr{O}_Y(D)) = \sum_{\langle \alpha, m \rangle \geq \lambda} \mathscr{O}_Y(D - E(m)). 
    \end{equation*}
    Thus we have $\mathcal{F}_{\alpha}^{\lambda}R_k \subset \mathcal{F}_{v_{\alpha}}^{\lambda}R_k$ and $\mathcal{F}_{\alpha}^{\lambda}M_k \subset \mathcal{F}_{v_{\alpha}}^{\lambda}M_k$, and for each fixed $\lambda$, the equalities hold for $k \gg 0$ since $H$ is ample. Let $s \in M_0$, with $v_{\alpha}(s) = \lambda$. Then $\mathrm{ord}_{\alpha}(s) \leq \lambda$. Choose $k \gg 0$, depending on $\lambda$, such that $\mathcal{F}_{\alpha}^{\lambda}M_k = \mathcal{F}_{v_{\alpha}}^{\lambda}M_k$. Furthermore, we may assume $kH$ is Cartier and there exists $f \in R_k$ not vanishing at $\zeta$, since $H$ is ample. Then it is clear that $v_{\alpha}(f) = 0 = \mathrm{ord}_{\alpha}(f)$. Then 
    \begin{equation*}
        v_{\alpha}(fs) = v_{\alpha}(f) + v_{\alpha}(s) = \lambda, 
    \end{equation*}
    hence $fs \in \mathcal{F}_{v_{\alpha}}^{\lambda}M_k = \mathcal{F}_{\alpha}^{\lambda}M_k$. That is, $\mathrm{ord}_{\alpha}(fs) \geq \lambda$. Since $\mathrm{ord}_{\alpha}$ is multiplicative, we have 
    \begin{equation*}
        \mathrm{ord}_{\alpha}(s) = \mathrm{ord}_{\alpha}(fs) - \mathrm{ord}_{\alpha}(f) \geq \lambda. 
    \end{equation*}
    Therefore we have $\mathrm{ord}_{\alpha}(s) = \lambda = v_{\alpha}(s)$. 
\end{proof}

\begin{thm} \label{finite_generation_thm}
    Let $(Y, \Delta)$ be an lc pair, $E_1, \ldots, E_r$ be reduced divisors on $Y$ with $E = \sum_{i = 1}^{r} E_i = \lfloor \Delta \rfloor$, and $g \colon Y \to S$ be a proper morphism such that $-(K_Y + \Delta)$ is $g$-ample. Assume every lc center of $(Y, \Delta)$ is contained in $\mathrm{Supp}(E)$, and each $E_i$ is $\mathbb{Q}$-Cartier at the lc centers. For each $m = (m_1, \ldots, m_r) \in \mathbb{Z}^r$, write 
    \begin{equation*}
        E(m) \coloneqq \sum_{i=1}^{r} \max(m_i, 0) E_i. 
    \end{equation*}
    Let $L_1, \ldots, L_s$ be $\mathbb{Q}$-Cartier $\mathbb{Z}$-divisors on $Y$, 
    \begin{equation*}
        R = R(L_1, \ldots, L_s) = \bigoplus_{n = (n_1, \ldots, n_s) \in \mathbb{N}^s}H^0(Y, \mathscr{O}_Y(n_1L_1 + \cdots + n_sL_s))
    \end{equation*}
    be the multi-section ring, and 
    \begin{equation*}
        \mathcal{R} = \bigoplus_{n \in \mathbb{N}^s} \bigoplus_{m \in \mathbb{Z}^r} H^0(Y, \mathscr{O}_Y(n_1L_1 + \cdots + n_sL_s - E(m))) t_1^{-m_1} \cdots t_r^{-m_r} \subset R[t_1^{\pm 1}, \ldots, t_r^{\pm r}]
    \end{equation*}
    be the extended Rees algebra. Assume $Z = \bigcap_{i=1}^{r} E_i$ is irreducible with the generic point $\zeta \in Z$. Suppose $\alpha = (\alpha_1, \ldots, \alpha_r)\in \mathbb{R}_{>0}^r$ and $v_{\alpha} \in \mathrm{QM}^{\circ}_{\zeta}(Y, E)$ is the corresponding quasi-monomial valuation. Let $\mathcal{F}_{v_{\alpha}}$ be the filtration on $R$ induced by $v_{\alpha}$, with the associated graded algebra $\mathrm{gr}_{v_{\alpha}}(R)$. Then there is a canonical isomorphism 
    \begin{equation*}
        \mathrm{gr}_{v_{\alpha}}(R) \simeq \mathcal{R}/(t_1, \ldots, t_r)\mathcal{R}. 
    \end{equation*}
    In particular, $\mathrm{gr}_{v_{\alpha}}(R)$ is of finite type over $A = H^0(S, \mathscr{O}_S)$. 
\end{thm}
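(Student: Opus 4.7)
The plan is to reduce Theorem \ref{finite_generation_thm} to the previously established Theorem \ref{the_central_fiber_of_Rees_alg}, Lemma \ref{graded_of_compatible_filtration}, and Lemma \ref{integral_central_fiber_is_valuative}. As a preliminary step, repeating the argument at the start of the proof of Theorem \ref{the_central_fiber_of_Rees_alg} (a small $\mathbb{Q}$-factorialization combined with Lemma \ref{adjust_coeff_of_Fano_type_model}), I would assume without loss of generality that every $E_i$ is $\mathbb{Q}$-Cartier on $Y$.

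Fix $n \in \mathbb{N}^s$, write $L_n = n_1 L_1 + \cdots + n_s L_s$ and $R_n = H^0(Y, \mathscr{O}_Y(L_n))$. The extended Rees module of $R_n$ with respect to the $r$ divisorial filtrations by the $E_i$ is flat over $\Bbbk[t_1,\ldots,t_r]$ by Theorem \ref{Rees_module_flat}, so Lemma \ref{graded_of_compatible_filtration} yields a canonical isomorphism
\begin{equation*}
    \mathrm{gr}_{\mathcal{F}_\alpha}(R_n) \simeq \overline{\mathcal{R}}_n,
\end{equation*}
where $\overline{\mathcal{R}}_n$ denotes the $n$-graded piece of $\overline{\mathcal{R}} = \mathcal{R}/(t_1,\ldots,t_r)\mathcal{R}$, and $\mathcal{F}_\alpha$ is the $\mathbb{R}$-valued filtration
\begin{equation*}
    \mathcal{F}_\alpha^\lambda R_n = \sum_{\langle \alpha, m \rangle \geq \lambda} H^0\bigl(Y, \mathscr{O}_Y(L_n - E(m))\bigr).
\end{equation*}
Summing over $n$, the theorem then reduces to the pointwise identity $\mathcal{F}_\alpha = \mathcal{F}_{v_\alpha}$ on each $R_n$.

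One inclusion $\mathcal{F}_\alpha^\lambda R_n \subset \mathcal{F}_{v_\alpha}^\lambda R_n$ is immediate from Lemma \ref{valuative_ideal_of_qm}, since any section of $\mathscr{O}_Y(L_n - E(m))$ has $v_\alpha$-value at least $\langle \alpha, m\rangle$ at $\zeta$. For the opposite inclusion I would invoke Lemma \ref{integral_central_fiber_is_valuative}. That lemma treats a single pair $(L, H)$ with $H$ a $g$-ample divisor, so the key move is to produce one auxiliary ample divisor that works uniformly across all $n$. Concretely, I would pick a $g$-ample $\mathbb{Q}$-Cartier $\mathbb{Z}$-divisor $H$ on $Y$ (which exists since $-(K_Y+\Delta)$ is $g$-ample), enlarge the list of divisors to $L_1, \ldots, L_s, H$, and form the corresponding extended Rees algebra $\widetilde{\mathcal{R}}$. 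By Theorem \ref{the_central_fiber_of_Rees_alg}(3), $\overline{\widetilde{\mathcal{R}}}$ is an integral domain. The subring $\mathcal{R}^{(H)} \subset \widetilde{\mathcal{R}}$ coming from the pure-$H$ part, and, for each fixed $n$, the $\mathcal{R}^{(H)}$-submodule $\mathcal{M}^{(n)}$ built from $L_n + kH$, are both flat over $\Bbbk[t_1,\ldots,t_r]$ by Theorem \ref{Rees_module_flat}; moreover $\overline{\mathcal{M}^{(n)}}$ identifies with a graded submodule of the domain $\overline{\widetilde{\mathcal{R}}}$, hence is torsion-free over $\overline{\mathcal{R}^{(H)}}$. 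These are exactly the hypotheses of Lemma \ref{integral_central_fiber_is_valuative}, which then delivers $\mathcal{F}_{v_\alpha}^\lambda R_n = \mathcal{F}_\alpha^\lambda R_n$ for every $n$ and $\lambda$.

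Combining the two steps yields the canonical isomorphism $\mathrm{gr}_{v_\alpha}(R) \simeq \overline{\mathcal{R}}$, and finite generation of $\mathrm{gr}_{v_\alpha}(R)$ over $A$ is then automatic from Theorem \ref{the_central_fiber_of_Rees_alg}(1). The step I expect to need the most care is the uniform setup with the auxiliary divisor $H$: one must verify that under the inclusion into $\widetilde{\mathcal{R}}$ the quotient $\overline{\mathcal{M}^{(n)}}$ really sits inside $\overline{\widetilde{\mathcal{R}}}$ as a graded submodule (not merely as a quotient), so that integrality of $\overline{\widetilde{\mathcal{R}}}$ descends to torsion-freeness of $\overline{\mathcal{M}^{(n)}}$ over $\overline{\mathcal{R}^{(H)}}$. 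Once this is in place, the identification of the two filtrations is a direct application of Lemma \ref{integral_central_fiber_is_valuative}.
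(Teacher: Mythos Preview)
Your proposal is correct and follows essentially the same approach as the paper: reduce to $\mathbb{Q}$-Cartier $E_i$, then for each graded piece introduce an auxiliary $g$-ample divisor $H$, use Theorem \ref{the_central_fiber_of_Rees_alg} to see that the central fiber of the enlarged Rees algebra is a domain, and read off torsion-freeness of the relevant graded piece so that Lemma \ref{integral_central_fiber_is_valuative} applies. The paper works with $R(L,H)$ for one $L$ at a time rather than your larger $R(L_1,\ldots,L_s,H)$, but the mechanism is identical, and your worry about $\overline{\mathcal{M}^{(n)}}$ sitting inside $\overline{\widetilde{\mathcal{R}}}$ is resolved exactly as in the paper, since it is a graded direct summand of $\widetilde{\mathcal{R}}$ over $\Bbbk[t_1,\ldots,t_r]$.
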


\begin{proof}
    Apply Lemma \ref{adjust_coeff_of_Fano_type_model} for $\phi = \mathrm{id}$, $\Delta_0 = E$, and $\Delta_1 = 0$, we conclude that $Y$ is of klt type. Then there is a small birational modification $\pi \colon Y' \to Y$ such that $E_i' = \pi^{-1}_{*}E_i$ is $\mathbb{Q}$-Cartier for each $i$, and $\pi$ is a local isomorphism over the locus where every $E_i$ is $\mathbb{Q}$-Cartier (see \cite[Lem.\ 4.7]{Zhuang_boundednessI}). Apply Lemma \ref{adjust_coeff_of_Fano_type_model} for $\phi = \pi^{-1}$ and $\Delta_0 = \Delta_1 = 0$, we get a pair $(Y', \Delta')$ satisfying the same conditions as in the Theorem. By Lemma \ref{compare_sections}, we may replace $(Y, \Delta)$ with $(Y', \Delta')$, hence assume every $E_i$ is $\mathbb{Q}$-Cartier. 

    Suppose $L$ is a $\mathbb{Q}$-Cartier $\mathbb{Z}$-divisor on $Y$, we claim that 
    \begin{equation*}
        \mathcal{F}_{v_{\alpha}}^{\lambda} H^0(Y, \mathscr{O}_Y(L)) = \sum_{\langle \alpha, m \rangle \geq \lambda} H^0(Y, \mathscr{O}_Y(L - E(m))). 
    \end{equation*}
    To show this, let $H$ be an $g$-ample divisor on $Y$, and consider the extended Rees algebra $\mathcal{R}_{L,H}$ for the multi-section ring 
    \begin{equation*}
        R(L, H) = \bigoplus_{\ell,k \geq 0} H^0(Y, \mathscr{O}_Y(\ell L + kH)). 
    \end{equation*}
    Then $\mathcal{R}_{L,H}$ is flat over $\Bbbk[t_1, \ldots, t_r]$, and $\mathcal{R}_{L,H}/(t_1, \ldots, t_r)\mathcal{R}_{L,H}$ is an integral domain, by Theorem \ref{the_central_fiber_of_Rees_alg}. For each $\ell \geq 0$, let $\mathcal{M}_{\ell}$ be the extended Rees module for $\bigoplus_{k \geq 0} H^0(Y, \mathscr{O}_Y(\ell L + kH))$, so that $\mathcal{M}_0 = \mathcal{R}_H$ is the extended Rees algebra for the section ring of $H$, and 
    \begin{equation*}
        \mathcal{R}_{L,H}/(t_1, \ldots, t_r)\mathcal{R}_{L,H} = \bigoplus_{\ell \geq 0} \mathcal{M}_{\ell}/(t_1, \ldots, t_r)\mathcal{M}_{\ell}. 
    \end{equation*}
    Then $\mathcal{M}_{1}/(t_1, \ldots, t_r)\mathcal{M}_{1}$ is a torsion-free module over the integral domain $\mathcal{M}_{0}/(t_1, \ldots, t_r)\mathcal{M}_{0}$. Thus, by Lemma \ref{integral_central_fiber_is_valuative}, we get the desired formula for $\mathcal{F}_{v_{\alpha}}^{\lambda} H^0(Y, \mathscr{O}_Y(L))$. 

    Therefore, $\mathrm{gr}_{v_{\alpha}}(R) \simeq \mathcal{R}/(t_1, \ldots, t_r)\mathcal{R}$ by Lemma \ref{graded_of_compatible_filtration}. It is of finite type since $\mathcal{R}$ is of finite type. 
\end{proof}

\begin{rem} \label{get_rid_of_ample}
    Note that we need an auxiliary ample divisor $H$ in the proof to apply Lemma \ref{integral_central_fiber_is_valuative}, and get the result for the multi-section ring of arbitrary divisors. Thus, even if one is only interested in the section ring of one specific divisor (for example, the pullback of an anti-canonical divisor from a Fano variety), it is more convenient to consider multi-section rings throughout the proof. 
\end{rem}

By examine the whole proof, we can get more information on the associated graded algebra. 

\begin{cor} \label{graded_ring_from_polyhedral_divisorial_sheaf}
    Keep the assumptions in Theorem \ref{the_central_fiber_of_Rees_alg}. Assume that $Z = \bigcap_{i=1}^{r} E_i$ is irreducible with the generic point $\zeta \in Z$. Write 
    \begin{equation*}
        \overline{\mathcal{R}} = \mathcal{R}/(t_1, \ldots, t_r)\mathcal{R} = \bigoplus_{u \in \mathbb{N}^{s+r}} \overline{\mathcal{R}}_u, 
    \end{equation*}
    with the grading by $\mathbb{N}^{s+r}$ induced by the natural grading on $\mathcal{R}$. Let $\sigma \subset \mathbb{R}^{s+r}$ denote the closed convex cone spanned by all $u \in \mathbb{N}^{s+r}$ such that $\overline{\mathcal{R}}_u \neq 0$. Then $\sigma$ is a convex rational polyhedral cone, and for $u = (n,m) \in \mathbb{N}^{s+r}$, $u \in \sigma$ if and only if $\zeta \notin \mathbf{B}(n_1L_1 + \cdots + n_sL_s - m_1E_1 - \cdots - m_rE_r)$. 

    Assume furthermore that $(Y, \Delta)$ is dlt. Then there exists a positive integer $\ell$, a projective birational morphism $\rho \colon W \to Z$ with $W$ regular, and a collection of invertible $\mathscr{O}_W$-modules $\{\mathscr{L}(u)\}_{u \in \sigma \cap \ell \mathbb{N}^{s+r}}$ with nonzero maps 
    \begin{equation*}
        \mu_{u,u'} \colon \mathscr{L}(u) \otimes \mathscr{L}(u') \to \mathscr{L}(u+u')
    \end{equation*}
    such that the following hold: 
    \begin{enumerate}[label=\emph{(\arabic*)}, nosep]
        \item $\bigoplus_{u} \mathscr{L}(u)$ form a commutative $\mathscr{O}_W$-algebra, 
        \item there is a canonical isomorphism of graded rings 
        \begin{equation*}
            \overline{\mathcal{R}}_{(\ell)} \simeq \bigoplus_{u \in \sigma \cap \ell \mathbb{N}^{s+r}} H^0(W, \mathscr{L}(u))
        \end{equation*}
        where $\overline{\mathcal{R}}_{(\ell)} = \bigoplus_{u \in \ell\mathbb{N}^{s+r}} \overline{\mathcal{R}}_u \subset \overline{\mathcal{R}}$ is the $\ell$-th Veronese subring. 
        \item there is a decomposition $\sigma = \bigcup_{\lambda \in \Lambda} \sigma_{\lambda}$ of $\sigma$ into finitely many convex rational polyhedral cones $\sigma_{\lambda}$, such that if $u, u' \in \sigma_{\lambda} \cap \ell \mathbb{N}^{s+r}$ for some $\lambda \in \Lambda$, then the map $\mu_{u,u'}$ is an isomorphism. 
    \end{enumerate}
\end{cor}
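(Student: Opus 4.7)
The plan is to handle the two parts sequentially: the cone structure follows quickly from the finite generation and integrality established in Theorem \ref{the_central_fiber_of_Rees_alg}, while the explicit description of $\overline{\mathcal{R}}_{(\ell)}$ in the dlt case requires a Mori-chamber decomposition of $\sigma$ together with a carefully chosen common resolution of $Z$.

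For the first part, I would use that $\overline{\mathcal{R}}$ is an $\mathbb{N}^{s+r}$-graded, finitely generated integral domain over $A$. Taking homogeneous generators $f_1,\ldots,f_N$ of degrees $u_1,\ldots,u_N$, no monomial $f_1^{a_1}\cdots f_N^{a_N}$ vanishes, so the support $\{u : \overline{\mathcal{R}}_u \neq 0\}$ is exactly the finitely generated monoid $\mathbb{N}\langle u_1,\ldots,u_N\rangle$, and $\sigma$ is the rational polyhedral cone it spans. For the characterization, writing $L(n,m) = \sum_j n_j L_j - \sum_i m_i E_i$, Lemma \ref{Koszul_exact_on_sections}(2) together with the irreducibility of $Z$ shows that $\overline{\mathcal{R}}_{(n,m)}$ either injects into the one-dimensional space $\mathscr{O}_Y(L(n,m))\otimes \kappa(\zeta)$---when $L(n,m)$ is Cartier at $\zeta$ and $\zeta\notin \mathbf{B}(L(n,m))$---or vanishes. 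Passing to a suitable multiple (possible because $L(n,m)$ is $\mathbb{Q}$-Cartier at $\zeta$ by the hypotheses) then gives $(n,m)\in \sigma$ if and only if $\zeta \notin \mathbf{B}(L(n,m))$.

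For the dlt half, I would first invoke dlt adjunction to conclude that $Z$, as well as the analogous intersections on suitable MMP outputs, is normal. Then I would use the finite generation of the Cox-type ring built from the $L_j$ and the $-E_i$---essentially $\mathcal{R}$ itself, finitely generated by Theorem \ref{the_central_fiber_of_Rees_alg}(1)---to produce a decomposition $\sigma = \bigcup_{\lambda \in \Lambda} \sigma_\lambda$ into finitely many rational polyhedral Mori chambers, together with a single birational contraction $\phi_\lambda \colon Y \dashrightarrow Y_\lambda$ of the type furnished by Lemma \ref{pass_to_sa_model} that simultaneously serves every $u \in \sigma_\lambda$: on $Y_\lambda$ the pushforward $L^\lambda(u)$ is semi-ample, the $E^\lambda_i = \phi_{\lambda,*}E_i$ are $\mathbb{Q}$-Cartier, a dlt pair $(Y_\lambda, D^\lambda)$ satisfying the conditions of Theorem \ref{main_thm_2} exists, and $\phi_\lambda$ is an isomorphism at $\zeta$ by Lemma \ref{pass_to_sa_model}(5). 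Choose $\ell$ divisible enough so that for every $\lambda$ and every ray-generator of $\sigma_\lambda$, the divisor $\ell L^\lambda(u)$ is Cartier near $Z^\lambda \coloneqq \bigcap_i E^\lambda_i$, and take $W$ to be a single regular projective modification of $Z$ dominating all (finitely many) $Z^\lambda$.

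For $u\in \sigma_\lambda \cap \ell \mathbb{N}^{s+r}$, set $\mathscr{L}(u)$ to be the pullback to $W$ of $\mathscr{O}_{Y_\lambda}(L^\lambda(u))|_{Z^\lambda}$. Combining Lemma \ref{pass_to_sa_model}(4) with Lemma \ref{res_of_divisor_to_center} yields canonical isomorphisms $\overline{\mathcal{R}}_u \simeq H^0(Z^\lambda, \mathscr{O}_{Y_\lambda}(L^\lambda(u))|_{Z^\lambda}) \simeq H^0(W, \mathscr{L}(u))$, and multiplication of sections on the ambient $Y_\lambda$ supplies the maps $\mu_{u,u'}$. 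Within a single chamber these amount to adding Cartier divisors on one and the same model, so $\mu_{u,u'}$ is an isomorphism, giving (3); assembling the data over $\Lambda$ then produces the $\mathscr{O}_W$-algebra of (1) and the isomorphism of graded rings in (2). The main obstacle will be the Mori-chamber step itself: one must verify that the finitely many semi-ample models $Y_\lambda$ and their intersections $Z^\lambda$ can all be accommodated on a common regular $W$ with a common $\ell$, and that the maps $\mu_{u,u'}$ for $u,u'$ in different chambers remain non-zero and are compatible with the ambient graded ring structure on $\overline{\mathcal{R}}$. The crucial input that bounds the number of chambers and the size of $\ell$ is precisely the finite generation from Theorem \ref{the_central_fiber_of_Rees_alg}(1).
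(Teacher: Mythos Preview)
Your overall architecture matches the paper's: rational-polyhedrality of $\sigma$ from finite generation plus integrality, the characterization of $\sigma$ via Lemma~\ref{Koszul_exact_on_sections}(2), then a Mori-chamber decomposition $\sigma=\bigcup_\lambda \sigma_\lambda$ with one semi-ample model $Y_\lambda$ per chamber, an $\ell$ making the relevant divisors Cartier, and line bundles on a common regular $W$ over $Z$. The part you correctly flag as the ``main obstacle'' is, however, a genuine gap in your argument, and the paper resolves it with an idea you do not have.

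You take $W$ to be a regular projective modification of $Z$ dominating the finitely many $Z^\lambda$, and you define $\mathscr{L}(u)$ by pulling back $\mathscr{O}_{Y_\lambda}(L^\lambda(u))|_{Z^\lambda}$ along $W\to Z^\lambda$. Within a chamber this is fine, but for $u\in\sigma_\lambda$, $u'\in\sigma_{\lambda'}$, $u+u'\in\sigma_{\lambda''}$ you need a map $\mathscr{L}(u)\otimes\mathscr{L}(u')\to\mathscr{L}(u+u')$ of line bundles on $W$, and nothing in your setup produces one: the three sheaves come from three different ambient varieties $Y_\lambda,Y_{\lambda'},Y_{\lambda''}$, and ``multiplication of sections on the ambient $Y_\lambda$'' makes no sense across chambers. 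The paper's fix is to build $W$ \emph{inside an ambient resolution}: take a single log resolution $\pi\colon V\to (Y,E)$ that is a local isomorphism over $\zeta$ and such that each $\pi_u=\phi_u\circ\pi\colon V\to Y'_u$ is a morphism, and let $W\subset V$ be the closure of $\pi^{-1}(\zeta)$ (a stratum of an snc pair, hence regular). Then set $\mathscr{L}(u)=\mathscr{O}_V(\pi_u^*L'_u(u))|_W$. The cross-chamber multiplication now reduces to a comparison of divisors on $V$: since each $\phi_u$ is $L(u)$-non-positive one has $\pi_u^*L'_u(u)+\pi_v^*L'_v(v)\le \pi^*L(u+v)$, and since $\pi_{u+v}$ realizes a semi-ample model for $\pi^*L(u+v)$ this forces $\pi_u^*L'_u(u)+\pi_v^*L'_v(v)\le \pi_{u+v}^*L'_{u+v}(u+v)$. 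The resulting inclusion of line bundles on $V$ restricts to the desired $\mu_{u,v}$ on $W$; it is nonzero because it is an isomorphism at the generic point, and it is an equality when $u,v$ lie in a common chamber. The ring isomorphism in (2) is then checked by embedding both sides compatibly into $\bigoplus_u \mathscr{O}_Y(L(u))\otimes K(Z)$, using that all the $\pi_u$ are local isomorphisms at the generic point of $W$. Two smaller points you should also tighten: the claim that $(Y_\lambda,D^\lambda)$ is dlt (Lemma~\ref{pass_to_sa_model} alone gives only lc; you need Lemma~\ref{adjust_coeff_of_Fano_type_model} plus the fact that $\phi_\lambda$ is a local isomorphism at the lc centers to transport the snc condition), and the identification $H^0(Z^\lambda,\,\cdot\,)\simeq H^0(W,\,\cdot\,)$, which requires $Z^\lambda$ normal---this is exactly where the dlt hypothesis is used.
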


\begin{proof}
    As in Theorem \ref{the_central_fiber_of_Rees_alg}, we may assume that each $E_i$ is $\mathbb{Q}$-Cartier. 
    
    Let $u = (n,m) \in \mathbb{N}^{s+r}$. In the proof of Theorem \ref{the_central_fiber_of_Rees_alg}, we get 
    \begin{equation*}
        \overline{\mathcal{R}}_u \simeq H^0(Y, \mathscr{O}_{Y}(L(u))) \bigg/ \sum_{i=1}^r H^0(Y, \mathscr{O}_{Y}(L(u) - E_i)). 
    \end{equation*}
    where $L(u) = n_1L_1 + \cdots + n_sL_s - m_1E_1 - \cdots - m_rE_r$. If $\zeta \in \mathbf{B}(L(u))$, then $\overline{\mathcal{R}}_{u} = 0$ by Lemma \ref{Koszul_exact_on_sections}(2). Conversely, assume that $\zeta \notin \mathbf{B}(L(u))$. Then there is $c \in \mathbb{Z}_{>0}$ such that $L(cu) = cL(u)$ is Cartier, and $\zeta$ is not in the base locus of $L(cu)$. That is, there is a section $f \in H^0(Y, \mathscr{O}_Y(L(cu)))$ that does not vanish at $\zeta$. Then the image of $f$ in $\overline{\mathcal{R}}_{cu}$ is nonzero by Lemma \ref{Koszul_exact_on_sections}(2). Hence $cu \in \sigma$, so $u \in \sigma$ as well. 
    
    In the following, assume $(Y, \Delta)$ is dlt. Let $u \in \sigma \cap \mathbb{N}^{s + r}$. The proof of Lemma \ref{Koszul_exact_on_sections} says that there is a birational contraction 
    \begin{equation*}
        \begin{tikzcd}
            Y \ar[rr, "\phi_u", dashed] \ar[rd, "g"'] & & Y'_u \ar[ld, "g'_u"] \\
            & S &
        \end{tikzcd}
    \end{equation*}
    such that $\phi_u$ is a local isomorphism at $\zeta$, and $\overline{\mathcal{R}}_u \simeq H^0(Y'_u, \mathscr{L}'_u(u))$ with 
    \begin{equation*}
        \mathscr{L}'_u(u) = \mathscr{O}_{Y'_u}(L'_u(u)) \bigg/ \sum_{i=1}^r \mathscr{O}_{Y'_u}(L'_u(u) - E'_{i,u}). 
    \end{equation*}
    where $L'_u(u) = (\phi_u)_{*}L(u)$ and $E'_{i,u} = (\phi_u)_{*}E_i$. This birational contraction is given by Lemma \ref{pass_to_sa_model}, where $Y'_u$ is a small birational modification of a semi-ample model for $L(u)$ over $S$ given by an MMP. Thus we may assume the following hold: There is a decomposition $\sigma = \bigcup_{\lambda \in \Lambda} \sigma_{\lambda}$ of $\sigma$ into finitely many convex rational polyhedral cones $\sigma_{\lambda}$, such that if $u, w \in \sigma_{\lambda} \cap \mathbb{N}^{s+r}$ for some $\lambda \in \Lambda$, then $Y_u' = Y_w'$ and $\phi_u = \phi_{w}$ (see \cite[Cor.\ 1.1.5]{BCHM}). In particular, there are only finitely many distinct $Y'_u$. 
    
    Choose $\ell \in \mathbb{Z}_{>0}$ such that $\ell L'_u(u) = L'_{\ell u}(\ell u)$ is Cartier on $Y'_u = Y'_{\ell u}$ for all $u \in \sigma \cap \mathbb{N}^{s+r}$. Thus, for all $u \in \sigma \cap \ell \mathbb{N}^{s+r}$, $L'_u(u)$ is Cartier, and
    \begin{equation*}
        \mathscr{L}'_u(u) \simeq \mathscr{O}_{Y'_u}(L'_u(u))|_{Z'_u}, 
    \end{equation*}
    where $Z'_u = \bigcap_{i=1}^{r} E'_{i,u} \subset Y'_u$. By Lemma \ref{adjust_coeff_of_Fano_type_model}, $(Y'_u, \Delta'_u)$ is dlt for some effective $\mathbb{Q}$-divisor $\Delta'_u$ on $Y'_u$ with $\lfloor \Delta'_u \rfloor = E'_u = \sum_{i=1}^{r} E'_{i,u}$. Thus $Z'_u$ is normal, and $(Y'_u, E'_u)$ is snc at the generic point $\zeta'_u = \phi_u(\zeta)$ of $Z'_u$. Then there exists a log resolution $\pi \colon V \to (Y, E)$, such that $\pi$ is a local isomorphism over $\zeta$, and 
    \begin{equation*}
        \pi_u = \phi_u \circ \pi \colon V \to (Y'_u, E'_u)
    \end{equation*}
    is also a morphism and a log resolution for all $u \in \sigma \cap \mathbb{N}^{s+r}$. Let $W \subset V$ be the closure of $\pi^{-1}(\zeta)$. Then $W$ is a stratum of the snc pair $(V, \pi^{-1}_{*}E)$, so $W$ is regular. Moreover, $\pi$ induces a projective birational morphism $\rho \colon W \to Z$, and each $\pi_u$ induces a projective birational morphism $\rho_u \colon W \to Z'_u$. 

    Let $u \in \sigma \cap \ell \mathbb{N}^{s+r}$. Define 
    \begin{equation*}
        \mathscr{L}(u) \coloneqq \mathscr{O}_V(\pi_u^{*}L'_u(u))|_{W} \simeq \rho_u^{*}(\mathscr{O}_{Y'_u}(L'_u(u))|_{Z'_u}) \simeq \rho_u^{*}\mathscr{L}'_{u}(u). 
    \end{equation*}
    Then 
    \begin{equation*}
        \overline{\mathcal{R}}_u \simeq H^0(Z'_u, \mathscr{L}'_{u}(u)) \simeq H^0(W, \mathscr{L}(u)). 
    \end{equation*}
    For $u, v \in \sigma \cap \ell \mathbb{N}^{s+r}$, we claim that 
    \begin{equation*}
        \pi_u^{*}L'_u(u) + \pi_w^{*}L'_v(v) \leq \pi_{u+v}^{*}L'_{u+v}(u+v). 
    \end{equation*}
    If $u, v \in \sigma_{\lambda}$ for some $\lambda$, then clearly the equality holds. In general, $\pi_u^{*}L'_u(u) + \pi_v^{*}L'_v(v)$ is semi-ample over $S$, and 
    \begin{equation*}
        \pi_u^{*}L'_u(u) + \pi_v^{*}L'_v(v) \leq \pi^{*}L(u) + \pi^{*}L(v) = \pi^{*}L(u+v)
    \end{equation*}
    since $\phi_u$ is $L(u)$-non-positive and $\phi_v$ is $L(v)$-non-positive. Note that $\pi_{u+v} \colon W \to Y'_{u+v}$ is a semi-ample model for $\pi^{*}L(u+v)$ over $S$, and $L'_{u+v}(u+v) = (\pi_{u+v})_{*}\pi^{*}L(u+v)$. Hence 
    \begin{equation*}
        \pi_u^{*}L'_u(u) + \pi_v^{*}L'_v(v) \leq \pi_{u+v}^{*}L'_{u+v}(u+v). 
    \end{equation*}
    Thus, we have a commutative $\mathscr{O}_V$-algebra $\bigoplus_{u} \mathscr{O}_V(\pi_u^{*}L'_u(u))$, whose restriction to $W$ is a commutative $\mathscr{O}_W$-algebra $\bigoplus_{u} \mathscr{L}(u)$. The multiplication map $\mu_{u,v} \colon \mathscr{L}(u) \otimes \mathscr{L}(v) \to \mathscr{L}(u+v)$ is an isomorphism at the generic point of $W$, so $\mu_{u,v}$ is nonzero. Also, $\mu_{u,v}$ is an isomorphism if $u, v \in \sigma_{\lambda}$ for some $\lambda \in \Lambda$ in the decomposition $\sigma = \bigcup_{\lambda \in \Lambda} \sigma_{\lambda}$. 

    It remains to prove 
    \begin{equation*}
        \overline{\mathcal{R}}_{(\ell)} = \bigoplus_{u \in \sigma \cap \ell\mathbb{N}^{s+r}} \overline{\mathcal{R}}_u \simeq \bigoplus_{u \in \sigma \cap \ell\mathbb{N}^{s+r}} H^0(W, \mathscr{L}(u))
    \end{equation*}
    is an isomorphism of rings. This graded isomorphism fits into a commutative diagram
    \begin{equation*}
        \begin{tikzcd}
            \bigoplus_{u} \overline{\mathcal{R}}_u \ar[r, "\simeq"] \ar[d, hook] & \bigoplus_{u} H^0(W, \mathscr{L}(u)) \ar[d, hook] \\
            \bigoplus_{u} \mathscr{O}_Y(L(u)) \otimes K(Z) \ar[r, "\simeq"] & \bigoplus_{u} \mathscr{O}_{V}(\pi_u^{*}L'_u(u)) \otimes K(W)
        \end{tikzcd}
    \end{equation*}
    where the sums are taken over $u \in \sigma \cap \ell\mathbb{N}^{s+r}$. The vertical maps are injective ring homomorphisms by definition, and the lower horizontal map is an isomorphism of rings since $\pi \colon V \to Y$ and all $\pi_u \colon V \to Y'_u$ are local isomorphisms at the generic point of $W \subset V$. It follows that the upper horizontal map is also an isomorphism of rings. 
\end{proof}

\section{An example} In this section, we give an example demonstrating that Theorem \ref{finite_generation_thm} applies to more quasi-monomial valuations than those monomial lc places of special $\mathbb{Q}$-complements as in \cite{LXZ_finite_generation} and \cite{XZ_stable_deg}. 

\begin{exmp} \label{example_weakly_special}
    Let $\Bbbk$ be a field of characteristic $0$. Let $X \subset \mathbb{P}^n$ be a smooth hypersurface of degree $d$, and $E_1, E_2 \subset X$ be two smooth hyperplane sections. Assume that $d < n - 1$, so that $-(K_X + E_1 + E_2)$ is ample. The scheme-theoretic intersection $Z = E_1 \cap E_2$ is a hypersurface of degree $d$ in $\mathbb{P}^{n-2}$. Note that $(X, E_1 + E_2)$ is snc at the smooth locus of $Z$. Thus, if $Z$ is integral with the generic point $\zeta$, then we can consider quasi-monomial valuations in $\mathrm{QM}_{\zeta}^{\circ}(X, E_1 + E_2) \simeq \mathbb{R}_{> 0}^2$. 

    By inversion of adjunction, if $Z$ is klt, then $(X, E_1 + E_2)$ is dlt, hence any $v \in \mathrm{QM}^{\circ}_{\zeta}(X, E_1 + E_2)$ is a monomial lc place of special $\mathbb{Q}$-complement on $X$. If $Z$ is lc, then $(X, E_1 + E_2)$ is lc, hence Theorem \ref{main_thm_1} applies to any $v \in \mathrm{QM}^{\circ}_{\zeta}(X, E_1 + E_2)$. 
    
    Let $R = \bigoplus_{n \in \mathbb{N}} H^0(X, \mathscr{O}_X(n))$ be the homogeneous coordinate ring of $X$. We can compute the graded algebra $\mathrm{gr}_v(R)$ for $v \in \mathrm{QM}^{\circ}_{\zeta}(X, E_1 + E_2)$ as follows. Let $P = \Bbbk[x_1, \ldots, x_{n+1}]$, so that $R = P/(f)$ where $f$ is a homogeneous polynomial of degree $d$ defining $X \subset \mathbb{P}^n$. We may assume that $E_1$ and $E_2$ are defined by $x_1 = 0$ and $x_2 = 0$, respectively, hence $Z$ is defined by the image $\bar{f}$ of $f$ in $\Bbbk[x_3, \ldots, x_{n+1}]$. We can also write 
    \begin{equation*}
        f = f_0 + x_1f_1 + x_2f_2, 
    \end{equation*}
    where $f_0$ is the same polynomial as $\bar{f}$, but viewed as an element in $P$, and $f_1, f_2 \in P$. The vanishing orders along $x_1$ and $x_2$ induce filtrations $\mathcal{F}_{1}$ and $\mathcal{F}_{2}$ on $P$, respectively, which induce filtrations $\mathcal{F}_{E_1}$ and $\mathcal{F}_{E_2}$ on $R = P/(f)$. Then $\mathrm{gr}_{\mathcal{F}_1}(\mathrm{gr}_{\mathcal{F}_2}(P)) \simeq \Bbbk[x_1, \ldots, x_{n+1}]$, and the initial term of $f$ in $\mathrm{gr}_{\mathcal{F}_1}(\mathrm{gr}_{\mathcal{F}_2}(P))$ is $f_0$. Hence 
    \begin{equation*}
        \mathrm{gr}_{\mathcal{F}_{E_1}}(\mathrm{gr}_{\mathcal{F}_{E_2}}(R)) \simeq \Bbbk[x_1, \ldots, x_{n+1}]/(f_0). 
    \end{equation*}
    By Theorem \ref{finite_generation_thm}, $\mathrm{gr}_v(R) \simeq \mathrm{gr}_{\mathcal{F}_{E_1}}(\mathrm{gr}_{\mathcal{F}_{E_2}}(R)) \simeq \Bbbk[x_1, \ldots, x_{n+1}]/(f_0)$ for any $v \in \mathrm{QM}^{\circ}_{\zeta}(X, E_1 + E_2)$. Note that the $\mathbb{N}$-grading on $\mathrm{gr}_v(R)$ induced by the $\mathbb{N}$-grading on $R$ is the usual one, with $\deg(x_i) = 1$ for all $i$. Geometrically, $v$ induces a degeneration of $X$ to $X_0 = \mathop{\mathrm{Proj}}(\mathrm{gr}_v(R)) \subset \mathbb{P}^n$, which is the double projective cone over $Z \subset \mathbb{P}^{n-2}$. If $Z$ is lc but not klt, then $X_0$ is also lc but not klt (see \cite[Lem.\ 3.1]{Kollar_singularity}). Thus, by \cite[Thm.\ 4.2]{LXZ_finite_generation}, $v$ is not a monomial lc place of a special $\mathbb{Q}$-complement in this case. 

    To get a very concrete example, consider the case $d = 3$ and $n = 5$. Let $C \subset \mathbb{P}^2 = \mathop{\mathrm{Proj}}(\Bbbk[x, y, z])$ be a smooth cubic curve, defined by an equation $f(x, y, z) = 0$. For example, $f(x, y, z) = x^3 + y^3 + yz^2$. Let $X \subset \mathbb{P}^5 = \mathop{\mathrm{Proj}}(\Bbbk[u, v, w, x, y, z])$ be the cubic hypersurface defined by the equation
    \begin{equation*}
        u^3 - v^3 + (u + v)w^2 + f(x, y, z) = 0. 
    \end{equation*}
    Let $E_1$ and $E_2$ be the hyperplane sections of $X$ defined by $u = 0$ and $v = 0$, respectively. Then $E_1$ and $E_2$ are smooth, and $Z = E_1 \cap E_2 \subset \mathbb{P}^3 = \mathop{\mathrm{Proj}}(\Bbbk[w, x, y, z])$ is defined by $f(x, y, z) = 0$. That is, $Z$ is the projective cone over $C$. Thus $Z$ is lc but not klt. Now, a valuation $v \in \mathrm{QM}^{\circ}_{\zeta}(X, E_1 + E_2)$ gives 
    \begin{equation*}
        \mathrm{gr}_v(R) \simeq \Bbbk[u, v, w, x, y, z]/(f(x, y, z)), 
    \end{equation*}
    and induces a degeneration of $X$ to the triple projective cone $X_0$ over $C$. 
\end{exmp}

\printbibliography

\end{document}